\documentclass[11pt,a4paper,reqno,dvipsnames]{amsart}
\allowdisplaybreaks[4]

\usepackage{amsfonts}
\usepackage{amsmath}
\usepackage{amssymb}
\usepackage{amsthm}
\usepackage{amsxtra}
\usepackage{bbm}
\usepackage{braket}
\usepackage{comment}
\usepackage{extarrows}
\usepackage{graphicx}
\usepackage{latexsym}
\usepackage{mathrsfs}
\usepackage{yhmath}
\usepackage{mismath}
\usepackage{mathtools}

\usepackage{float}
\usepackage{booktabs}
\usepackage{verbatim}
\usepackage{xcolor}
\usepackage[normalem]{ulem}
\usepackage{caption,subcaption}

\usepackage{hyperref} 
\hypersetup{breaklinks, raiselinks}
\hypersetup{colorlinks, citecolor=blue, linkcolor=blue, urlcolor=blue}

\usepackage{bm}

\usepackage{tikz-cd}
\usepackage{tikz}
\usetikzlibrary{fit}

\usepackage[initials,lite]{amsrefs} 
\AtBeginDocument{%
    \def\MR#1{}
}

\usepackage{cleveref}
\Crefname{Lemma}{Lemma}{Lemmas}
\Crefname{Theorem}{Theorem}{Theorems}
\makeatletter
\def\cref@thmoptarg[#1]#2#3#4{%
    \ifhmode\unskip\unskip\par\fi%
    \normalfont%
    \trivlist%
    \let\thmheadnl\relax%
    \let\thm@swap\@gobble%
    \thm@notefont{\fontseries\mddefault\upshape}%
    \thm@headpunct{.}
    \thm@headsep 5\p@ plus\p@ minus\p@\relax%
    \thm@space@setup%
    #2
    \@topsep \thm@preskip               
    \@topsepadd \thm@postskip           
    \def\@tempa{#3}\ifx\@empty\@tempa%
        \def\@tempa{\@oparg{\@begintheorem{#4}{}}[]}%
    \else%
        \refstepcounter[#1]{#3}
        \@namedef{cref@#3@alias}{#1}
        \def\@tempa{\@oparg{\@begintheorem{#4}{\csname the#3\endcsname}}[]}%
    \fi%
\@tempa}%
\makeatother

\usepackage{fullpage}

\theoremstyle{plain}
\newtheorem{Theorem}{Theorem}[section]
\newtheorem{Lemma}[Theorem]{Lemma}
\newtheorem{Corollary}[Theorem]{Corollary}
\newtheorem{Proposition}[Theorem]{Proposition}

\theoremstyle{definition}

\newtheorem{Assumptions and Discussion}[Theorem]{Assumptions and Discussion}

\newtheorem{Example}[Theorem]{Example}
\newtheorem{Definition}[Theorem]{Definition}

\newtheorem{Remark}[Theorem]{Remark}

\newtheorem{Construction}[Theorem]{Construction}

\newtheorem{Observations}[Theorem]{Observations}

\theoremstyle{remark}

\newtheorem{Setting}[Theorem]{Setting}

\newtheorem*{acknowledgment*}{Acknowledgment}

\usepackage[shortlabels]{enumitem}
\SetEnumerateShortLabel{a}{\textup{(\alph*)}}
\SetEnumerateShortLabel{A}{\textup{(\Alph*)}}
\SetEnumerateShortLabel{1}{\textup{(\arabic*)}}
\SetEnumerateShortLabel{i}{\textup{(\roman*)}}
\SetEnumerateShortLabel{I}{\textup{(\Roman*)}}

\def\ceil#1{\left\lceil #1 \right\rceil}

\def\cone{\operatorname{cone}}

\def\deg{\operatorname{deg}}

\def\floor#1{\left\lfloor #1 \right\rfloor}

\def\ini{\operatorname{in}} 

\def\KK{{\mathbb K}}
\def\lcm{\operatorname{lcm}}

\def\reg{\operatorname{reg}}

\def\v{\operatorname{v}} 

\newcommand\bfm{\mathbf{m}}

\newcommand\bfP{\mathbf{P}}

\newcommand\calC{\mathcal{C}}

\newcommand\calJ{\mathcal{J}}

\newcommand\calP{\mathcal{P}}

\newcommand\frakP{\mathfrak{P}}
\newcommand\frakp{\mathfrak{p}}

\newcommand{\Ass}{\operatorname{Ass}}

\begin{document}

\title{The $\mathrm{v}$-number of generalized binomial edge ideals of some graphs}

\author{Yi-Huang Shen}
\address{School of Mathematical Sciences, University of Science and Technology of China, Hefei, Anhui, 230026, P. R.~China}
\email{yhshen@ustc.edu.cn}

\author{Guangjun Zhu$^{\ast}$}
\address{School of Mathematical Sciences, Soochow University, Suzhou, Jiangsu, 215006, P. R.~China}
\email{zhuguangjun@suda.edu.cn}

\thanks{$^{\ast}$ Corresponding author}
\thanks{2020 {\em Mathematics Subject Classification}.
Primary 13F20, 13F65; Secondary 05E40}

\thanks{Keywords: $\textup{v}$-number, generalized binomial edge ideal, closed graph}

\begin{abstract}
    Let $G$ be a finite connected simple graph, and let $\mathcal{J}_{K_m,G}$ denote its generalized binomial edge ideal. By investigating the colon ideals of $\mathcal{J}_{K_m,G}$, we derive a formula for the local $\mathrm{v}$-number of $\mathcal{J}_{K_m,G}$ with respect to the empty cut set. Furthermore, we classify graphs for which this generalized binomial edge ideal has $\mathrm{v}$-numbers $1$ or $2$. When $G$ is a connected closed graph, we compute the local $\mathrm{v}$-number of $\mathcal{J}_{K_2,G}$ by generalizing the work of Dey et al. Additionally, under the condition that $G$ is Cohen--Macaulay, we derive formulas for the $\mathrm{v}$-number of $\mathcal{J}_{K_m,G}$ and $\mathcal{J}_{K_2,G}^k$, and show that the $\mathrm{v}$-number of $\mathcal{J}_{K_2,G}^k$ is a linear function of $k$.
\end{abstract}

\maketitle

\section{Introduction}

Let $R=\KK[x_1,\dots, x_n]=\bigoplus\limits_{d\ge 0} R_d$ be the standard graded polynomial ring in $n$ variables over a field $\KK$.
If $I\subset R$ is a graded ideal, then we denote the set of all associated primes of $I$ by $\Ass(I)$. For any $\frak{p}\in \Ass(I)$,
the \emph{local $\v$-number} of $I$ at $\frak{p}$ 
is defined to be
\[
    \v_{\frakp}(I)\coloneqq \min\{d\ge 0 \mid \text{\ there exists an\ } f\in R_d \text{\ such that\ } (I : f)=\frakp\}.
\]
The \emph{$\v$-number} 
of $I$, denoted by $\v(I)$, is defined as
\[
    \v(I)\coloneqq \min\{\v_{\frakp}(I)\mid \frakp\in \Ass(I)\}.
\]

The $\v$-number, named after Wolmer Vasconcelos, was introduced in \cite{MR4011111} to study the asymptotic behavior of the minimum distance of projective Reed-Muller type codes. This invariant has since 
been the subject of extensive study from algebraic and combinatorial perspectives. 

A significant body of work has focused on the $\v$-number of monomial ideals (see \cite{arXiv:2306.14243,MR4950319,MR4942694,MR4491066}).
Squarefree monomial ideals have an intrinsic combinatorial nature and their algebraic properties often carry information on their underlying combinatorial structures, and vice versa. In \cite{MR4139109} the authors consider these ideals as edge ideals of clutters and find a combinatorial expression for their $\v$-numbers. This combinatorial description has been
exploited to classify $W_2$ graphs (see \cite[Theorem 4.2]{MR4139109}) and to study the combinatorial structure of graphs whose edge ideals have a Cohen–Macaulay second symbolic power.
In \cite{MR4756096}, Saha studied the relation between the $\v$-number and the regularity of cover ideals of graphs. He showed that $\v(J(G))\le \reg(J(G))+1$, where $J(G)$ is the cover ideal of a graph
$G$, and the equality holds if $G$ is a complete multipartite graph.
In \cite{MR4942694}, the authors express the $\v$-number of the Stanley–Reisner ideal of a simplicial complex in terms of its Alexander dual complex and prove that the $\v$-number of 
a cover ideal is just two less than the initial degree of its syzygy module.

Let \(I \subset R\) be a graded ideal. Independently, Kodiyalam
\cite{MR1621961} and Cutkosky, Herzog, and Trung \cite{MR1711319} proved that
\(\reg(I^k)\) is a linear function in \(k\) for all sufficiently large \(k\).
Motivated by these results, Conca \cite{MR4741232} established that the
function \(\v(I^k)\) is eventually linear in \(k\); more precisely, there exist
constants \(a\) and \(b\) such that \(\v(I^k) = ak + b\) for all \(k \gg 0\).
For the case where \(R\) is a polynomial ring over a field, Ficarra and Sgroi
independently obtained the same result (see \cite[Theorem
3.1]{arXiv:2306.14243}). They conjectured in 
\cite{MR4950310}
that if \(I\) has linear powers, then
\(\v(I^k) = \alpha(I)k - 1\) for all integers \(k \ge 1\), where \(\alpha(I)\)
stands for the initial degree of \(I\). 
They confirmed this conjecture for several interesting classes of graded ideal in  \cite{MR4950310,MR4932665} and subsequent papers.

In this paper, we focus on the $\v$-number of powers of generalized binomial edge ideals. First, we recall the definition of a generalized binomial edge ideal.

Let \(m\) and \(n\) be positive integers with \(m, n \ge 2\). Following standard convention, the notation \([m]\) denotes the set \(\{1, 2, \ldots, m\}\). Let \(S = \KK[\mathbf{X}] \coloneqq \KK[x_{i,j} : i \in [m], j \in [n]]\) be the polynomial ring over the field \(\KK\) in \(m \times n\) variables. 

In \cite{MR3290687}, Ene et al.~introduced the binomial edge ideal of a pair of graphs. Specifically, let $G_1$ and $G_2$ be simple graphs on vertex sets $[m]$ and $[n]$, respectively. Let $\varepsilon_1 = \{i, j\} \in E(G_1)$ and $\varepsilon_2 = \{k, l\} \in E(G_2)$ be edges with $i < j$ and $k < l$. One can then assign a $2$-minor $p_{(\varepsilon_1,\varepsilon_2)} = [i,j \mid k,l] \coloneqq x_{i,k}x_{j,l} - x_{i,l}x_{j,k}$ to the pair $(\varepsilon_1, \varepsilon_2)$. The \emph{binomial edge ideal of the pair $(G_1, G_2)$} is defined as
\[
    \calJ_{G_1,G_2} \coloneqq \left( p_{(\varepsilon_1,\varepsilon_2)} \mid \varepsilon_1 \in E(G_1), \varepsilon_2 \in E(G_2) \right)
\]
in $S$. This ideal generalizes the classical \emph{binomial edge ideals} introduced in \cite{MR2669070, MR2782571}, which are recovered when one of $G_1$ and $G_2$ is the complete graph $K_2$. 

Let $K_m$ be the complete graph on the vertex set \([m]\) and $G$ be a simple graph on the vertex set \([n]\); then $\calJ_{K_m,G}$ is the \emph{generalized binomial edge ideal} associated with $G$, first introduced by Rauh in \cite{MR3011436} for the study of conditional independence ideals.
Note that the ideal $\frakP_\emptyset\coloneqq (p_{(\varepsilon_1,\varepsilon_2)} \mid \varepsilon_1 \in E(K_m), \varepsilon_2 \in E(K_n))$ is always a minimal prime ideal of $\calJ_{K_m,G}$.

The (local) $\v$-numbers of binomial edge ideals have recently been discussed
in \cite{MR4834446,MR4869330,MR4792768,MR4984037}.  In \cite{MR4834446}, the
authors studied the properties and bounds of the $\v$-number of binomial edge
ideals. They characterized all connected graphs $G$ with $\v(\calJ_{K_2,G})=1$,
and showed that $\v_{\frakP_\emptyset}(\calJ_{K_2,G})=\textup{min-comp}(G)$,
where \(\textup{min-comp}(G)\) is the minimum completion number of the simple
graph $G$. In addition, when \(G\) is a connected non-complete graph,
Jaramillo-Velez and Seccia in \cite{MR4792768} proved that
$\v_{\frakP_\emptyset}(\calJ_{K_2,G})=\gamma_c(G)$, where \(\gamma_c(G)\) is
the connected domination number of $G$.  In \cite{MR4869330}, amongst many
beautiful results, Dey et al.~characterized all connected graphs $G$ with
$\v(\calJ_{K_2,G})= 2$, as well as computing the $\v$-number of the binomial
edge ideal of a Cohen--Macaulay closed graph. Building on the work of
\cites{MR4869330,arXiv:2507.02161}, Kumar et al.~in \cite{MR4984037} proved
that $\v(J_{C_n})=\lceil\frac{2n}{3}\rceil$ for all $n\ge 6$, where $C_n$ is a cycle with $n$ vertices. 
Nevertheless, to date, no literature has examined the $\v$-number of generalized binomial edge ideals.

The main focus of this paper is to generalize the work of Dey et al.~on the \(\v\)-number of the binomial edge ideal of a connected Cohen--Macaulay closed graph. 
This generalization is achieved in three non-trivial aspects. Specifically, we explicitly determine three key quantities: the local \(\v\)-number of the binomial edge ideal of a connected closed graph, the \(\v\)-number of the generalized binomial edge ideal of a connected Cohen--Macaulay closed graph, and the \(\v\)-number of powers of the binomial edge ideal of a connected Cohen--Macaulay closed graph. We address each of these objectives in Sections \ref{sec:nonCM_closed_graph} through \ref{sec:power}. 
In particular, the \(\v\)-number of powers of the generalized binomial edge ideal of a connected Cohen--Macaulay closed graph is a linear function of the power.

To achieve these goals, we will first review several necessary definitions and terms in the next section. Subsequently, in Section \ref{sec:gener}, we focus on the colon ideals of generalized binomial edge ideals. As a result, we derive a formula for 
$\v_{\frakP_\emptyset}(\calJ_{K_m,G})$
in terms of the minimum completion number and the connected domination number. Furthermore, we classify graphs for which the generalized binomial edge ideals have $\v$-numbers $1$ or \(2\). Influenced by the results in Section \ref{sec:gener}, it is natural to conjecture that the \(\v\)-number of the generalized binomial edge ideal \(\calJ_{K_m,G}\) is independent of the number \(m\). However, as our result in Section \ref{sec:closegraph} shows, this is not the case, and the computation of \(\v\)-numbers of generalized binomial edge ideals is undoubtedly more involved.

\section{Preliminary}

Let \(G\) be a simple graph with vertex set \(V(G)\) and edge set \(E(G)\).
In what follows, we assume that \(V(G) = [n] \coloneqq \{1, 2, \dots, n\}\) for some positive integer \(n\).
More generally, for non-negative integers \(n_1 \le n_2\), we write \([n_1,n_2] \coloneqq \{n_1, n_1+1, \dots, n_2\}\).

For any subset \(A\) of \(V(G)\), let \(G[A]\) denote the \emph{induced subgraph} of \(G\) on the vertex set \(A\); that is, for \(i, j \in A\), \(\{i, j\} \in E(G[A])\) if and only if \(\{i, j\} \in E(G)\). We denote the induced subgraph of \(G\) on \(V(G) \setminus A\) by \(G \setminus A\). For simplicity, if \(A = \{v\}\) is a singleton, we write this induced subgraph as \(G \setminus v\).

Complete graphs, path graphs, and cones are common simple graphs.

\begin{Definition} 
    \begin{enumerate}[a]
        \item A graph $G$ is called a \emph{complete graph} if there is an edge between every pair of its vertices. If $G$ has $n$ vertices, we often denote it by $K_n$.
        \item  A \emph{path graph} with $n$ vertices, denoted by $P_n$, is a graph whose vertex set can be ordered as $v_1,\ldots,v_n$ such that $E(P_n)=\{\{v_i,v_{i+1}\}\mid 1\le  i\le  n-1\}$. The \emph{length} of $P_n$ is the number of edges in $P_n$, which is $n-1$. 
        \item A graph $G$ is called a \emph{cone} if there is 
            subgraph $H$ and  a vertex $v\in V(G)\setminus V(H)$ such that $ V(G)=V(H)\cup\{v\}$ and $E(G)=E(H)\cup\{\{u,v\}\mid u\in V(H)\}$. In this case, we often denote it by $\cone(v, H)$.
    \end{enumerate}
\end{Definition}

As introduced earlier, given a simple graph $G$ with $n$ vertices and  an integer $m\ge 2$, we can consider the generalized binomial edge ideal $\calJ_{K_m,G}$ in $S=\KK[x_{1,1},\dots,x_{m,n}]$. Throughout this paper, we always adopt the lexicographic order induced by 
\begin{equation}
    x_{1,1}>x_{1,2}>\cdots >x_{1,n}>x_{2,1}>x_{2,2}>\cdots >x_{2,n}>\cdots >x_{m,1}>\cdots >x_{m,n}.
    \label{eqn:lex_order}
\end{equation}

It is well-known that $\calJ_{K_m,G}$ is a radical ideal. Its minimal prime ideals can be described explicitly as follows:
Let $c(G)$ denote the number of connected components of the graph $G$. A vertex $v\in V(G)$ is called a \emph{cut vertex} of $G$ if $c(G)< c(G\setminus v)$. For a subset $T\subseteq V(G)$, we abuse notation by letting $c(T)$ denote the number of connected components of $G\setminus T$. 
We say that a subset \(T\) of \(V(G)\) is a \emph{cut set} of \(G\) if, for every \(v\in T\), \(v\) is a cut vertex of the induced subgraph \(G\setminus (T\setminus \{v\})\).  We define \(\mathcal{C}(G)\) to be the 
collection of all cut sets of \(G\).  Note that \(\emptyset\in \mathcal{C}(G)\).

For each subset $T\subseteq [n]=V(G)$, we define the ideal 
\[
    \frakP_T(K_m, G)\coloneqq (x_{i,j}: (i,j)\in [m]\times
    T)+\calJ_{K_m,\widetilde{G_1}}+\cdots+\calJ_{K_m,\widetilde{G_{c(T)}}}
\]
in $S$, where $G_1,\ldots, G_{c(T)}$ are the connected components of $G\setminus T$, and $\widetilde{G_i}$ is the complete graph over the vertices of $G_i$ for $i=1,\dots,c(T)$. 
It is well-known that $\calJ_{K_m,G}$ is a radical ideal with $\Ass(\calJ_{K_m,G})=\{\frakP_T(K_m, G)\mid T\in \calC(G)\}$. 
For simplicity, the local $\v$-number of $\calJ_{K_m,G}$ with respect to $\frakP_T(K_m,G)$ will be denoted by $\v_T(\calJ_{K_m,G})$.

The main focus of this paper is to study the (local) $\v$-numbers associated to the generalized binomial edge ideal of a connected closed graph.
Closed graphs have many interesting characterizations and beautiful properties. For instance, we can choose the following as its definition:

\begin{Definition}
    [{\cite[Theorem 2.2]{MR2863365} and \cite[Theorem 1.3]{MR3290687}}]
    Let \(G\) be a simple graph on the vertex set \([n]\). Then the following conditions are equivalent:
    \begin{enumerate}[a]
        \item The minimal generators \(\{p_{(\varepsilon_1,\varepsilon_2)} \mid \varepsilon_1 \in E(K_m), \varepsilon_2 \in E(G)\}\) of \(\calJ_{K_m,G}\) form a quadratic Gröbner basis with respect to the lexicographic order defined in \eqref{eqn:lex_order};
        \item For all integers \(1\le i < j < k\le n\), if \(\{i, k\}\in E(G)\) then \(\{i,j\}\in E(G)\) and \(\{j, k\}\in E(G)\);
        \item All facets of the clique complex $\Delta(G)$ of $G$ are intervals of the form $[a, b] \subset [n]$.
    \end{enumerate}
    A graph \(G\) is \emph{closed} if there exists a vertex labeling for which one of the above equivalent conditions holds.
\end{Definition}

In this paper, we say that \(G\) is \emph{Cohen--Macaulay} if \(S/\calJ_{K_2,G}\) is Cohen--Macaulay.  By \cite[Theorem 3.1]{MR2863365}, a connected Cohen--Macaulay closed graph \(G\) with \(n\) vertices has a vertex labeling and integers \(1 = a_1 < a_2 < \cdots < a_t < a_{t+1} = n\) for which the maximal cliques \(F_1,\ldots,F_{t+1}\) of \(G\) satisfy \(F_i = [a_i,a_{i+1}]\) for all \(i=1,\dots,t\).

Our investigation of the $\v$-number of a connected closed graph depends heavily on the knowledge of the local $\v$-number with respect to $\frakP_{\emptyset}=\frakP_{\emptyset}(K_m,G)$. When $m=2$, there are two equivalent approaches for handling this.

For a connected graph $G$, the notion of connected domination number is widely used; see, for instance, \cite{MR4792768}. For the simplicity of the subsequent study, we introduce some variance of this notion. A \emph{reduced connected dominating set} is a subset $D$ of its vertices such that the induced subgraph $G[D]$ is connected, and every two vertices $u,v$ in $V(G)\setminus D$ can be connected with a path $u=u_0,u_1,\dots,u_s,u_{s+1}=v$ such that $u_1,\dots,u_s\in D$. The \emph{reduced connected domination number} of $G$, denoted by $\gamma_{c}^*(G)$, is the minimum cardinality of all such sets.  In this language, if $G$ is a complete graph, then the empty set $\emptyset$ is a reduced connected dominating set, and $\gamma_c^*(G)=0$. On the other hand, if $G$ is not complete, then $D$ is a reduced connected dominating set of $G$ if and only if $D$ is a connected dominating set of $G$. Whence, $\gamma_c^*(G)$ is precisely the connected domination number $\gamma_c(G)$ of $G$. One of the main results of \cite{MR4792768} shows that $\v_{\frakP_\emptyset}(\calJ_{K_2,G})$ is given by $\gamma_c^*(G)$.

Relatedly, for a vertex \(v\) of \(G\), its \emph{neighborhood} is defined as \(N_G(v) \coloneqq \{u \in V(G) \mid \{u,v\} \in E(G)\}\).
Furthermore, the \emph{completion graph} of \(G\) with respect to the vertex \(v\) is the graph \(G_v\), which is defined by
\[
V(G_v) = V(G) \quad \text{and} \quad E(G_v) = E(G) \cup \{\{u,w\} \mid u,w \in N_G(v), u \neq w\}.
\]

More generally, the \emph{completion graph} of \(G\) with respect to a subset \(V = \{v_1, \ldots, v_k\} \subseteq V(G)\) is the graph \(G_V\) defined iteratively as \(G_V = G_{v_1v_2\cdots v_k} \coloneqq (\ldots ((G_{v_1})_{v_2})\ldots)_{v_k}\). \cite[Proposition 3.2]{MR4834446} shows that the definition of \(G_V\) is well-defined. A subset \(W \subseteq V(G)\) is called a \emph{completion set} if \(G_W\) is a disjoint union of complete graphs. A \emph{minimal completion set} is a completion set \(W\) such that \(G_U\) is not a disjoint union of complete graphs for any proper subset \(U\) of \(W\). The \emph{minimum completion number}, denoted \(\textup{min-comp}(G)\), is the minimum cardinality of all minimal completion sets of \(G\). It is shown in \cite{MR4834446} that \(\v_{\mathfrak{P}_\emptyset}(\calJ_{K_2,G})\) is also equal to \(\textup{min-comp}(G)\).

\section{$\v$-number of generalized binomial edge ideals}
\label{sec:gener}

As is evident from the work in \cites{MR4834446}, colon ideals of elements associated to vertices are fundamental to the study of the \(\v\)-number of binomial edge ideals. Therefore, we start by studying those colon ideals for generalized binomial edge ideals.

\begin{Lemma}
    \label{thm:colon_x}
    Let \(G\) be a simple graph on the vertex set \([n]\). Then for any \(i \in [m]\) and \(j \in [n]\), we have \((\calJ_{K_m,G} : x_{i,j}) = \calJ_{K_m,G_j}\), where \(G_j\) is the completion graph of \(G\) with respect to the vertex \(j\).
\end{Lemma}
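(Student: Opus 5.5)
Since $\calJ_{K_m,G}$ is radical with $\Ass(\calJ_{K_m,G})=\{\frakP_T(K_m,G)\mid T\in\calC(G)\}$, the colon is an intersection of primes:
\[
(\calJ_{K_m,G}:x_{i,j})=\bigcap_{T\in\calC(G),\ x_{i,j}\notin \frakP_T(K_m,G)}\frakP_T(K_m,G)=\bigcap_{T\in\calC(G),\ j\notin T}\frakP_T(K_m,G).
\]
This holds because $x_{i,j}\in\frakP_T$ iff $j\in T$. Note that $\frakP_T$ does not contain variables $x_{i,j}$ with $j\notin T$, since the other generators are homogeneous binomials that are prime with no variable factors. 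So the plan is to compare this intersection with $\calJ_{K_m,G_j}=\bigcap_{T'\in\calC(G_j)}\frakP_{T'}(K_m,G_j)$.

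\textbf{Easy inclusion.} I would first show $\calJ_{K_m,G_j}\subseteq(\calJ_{K_m,G}:x_{i,j})$. Since $E(G)\subseteq E(G_j)$, it suffices to check $x_{i,j}p\in\calJ_{K_m,G}$ for every new edge $\{u,w\}$ with $u,w\in N_G(j)$ and every $2$-minor $p=[k,l\mid u,w]$. This follows from the standard three-column Plücker-type identity for a $2\times 3$ matrix in columns $u,j,w$ with rows $k,l$:
\[
x_{k,j}[k,l\mid u,w]\in([k,l\mid u,j],[k,l\mid j,w]),\qquad x_{l,j}[k,l\mid u,w]\in([k,l\mid u,j],[k,l\mid j,w]),
\]
both of which lie in $\calJ_{K_m,G}$. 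This takes care of $x_{i,j}$ when $i\in\{k,l\}$. For $i\notin\{k,l\}$ I would use the $3\times3$ determinant on rows $i,k,l$ and columns $u,j,w$. Its expansion along row $i$ expresses $x_{i,j}[k,l\mid u,w]$ as a combination of $x_{i,u}[k,l\mid j,w]$ and $x_{i,w}[k,l\mid u,j]$, which lie in $\calJ_{K_m,G}$, plus the determinant itself. Expanding the determinant along column $j$ instead writes it as a combination of minors $[\cdot,\cdot\mid u,w]$, which is circular. So I would instead expand it along column $u$, whose cofactors are $2$-minors in columns $j,w$, hence in $\calJ_{K_m,G}$. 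The determinant therefore lies in $\calJ_{K_m,G}$, and so $x_{i,j}[k,l\mid u,w]\in\calJ_{K_m,G}$.

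\textbf{Reverse inclusion.} For $(\calJ_{K_m,G}:x_{i,j})\subseteq\calJ_{K_m,G_j}$, I would show that each minimal prime $\frakP_{T'}(K_m,G_j)$ with $T'\in\calC(G_j)$ contains some $\frakP_T(K_m,G)$ with $j\notin T$. First, $j\notin T'$: since $N_{G_j}(j)$ is a clique in $G_j$, $j$ is never a cut vertex of any induced subgraph $G_j\setminus(T'\setminus\{j\})$, because its remaining neighbours form a clique. Take $T=T'$. The components of $G\setminus T$ refine those of $G_j\setminus T$. Indeed, any added edge $\{u,w\}$ with $u,w\notin T$ joins vertices both adjacent to $j\notin T$, so $u,w$ lie in the same component of $G\setminus T$. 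Hence the components coincide, and $\frakP_T(K_m,G)=\frakP_T(K_m,G_j)$. This requires only that $\frakP_T(K_m,G)$ contains $\calJ_{K_m,G}$ with $j\notin T$, not that $T\in\calC(G)$. Then
\[
(\calJ_{K_m,G}:x_{i,j})=\bigcap_{T\subseteq[n],\ j\notin T}\frakP_T(K_m,G),
\]
where the intersection may be taken over all such $T$ without changing it, since non-minimal ones are redundant. I expect the main obstacle to be justifying this last formula: that every $\frakP_T$ with $j\notin T$ (not only cut sets) contains the colon. This follows since each such $\frakP_T$ is prime, contains $\calJ_{K_m,G}$, and does not contain $x_{i,j}$. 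The reverse inclusion then follows.
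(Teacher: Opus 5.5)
Your proof is correct and essentially the paper's. Your two Laplace expansions of the $3\times 3$ determinant combine into exactly the paper's three-term identity (which needs no case split on $i\in\{k,l\}$), and your reverse inclusion is the same prime-by-prime argument, with $j\notin T'$ (simpliciality of $j$ in $G_j$) proved directly rather than cited from Bolognini--Macchia--Strazzanti; the identification $\frakP_{T'}(K_m,G)=\frakP_{T'}(K_m,G_j)$ is a harmless detour, since $\frakP_{T'}(K_m,G_j)\supseteq\calJ_{K_m,G_j}\supseteq\calJ_{K_m,G}$ is already a prime not containing $x_{i,j}$.
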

\begin{proof}
    Firstly, we show that $(\calJ_{K_m,G}: x_{i,j}) \supseteq \calJ_{K_m, G_j}$. Since
    \[
        \calJ_{K_m,G_j}=\calJ_{K_m,G}+(x_{k,p}x_{l,q}-x_{k,q}x_{l,p}\mid \text{$p,q\in N_G(j)$ and  $1\le k<l\le m$}),
    \]
    it suffices to show that $x_{i,j}(x_{k,p}x_{l,q}-x_{k,q}x_{l,p})\in \calJ_{K_m,G}$ for any $p,q\in N_G(j)$ and  $1\le k<l\le m$. But this can be checked straightforwardly. Observe that
    \begin{align*}
        x_{i,j}(x_{k,p}x_{l,q}-x_{k,q}x_{l,p})&= x_{k,p}(x_{i,j}x_{l,q}-x_{i,q}x_{l,j})+x_{i,q}(x_{k,p}x_{l,j}-x_{k,j}x_{l,p})\\
        & \qquad +x_{l,p}(x_{i,q}x_{k,j}-x_{i,j}x_{k,q}).
    \end{align*}
    Since $\{p,j\},\{q,j\}\in E(G)$, we have 
    \[
        x_{i,j}x_{l,q}-x_{i,q}x_{l,j}, x_{k,p}x_{l,j}-x_{k,j}x_{l,p}, x_{i,q}x_{k,j}-x_{i,j}x_{k,q}\in \calJ_{K_m,G}.
    \]
    Hence, $x_{i,j}(x_{k,p}x_{l,q}-x_{k,q}x_{l,p})\in \calJ_{K_m,G}$, as expected.

    It remains to show that $(\calJ_{K_m,G}: x_{i,j}) \subseteq \calJ_{K_m, G_j}$. For this purpose, let us take arbitrary $f\in (\calJ_{K_m,G}:x_{i,j})$. Whence,
    \[
        x_{i,j}f\in \calJ_{K_m,G}\subseteq \calJ_{K_m,G_j}\subseteq \frakP_T(K_m,G_j),
    \]
    for every $T\in \calC(G_j)$. By \cite[Lemma 4.5 (1)]{MR4423525}, 
    this implies that $T\in \calC(G)$ and $j\notin T$. Since \(x_{i,j}\notin \frakP_T(K_m,G_j)\), we have \(f\in \frakP_T(K_m,G_j)\) for every \(T\in \calC(G_j)\). In other words, \(f\in \calJ_{K_m,G_j}\). From this, we conclude that $(\calJ_{K_m,G}: x_{i,j}) \subseteq \calJ_{K_m, G_j}$.
\end{proof}

\begin{Corollary}
    \label{cor:colon}
    Let $G$ be a simple graph on the vertex set $[n]$. Suppose that $\varepsilon_1\in E(K_m)$ and $\varepsilon_2=\{k,l\}\in E(\widetilde{G})\setminus E(G)$, such that $P: k, k_1,\ldots, k_s, l$ is a path from \(k\) to \(l\) in \(G\). Then
    \[
        x_{i_1,k_1}x_{i_2,k_2}\cdots x_{i_s,k_s} p_{(\varepsilon_1,\varepsilon_2)} \in \calJ_{K_m,G},
    \]
    where $i_1,\ldots,i_s\in [m]$.
\end{Corollary}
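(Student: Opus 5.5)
Since each variable $x_{i_t,k_t}$ lies in $S$ and ideals are closed under multiplication, the statement is monotone in the factors. We argue by induction on the length $s$ of the interior of the path $P$, repeatedly applying Lemma~\ref{thm:colon_x}. The hypothesis $\{k,l\}\in E(\widetilde{G})\setminus E(G)$ forces $s\ge 1$, but the argument also covers $s=0$: there $\{k,l\}\in E(G)$ and $p_{(\varepsilon_1,\varepsilon_2)}\in\calJ_{K_m,G}$ by definition. We may assume $P$ is an induced path, or at least that $k,k_1,\dots,k_s,l$ are distinct. If $P$ is not induced, shortening it only removes factors, and the monotonicity above restores them.

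The key observation is that, by Lemma~\ref{thm:colon_x}, for the first interior vertex $k_1$ we have
\[
\calJ_{K_m,G_{k_1}} = (\calJ_{K_m,G}:x_{i_1,k_1}).
\]
In the completion graph $G_{k_1}$ the neighbours $k$ and $k_2$ of $k_1$ become adjacent, where $k_2:=l$ if $s=1$. Hence $k,k_2,\dots,k_s,l$ is a path in $G_{k_1}$ with $s-1$ interior vertices. By the induction hypothesis applied to the graph $G_{k_1}$, with $\varepsilon_2=\{k,l\}$ (either already an edge of $G_{k_1}$, or a non-edge joined by this shorter path), we obtain
\[
x_{i_2,k_2}\cdots x_{i_s,k_s}\,p_{(\varepsilon_1,\varepsilon_2)}\in \calJ_{K_m,G_{k_1}}=(\calJ_{K_m,G}:x_{i_1,k_1}).
\]
Multiplying by $x_{i_1,k_1}$ gives the claim.

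To make the induction clean, we formulate the hypothesis for an arbitrary simple graph $H$ and an arbitrary walk from $k$ to $l$ in $H$ with $s$ interior vertices; the base case is $s=0$, that is, $\{k,l\}\in E(H)$. The hypothesis $\{k,l\}\notin E(G)$ is not needed.

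The only step requiring care is checking that the completion $G_{k_1}$ really contains the edge $\{k,k_2\}$. This holds because both $k$ and $k_2$ lie in $N_G(k_1)$ and are distinct vertices, which is why we reduce to a path with distinct vertices. The well-definedness of completions is not needed, since we complete at only one vertex per step and apply the lemma afresh to the new graph.
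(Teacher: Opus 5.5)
Your proof is correct and is essentially the paper's argument: the paper applies \Cref{thm:colon_x} repeatedly to get $(\calJ_{K_m,G}:x_{i_1,k_1}\cdots x_{i_s,k_s})=\calJ_{K_m,G'}$ for the iterated completion $G'=(\cdots(G_{k_1})_{k_2}\cdots)_{k_s}$, which contains the edge $\{k,l\}$, and your induction on $s$ is this same iteration carried out one vertex at a time. One minor point: the ``arbitrary walk'' formulation does not go through verbatim, since the completion step adds no edge $\{k,k_2\}$ when, say, $k_2=k$; the reduction to a path with distinct vertices that you already describe fixes this.
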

\begin{proof}
    Let \(G'=(\cdots(G_{k_1})_{k_2}\cdots)_{k_s}\). It follows from \Cref{thm:colon_x} that
    \[
        (\calJ_{K_m,G}:x_{i_1,k_1}x_{i_2,k_2}\cdots x_{i_s,k_s})=\calJ_{K_m,G'}.
    \]
    Since \(\varepsilon_2\in E(G')\), the expected containment holds.
\end{proof}

\begin{Theorem}
    \label{cor:v_empty}
    Let \(G\) be a  simple graph on the set \([n]\) and \(m\ge 2\). Then
    \[
        \v_{\emptyset}(\calJ_{K_m,G})=
        \v_{\emptyset}(\calJ_{K_2,G})=\textup{min-comp}(G)=\gamma_c^{*}(G),
    \]
    where \(\textup{min-comp}(G)\) is the minimum completion number of $G$ and \(\gamma_c^*(G)\) is the reduced connected domination number of $G$.
\end{Theorem}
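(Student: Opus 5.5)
Since the equalities $\v_{\emptyset}(\calJ_{K_2,G})=\textup{min-comp}(G)=\gamma_c^*(G)$ are already known from \cite{MR4834446} and \cite{MR4792768}, the content to prove is $\v_{\emptyset}(\calJ_{K_m,G})=\gamma_c^*(G)$ for every $m\ge 2$. If $G$ has several components, then $\frakP_\emptyset$ splits as a sum over components. I would first reduce to the connected case, where $\frakP_\emptyset=\calJ_{K_m,\widetilde G}$ is the ideal of all $2$-minors of the generic $m\times n$ matrix. I then prove the upper bound $\v_\emptyset\le\gamma_c^*(G)$ and the lower bound $\v_\emptyset\ge\textup{min-comp}(G)$ separately, and close the gap using $\textup{min-comp}(G)=\gamma_c^*(G)$.

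\emph{Upper bound.} Take a reduced connected dominating set $D=\{d_1,\dots,d_s\}$ with $s=\gamma_c^*(G)$, and set $f=x_{1,d_1}\cdots x_{1,d_s}$. By \Cref{thm:colon_x}, applied iteratively as in the proof of \Cref{cor:colon}, we get $(\calJ_{K_m,G}:f)=\calJ_{K_m,G_D}$.

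It then suffices to check that $G_D$ is complete, i.e.\ $G_D=\widetilde G$. Since $G[D]$ is connected, each completion step merges neighbourhoods, so after completing at all of $D$ the set $D\cup N(D)$ becomes a clique. This is essentially the argument used for $m=2$ and does not depend on $m$. Vertices outside $D$ are joined through paths with interior in $D$; in particular, if $D\ne\emptyset$ every outside vertex is adjacent to $D$, and if $D=\emptyset$ then $G$ is already complete. Hence $(\calJ_{K_m,G}:f)=\frakP_\emptyset$, and $\v_\emptyset\le s$.

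\emph{Lower bound.} This is the main obstacle. Suppose $f$ is homogeneous of degree $d$ with $(\calJ_{K_m,G}:f)=\frakP_\emptyset$. I would try to reduce to $m=2$ by a specialization or projection that sends $\calJ_{K_m,G}$ onto $\calJ_{K_2,G}$ while keeping the colon equal to the corresponding $\frakP_\emptyset$.

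A first attempt is to work with the lex initial ideal, or to use the $\mathbb{Z}^n$-multigrading by columns. The ideals $\calJ_{K_m,G}$ and $\frakP_\emptyset$ are homogeneous in this grading and also $GL_m$-stable. So one may assume $f$ is a multihomogeneous component, and use the $GL_m$-action on rows to argue that $f$ can be chosen so that its nonzero image modulo $\frakP_\emptyset$ survives in the rows $1,2$ after specializing $x_{i,j}\mapsto 0$ for $i\ge 3$.

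The specialization sends $\calJ_{K_m,G}$ to $\calJ_{K_2,G}$ and $\frakP_\emptyset(K_m,G)$ to $\frakP_\emptyset(K_2,G)$. The key point to verify is that the colon is preserved: we need $\bar f\notin\calJ_{K_2,G}$, and every $g$ with $g\bar f\in \calJ_{K_2,G}$ must lie in $\frakP_\emptyset(K_2,G)$. For the latter I would use primality. Since $\frakP_\emptyset(K_2,G)$ is prime, it suffices that $\bar f\notin\frakP_\emptyset(K_2,G)$. Indeed, $\calJ_{K_2,G}:\bar f$ is then contained in the intersection of the minimal primes not containing $\bar f$. To force it equal to $\frakP_\emptyset$, we also need $\bar f$ to lie in every $\frakP_T(K_2,G)$ with $T\ne\emptyset$.

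So the real check is this: $f\in\frakP_T(K_m,G)$ for all $T\ne\emptyset$ and $f\notin\frakP_\emptyset$. These conditions follow from $(\calJ:f)=\frakP_\emptyset$ and radicality, and I want them to descend to $\bar f$. The first condition descends because specialization maps $\frakP_T(K_m,G)$ into $\frakP_T(K_2,G)$. The second requires choosing a generic row change of coordinates before specializing, which is possible since $f\notin\frakP_\emptyset$ is an open condition. This gives $\v_\emptyset(\calJ_{K_2,G})\le d$, hence the lower bound $d\ge\textup{min-comp}(G)$.
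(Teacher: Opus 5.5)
Your argument is correct in substance, but your lower bound takes a genuinely different route from the paper. The paper never compares $m$ rows with two rows. It extends \cite[Proposition 3.1]{MR4834446} (this is \Cref{thm:colon_x}) and \cite[Lemma 3.5]{MR4834446} to $\calJ_{K_m,G}$, then reruns the proof of \cite[Theorem 3.6]{MR4834446}. So $\v_{\emptyset}(\calJ_{K_m,G})=\textup{min-comp}(G)$ comes directly from completion-set combinatorics, and the $m=2$ results are quoted only to identify this number with $\gamma_c^*(G)$. Your upper bound uses the same mechanism: iterate \Cref{thm:colon_x} over a vertex set whose completion is complete, with a connected dominating set in place of a completion set.

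Your lower bound instead proves $\v_{\emptyset}(\calJ_{K_m,G})\ge\v_{\emptyset}(\calJ_{K_2,G})$ with $m=2$ as a black box. The primes $\frakP_T(K_m,G)$ are stable under row operations and are sent into $\frakP_T(K_2,G)$ by killing rows $3,\dots,m$. The set $\calC(G)$ does not depend on $m$. By radicality, $(\calJ_{K_m,G}:f)=\frakP_\emptyset$ just means $f\notin\frakP_\emptyset$ and $f\in\frakP_T$ for all $\emptyset\ne T\in\calC(G)$. This avoids redoing any combinatorics, and it shows why $m$ is irrelevant here: the rank-one locus $V(\frakP_\emptyset)$ is swept out by $\mathrm{GL}_m$-translates of the two-row rank-one locus.

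It also shows where such a reduction must fail. When the relevant prime has three or more blocks, the translates only reach matrices whose column space has dimension at most $2$. This happens for a cut set with $c(T)\ge 3$, or for $T=\emptyset$ when $G$ has at least three components, so your reduction to connected $G$ is essential rather than cosmetic. Indeed, the determinants $f_p$ of \Cref{construct:f_P_non_CM} with $\ell_p\ge 3$ vanish under every two-row specialization, which is why $\v(\calJ_{K_m,G})$ depends on $m$ in Section~\ref{sec:closegraph}. The paper's route, in exchange, stays parallel to the literature and needs no genericity argument.

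Two steps need tightening.

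First, saying that the specialized form avoiding $\frakP_\emptyset(K_2,G)$ is an open condition on $g$ does not show that the condition is ever met. Argue instead as follows. The statement $f\notin\frakP_\emptyset$ means $f(ab^{T})\ne 0$ for some vectors $a,b$; if $\KK$ is finite, enlarge it first, which changes neither the primes nor the $m=2$ results. Choose $g\in\mathrm{GL}_m$ with $ge_1=a$ and substitute $X\mapsto gX$ in $f$. The specialization of the result does not vanish at the two-row matrix with rows $b^{T}$ and $0$. With this, the multigraded reduction becomes unnecessary.

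Second, in the upper bound, domination does not follow from the literal definition of a reduced connected dominating set, because two outside vertices may be joined by an edge with no interior vertex. In the $4$-cycle, a single vertex satisfies that definition but is not dominating, and completing at it does not give a complete graph. For non-complete $G$, take $D$ to be a minimum connected dominating set, which is the quantity the cited $m=2$ results actually compute.
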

\begin{proof}
    It follows from \cite[Theorem 3.2]{MR4792768} that \(\v_{\emptyset}(\calJ_{K_2,G})=\gamma_c^{*}(G)\). At the same time, it follows from \cite[Theorem 3.6]{MR4834446} that \(\v_{\emptyset}(\calJ_{K_2,G})=\textup{min-comp}(G)\). Notice that the current \Cref{thm:colon_x} is a generalization of \cite[Proposition 3.1]{MR4834446} to the generalized binomial edge ideal case. One can also easily generalize \cite[Lemma 3.5]{MR4834446} to the generalized binomial edge ideal case using exactly the same argument. Therefore, we can do the same trick as in \cite[Theorem 3.6]{MR4834446} to get that \(\v_{\emptyset}(\calJ_{K_m,G})=\textup{min-comp}(G)\).
\end{proof}

\begin{Theorem}
    Let $G$ be a simple graph on the vertex set $[n]$. Then $\v(\calJ_{K_m,G})=1$ if and only if $G=\cone(v,H)$ is a cone graph for some non-complete graph $H$.
\end{Theorem}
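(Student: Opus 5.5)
Since $\calJ_{K_m,G}$ is generated in degree $2$ and is radical with all associated primes of the form $\frakP_T(K_m,G)$, we have $\v(\calJ_{K_m,G})\ge 1$ as soon as $\calJ_{K_m,G}\neq 0$. Indeed, if $f$ is a nonzero constant, then $(\calJ_{K_m,G}:f)=\calJ_{K_m,G}$, which is prime only if $G$ is a disjoint union of complete graphs with at most one nontrivial component. The plan is to show that $\v(\calJ_{K_m,G})=1$ happens exactly when some linear form $f$ satisfies $(\calJ_{K_m,G}:f)=\frakP_T(K_m,G)$ for some $T\in\calC(G)$, and to analyse which $T$ and $f$ can occur.

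For the ``if'' direction, suppose $G=\cone(v,H)$ with $H$ non-complete and $v\in[n]$. The completion graph $G_v$ is the complete graph on $[n]$, since $N_G(v)=V(H)$. By \Cref{thm:colon_x}, $(\calJ_{K_m,G}:x_{1,v})=\calJ_{K_m,K_n}=\frakP_\emptyset(K_m,G)$. This ideal is an associated prime because $G$ is connected. Hence $\v(\calJ_{K_m,G})\le 1$. Moreover $\v_T\neq 0$ for every $T$: since $H$ is non-complete, $G$ is non-complete, so $\calJ_{K_m,G}\neq\frakP_\emptyset$. Also $\calJ_{K_m,G}$ is not itself prime, because it has at least two minimal primes. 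Indeed, $\{v\}$ is a cut set whenever $H$ is disconnected, and otherwise a suitable cut set of $G$ containing $v$ exists since $G$ is not complete. The cleaner argument for $\v\ne 0$ is that $\calJ_{K_m,G}$ prime would force $\calJ_{K_m,G}=\frakP_\emptyset$, whose generators include minors on non-edges; this contradicts the fact that the minimal generators of $\calJ_{K_m,G}$ only involve edges.

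For the ``only if'' direction, assume $(\calJ_{K_m,G}:f)=\frakP_T$ with $\deg f=1$. The first step is to show that $T=\emptyset$. If $T\neq\emptyset$, then $f\notin\frakP_T$ but $f\cdot x_{i,t}\in\calJ_{K_m,G}$ for $t\in T$. Since $\calJ_{K_m,G}$ is generated by quadrics that are binomials of the shape $x_{a,b}x_{c,d}-x_{a,d}x_{c,b}$, it contains no product of two linear forms of this kind unless the product is zero modulo the relevant generators. Comparing the degree-2 part, $\calJ_{K_m,G}$ contains no nonzero element of the form $\ell\cdot x_{i,t}$ with $\ell$ linear: each element of the degree-2 component is a combination of $2$-minors, and none of these is divisible by a variable. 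This contradiction gives $T=\emptyset$. So the condition is $\v_\emptyset(\calJ_{K_m,G})=1$, and \Cref{cor:v_empty} gives $\textup{min-comp}(G)=\gamma_c^*(G)=1$. The domination number $\gamma_c^*(G)=1$ means there is a single vertex $v$ such that $\{v\}$ is a reduced connected dominating set and $G$ is not complete (otherwise $\gamma_c^*=0$). Unwinding the definition, every two vertices outside $v$ are joined by a path whose interior lies in $\{v\}$, so they are either adjacent or both adjacent to $v$. It remains to show that every $u\neq v$ is adjacent to $v$. If some $u$ were not adjacent to $v$, then $u$ would have to be adjacent to every other vertex $w\neq v$; taking $w$ to be a neighbour of $v$ (which must exist), one checks that no pair is left without a path, but then $G$ might still fail to be a cone. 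I expect this to be the main obstacle. To handle it, I would instead use the connected domination interpretation directly: for non-complete $G$, $\gamma_c(G)=1$ exactly means some vertex $v$ dominates all others, i.e. $G=\cone(v,H)$. The graph $H$ is non-complete because $G$ is. The disconnected case should also be checked separately: if $G$ is disconnected, then $\frakP_\emptyset$ involves several components and $\gamma_c^*$ would have to be read as the paper's definition, namely that a reduced dominating set must connect all outside vertices. This forces connectivity through $v$, which again yields a cone.
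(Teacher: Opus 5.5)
Your argument is sound for connected $G$, which is the setting the paper tacitly uses (for instance, it identifies $\frakP_\emptyset(K_m,G)$ with $\calJ_{K_m,K_n}$). Your ``if'' direction and your reduction to $T=\emptyset$ match the paper's. The paper phrases the reduction as follows: $x_{i,j}f\in\calJ_{K_m,G}\subseteq\frakP_\emptyset$, and $\frakP_\emptyset$ is prime and contains no linear forms. The ``only if'' direction then genuinely diverges. You feed $\v_\emptyset(\calJ_{K_m,G})=1$ into \Cref{cor:v_empty} and read a dominating vertex off $\gamma_c(G)=1$. The paper never invokes that formula. From $\frakP_\emptyset=(\calJ_{K_m,G}:f)=\bigcap_{T\in\calC(G)}(\frakP_T(K_m,G):f)$ and the incomparability of the minimal primes, it gets $f\in\frakP_T(K_m,G)$ for every nonempty $T\in\calC(G)$. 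Since $f$ is linear, every vertex $v$ with $x_{i,v}$ occurring in $f$ then lies in every nonempty cut set. Such a $v$ is adjacent to all other vertices: a non-neighbour $u$ would be separated from $v$ by a minimal $u$--$v$ separator, which is a nonempty cut set avoiding $v$. The paper's route is elementary, uses only the prime decomposition, and exhibits the cone vertex as any vertex in the support of the witness $f$. Yours is shorter, but it depends on the full formula $\v_\emptyset=\gamma_c^*$, whose case $m\ge 3$ the paper only sketches.

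A few things need fixing.
\begin{enumerate}
\item Drop your final sentence on disconnected graphs, because the statement is false there. For $G=\cone(v,H)\sqcup K_1$ one still has $(\calJ_{K_m,G}:x_{1,v})=\frakP_\emptyset(K_m,G)$ with $\calJ_{K_m,G}$ not prime, so $\v(\calJ_{K_m,G})=1$, yet $G$ is not a cone. Connectedness must be assumed.
\item Your opening remarks are inaccurate. The bound $\v\ge 1$ fails for complete graphs, and $\calJ_{K_m,G}$ is prime exactly when every component of $G$ is complete. This is harmless, since your ``cleaner argument'' correctly gives $\v\ne 0$ for the cone.
\item In the $T=\emptyset$ step, ``no $2$-minor is divisible by a variable'' does not by itself pass to linear combinations. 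Add that distinct $2$-minors have disjoint monomial supports and that each consists of two coprime monomials. Alternatively, simply use the primality of $\frakP_\emptyset$.
\item Your suspicion about the literal definition of a reduced connected dominating set is justified: read literally, a single vertex of the $4$-cycle satisfies it. So that step must rest on $\gamma_c^*=\gamma_c$ for connected non-complete graphs. It is cleaner to use $\textup{min-comp}(G)=1$. Then $G_v$ is a disjoint union of cliques, hence complete because it is connected. Completing at $v$ adds no edge through $v$, so $v$ is adjacent to every other vertex.
\end{enumerate}
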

\begin{proof}
    \begin{enumerate}[a]
        \item Suppose that \(\v(\calJ_{K_m,G})=1\). Then $G$ is not a complete graph and it follows from the definition that there is a homogeneous polynomial \(f\) of degree one such that \((\calJ_{K_m,G}:f)=P_{T_0}(K_m,G)\) for some \(T_0\in \calC(G)\). We claim that \(T_0=\emptyset\). Suppose for contradiction that this is not true. Then there exists \(x_{i,j}\in P_{T_0}(K_m,G)\) for some \(i\in [m]\) and \(j\in T_0\). It follows that \(x_{i,j}f\in \calJ_{K_m,G}\subseteq P_{\emptyset}(K_m,G)\). Since \(P_{\emptyset}(K_m,G)\) is a prime ideal, we have either \(x_{i,j}\) or \(f\) belonging to \(P_{\emptyset}(K_m,G)\). But \(\deg(x_{i,j})=\deg(f)=1\) while \(P_{\emptyset}(K_m,G)\) is generated by quadratic polynomials, this is impossible. Therefore, our claim holds and \(T_0=\emptyset\). Furthermore, for the given \(f\), we have \((\calJ_{K_m,G}:f)=P_{\emptyset}(K_m,G)=\calJ_{K_m,K_n}\).

            Since \(\calJ_{K_m,G}=\bigcap_{T\in\calC(G)}\frakP_T(K_m,G)\) is a minimal  prime (primary) decomposition of the  radical ideal \(\calJ_{K_m,G}\), \(P_{\emptyset}(K_m,G)=(\calJ_{K_m,G}:f)=\bigcap_{T\in\calC(G)}(\frakP_T(K_m,G):f)\), which implies that \(f\in \frakP_T(K_m,G)\) for every non-empty \(T\) in \(\calC(G)\). Since \(\deg(f)=1\), this implies that \(f\in (x_{i,j}\mid i\in [m],j\in T)\) for such \(T\). Consequently, \(f\in \bigcap_{\emptyset\ne T\in \calC(G)}\sum_{i\in [m],j\in T}\KK x_{i,j}\). Let \(x_{i,v}\) be a variable appearing in the expression of \(f\). This implies that \(v\in T\) for every non-empty \(T\) in \(\calC(G)\). It follows that \(G=\cone(v,H)\) for some non-complete graph \(H\); see also the proof of \cite[Theorem 3.20]{MR4834446}.

        \item Conversely, suppose that \(G=\cone(v,H)\) for some non-complete graph \(H\). It follows from \Cref{thm:colon_x} that \((\calJ_{K_m,G}:x_{i,v})=\calJ_{K_m,G_{v}}\).
            Since  \(G=\cone(v,H)\),  $G_{v}$ is complete and $\calJ_{K_{m},G_v}=P_{\emptyset}(K_m,G)$. 
            Therefore, \(\v(\calJ_{K_{m},G})\le 1\). Since \(G\) is not a complete graph, \(\calJ_{K_{m},G}\) is not a prime ideal and \(\v(\calJ_{K_{m},G})\ne 0\). Therefore, \(\v(\calJ_{K_{m},G})= 1\).
            \qedhere
    \end{enumerate} 
\end{proof}

Similar to the arguments in \cite[Theorem 4.1]{MR4869330}, we obtain the following theorem, whose proof we omit.

\begin{Theorem}
    Let $G$ be a connected graph. Then  \(\v(\calJ_{K_{m},G})=2\) if and only if $G$ is  not a cone graph and satisfies one of the following two conditions:
    \begin{enumerate}[1]
        \item  The reduced connected domination number of $G$,  $\gamma_c^{*}(G)=2$;
        \item   There exist vertices $u,v$ such  that $\{u,v\}\notin E(G)$, and  $N_G(u)\cap N_G(v)$ is a non-empty cut set of $G$ that disconnects $u$ and $v$. If $G_1, G_2\in  \calC(G\setminus (N_G(u)\cap N_G(v))$
            are components containing $u$ and $v$ respectively, then $G_1=\cone(u, G_1\setminus\{ u\})$ and 
            $G_2=\cone(v, G_2\setminus\{v\})$. All other connected components of $G\setminus (N_G(u)\cap N_G(v))$ are complete graphs.
    \end{enumerate} 
\end{Theorem}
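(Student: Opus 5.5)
We will mirror the argument of \cite[Theorem 4.1]{MR4869330}, replacing every use of \cite[Proposition 3.1]{MR4834446} by \Cref{thm:colon_x}, \Cref{cor:colon} and \Cref{cor:v_empty}. Since $G$ is not a cone, the previous theorem gives $\v(\calJ_{K_m,G})\ge 2$. So we must show that $\v(\calJ_{K_m,G})\le 2$ holds exactly when (1) or (2) holds. Since $\v(\calJ_{K_m,G})=\min_{T\in\calC(G)}\v_T(\calJ_{K_m,G})$, we split according to whether the minimum is attained at $T=\emptyset$ or at some nonempty $T$.

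\textbf{Sufficiency.} If (1) holds, \Cref{cor:v_empty} gives $\v_\emptyset(\calJ_{K_m,G})=\gamma_c^*(G)=2$, and we are done. Suppose (2) holds, and put $T=N_G(u)\cap N_G(v)$. We take $f=x_{1,u}x_{1,v}$ (or $x_{1,u}x_{2,v}$ if needed) and compute $(\calJ_{K_m,G}:f)$ by applying \Cref{thm:colon_x} twice; this gives $\calJ_{K_m,G_{uv}}$. Because $G_1$ is a cone over $u$ and $G_2$ is a cone over $v$, the graph $G_{uv}$ consists of the following pieces: the complete graph on $V(G_1)\cup T$, the complete graph on $V(G_2)\cup T$, and the other components of $G\setminus T$, which are already complete. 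We then show that $(\calJ_{K_m,G_{uv}}:f)$, or the colon by a suitably modified degree-$2$ element, equals $\frakP_T(K_m,G)$. The key observations are these. The variables $x_{i,t}$ with $t\in T$ lie in the colon, because $x_{i,t}x_{1,u}x_{1,v}$ is killed modulo the two cliques; this is a Plücker-type three-term identity like the one in the proof of \Cref{thm:colon_x}. The remaining minimal primes of $\calJ_{K_m,G_{uv}}$ containing $f$ can be controlled by the cut-set structure. Concretely, we intersect the primary decomposition $\bigcap_{T'}\frakP_{T'}(K_m,G_{uv})$ with the colon by $f$. Every $T'\ne T$ either contains $u$ or $v$, in which case $f$ lies in it and the component drops out, or it is strictly contained in $\frakP_T$-type ideals and is absorbed.

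\textbf{Necessity.} Assume $\v(\calJ_{K_m,G})=2$ and that (1) fails. By \Cref{cor:v_empty} we have $\v_\emptyset=\gamma_c^*(G)\ne 2$, so there is a nonempty $T\in\calC(G)$ and an $f$ of degree $2$ with $(\calJ_{K_m,G}:f)=\frakP_T(K_m,G)$. Following Dey et al., we pass to initial terms using the lex order \eqref{eqn:lex_order}. The Gröbner degeneration of binomial edge ideals in terms of admissible paths (available for $\calJ_{K_m,G}$ by \cite{MR3290687}) lets us reduce to $f$ whose support involves two variables $x_{i,u},x_{k,v}$ with $u,v\notin T$. From $x_{a,t}f\in\calJ_{K_m,G}$ for all $t\in T$, we deduce that $T\subseteq N_G(u)\cap N_G(v)$. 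From $f\notin \frakP_{T'}$ for the appropriate other cut sets, we get $\{u,v\}\notin E(G)$, the equality $T=N_G(u)\cap N_G(v)$, the cone structure of the components containing $u$ and $v$, and the completeness of the other components. The last condition comes from requiring the colon to contain every $\calJ_{K_m,\widetilde{G_i}}$: an edge missing inside a component would have to be produced by a colon by $f$, and by \Cref{cor:colon} and a parity/degree argument this is possible only through $u$ or $v$.

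The main obstacle is this necessity direction, that is, controlling an arbitrary degree-$2$ $f$ rather than a monomial. Here the $m>2$ setting introduces more variables per vertex. We would first try to show that $f$ can be replaced by a single term in $\ini(f)$ modulo $\calJ_{K_m,G}$, and then observe that the row indices are irrelevant thanks to the $GL_m$-symmetry permuting rows. This symmetry reduces the combinatorics verbatim to the $m=2$ analysis of \cite{MR4869330}.
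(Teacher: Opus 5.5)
Your overall frame is the intended one: mirror \cite[Theorem 4.1]{MR4869330}, get $\v\ge 2$ from non-coneness, handle condition (1) via \Cref{cor:v_empty}, and otherwise work with a nonempty $T$ satisfying $\v_T(\calJ_{K_m,G})=2$. However, your treatment of condition (2) has a genuine gap, because you use the wrong kind of degree-$2$ element.

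\textbf{Why monomials cannot work.} Suppose $T\ne\emptyset$ and $(\calJ_{K_m,G}:f)=\frakP_T(K_m,G)$. Then $x_{i,t}f\in\calJ_{K_m,G}\subseteq\frakP_\emptyset$ for $t\in T$. Now $\frakP_\emptyset$ is prime and contains no monomials, since every monomial takes the value $1$ at the rank-one all-ones matrix. Hence $f\in\frakP_\emptyset$, and $f$ cannot be a monomial.
\begin{itemize}
\item So neither $x_{1,u}x_{1,v}$ nor $x_{1,u}x_{2,v}$ can serve as a witness. Indeed, you computed yourself that $(\calJ_{K_m,G}:x_{1,u}x_{1,v})=\calJ_{K_m,G_{uv}}$, and this ideal contains no variables.
\item Your claimed membership $x_{i,t}x_{1,u}x_{1,v}\in\calJ_{K_m,G}$ is false. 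The three-term identity in \Cref{thm:colon_x} multiplies a variable by a minor, not by a monomial.
\item For the same reason, your necessity plan to ``replace $f$ by a single term'' cannot succeed.
\item $GL_m$-symmetry alone does not reduce a general quadric in $\frakP_\emptyset$, which is a combination of minors with various row pairs, to the $m=2$ case.
\end{itemize}

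\textbf{The missing ingredient} is the non-edge binomial together with \Cref{thm:colon_edge}.

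For sufficiency, take $f=p_{(\{1,2\},\{u,v\})}$.
\begin{itemize}
\item Since $T=N_G(u)\cap N_G(v)$ separates $u$ from $v$, the path monomials of \Cref{thm:colon_edge} generate exactly $(x_{k,t}\mid k\in[m],\,t\in T)$. Paths $u,t,v$ give the variables, and every longer $u$--$v$ path meets $T$.
\item $G^\dag_{\{u,v\}}$ completes $N_G(u)=(V(G_1)\setminus\{u\})\cup T$ and, likewise, $N_G(v)$.
\item Modulo the variables indexed by $T$, the cone hypotheses make $G_1$ and $G_2$ complete, and the other components are complete by assumption.
\end{itemize}
Hence the colon is exactly $\frakP_T(K_m,G)$.

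For necessity, first reduce to a single minor.
\begin{itemize}
\item Give $x_{i,j}$ multidegree $(e_i,e_j)\in\mathbb{Z}^m\times\mathbb{Z}^n$. Both $\calJ_{K_m,G}$ and $\frakP_T(K_m,G)$ are multihomogeneous.
\item Since $\calJ_{K_m,G}$ is radical, every element of $(\calJ_{K_m,G}:\frakP_T)\setminus\frakP_T$ has colon exactly $\frakP_T$. So you may take $f$ multihomogeneous.
\item A multihomogeneous quadric in $\frakP_\emptyset$ is a scalar multiple of a single minor $[k_1,k_2\mid u,v]$. Necessarily $\{u,v\}\notin E(G)$, otherwise $f\in\calJ_{K_m,G}$.
\end{itemize}
Now \Cref{thm:colon_edge} finishes the argument.
\begin{itemize}
\item The linear part of the colon is spanned by the $x_{k,w}$ with $w\in N_G(u)\cap N_G(v)$. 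This forces $T=N_G(u)\cap N_G(v)$.
\item Each path monomial must lie in the prime $\frakP_T$. This forces every $u$--$v$ path to meet $T$.
\item Comparing binomial parts forces $G^\dag_{\{u,v\}}\setminus T$ to be the disjoint union of the cliques $\widetilde{G_i}$. This is precisely the cone condition on $G_1$ and $G_2$ together with completeness of the remaining components.
\end{itemize}
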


So far, the results in this section are straightforward generalizations of the corresponding binomial edge ideal results to the setting of generalized binomial edge ideals. We nonetheless list a few more such results for subsequent application, whose proofs follow almost identically to the original arguments and are thus omitted for brevity.

\begin{Lemma}
    [see also {\cite[Corollary 3.12]{MR4834446}}]
    Let $G = G_1 \sqcup G_2$ be the disjoint union of graphs $G_1$ and $G_2$. Then $\v(\calJ_{K_m,G}) = \v(\calJ_{K_m,G_1}) + \v(\calJ_{K_m,G_2})$.
\end{Lemma}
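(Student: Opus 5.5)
The plan is to reduce to a tensor-product situation. Write $S=S_1\otimes_\KK S_2$, where $S_1=\KK[x_{i,j}: i\in[m], j\in V(G_1)]$ and $S_2=\KK[x_{i,j}: i\in[m], j\in V(G_2)]$. Set $I_1=\calJ_{K_m,G_1}\subset S_1$ and $I_2=\calJ_{K_m,G_2}\subset S_2$. Since every generator $p_{(\varepsilon_1,\varepsilon_2)}$ involves an edge $\varepsilon_2$ of exactly one component, we have $\calJ_{K_m,G}=I_1S+I_2S$.

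The cut sets of $G$ are exactly the sets $T_1\sqcup T_2$ with $T_\ell\in\calC(G_\ell)$. Indeed, a vertex $v\in T_1$ is a cut vertex of $G\setminus(T\setminus\{v\})$ if and only if it is one of $G_1\setminus(T_1\setminus\{v\})$. Moreover,
\[
\frakP_{T_1\sqcup T_2}(K_m,G)=\frakP_{T_1}(K_m,G_1)S+\frakP_{T_2}(K_m,G_2)S.
\]
So every associated prime of $\calJ_{K_m,G}$ is a sum $\frakp_1S+\frakp_2S$ of associated primes of the two pieces, and it suffices to prove
\[
\v_{\frakp_1S+\frakp_2S}(I_1S+I_2S)=\v_{\frakp_1}(I_1)+\v_{\frakp_2}(I_2)
\]
for radical ideals in disjoint sets of variables. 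Taking the minimum over pairs $(\frakp_1,\frakp_2)$ then gives the statement.

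For the inequality $\le$, take $f_\ell\in S_\ell$ homogeneous of degree $\v_{\frakp_\ell}(I_\ell)$ with $(I_\ell:f_\ell)=\frakp_\ell$. We show $(I_1S+I_2S : f_1f_2)=\frakp_1S+\frakp_2S$ using the standard fact that for ideals $A_1\subset S_1$ and $A_2\subset S_2$, and elements $g_\ell\in S_\ell$, one has
\[
(A_1S+A_2S:g_1g_2)=(A_1:g_1)S+(A_2:g_2)S.
\]
This holds because $S/(A_1S+A_2S)\cong S_1/A_1\otimes_\KK S_2/A_2$. Multiplication by $g_1\otimes g_2$ factors as $(g_1\otimes 1)(1\otimes g_2)$, and over a field the kernel of $\phi_1\otimes\phi_2$ is $\ker\phi_1\otimes M_2+M_1\otimes\ker\phi_2$. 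This is the exactness of tensor products over $\KK$.

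For the inequality $\ge$, which I expect to be the main obstacle, I would follow \cite[Corollary 3.12]{MR4834446} and use the characterization of the local $\v$-number of a radical ideal. Namely, $\v_{\frakp}(I)$ is the minimal degree of a homogeneous $f\notin\frakp$ with $f\in\bigcap_{\frakq\in\Ass(I)\setminus\{\frakp\}}\frakq$; equivalently, the initial degree of $(I:\frakp)/I$. Here $I:\frakp=\bigcap_{\frakq\ne\frakp}\frakq$.

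With $I=I_1S+I_2S$ and $\frakp=\frakp_1S+\frakp_2S$, the same tensor argument gives
\[
(I:\frakp)=(I_1:\frakp_1)S\cap(I_2:\frakp_2)S+\ldots
\]
More cleanly, $(I:\frakp)/I\cong\bigl((I_1:\frakp_1)/I_1\bigr)\otimes_\KK\bigl((I_2:\frakp_2)/I_2\bigr)$. This follows because $(I:\frakp)=(I:\frakp_1S)\cap(I:\frakp_2S)$, and each colon splits: for instance, $(I:\frakp_1S)=(I_1:\frakp_1)S+I_2S$, by flatness and the direct-sum decomposition over $\KK$. Intersecting gives $(I_1:\frakp_1)S\cap(I_2:\frakp_2)S+I$, and $(I_1:\frakp_1)S\cap(I_2:\frakp_2)S$ is the product $(I_1:\frakp_1)(I_2:\frakp_2)S$, since the ideals live in disjoint variables.

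The quotient is then a tensor product of graded vector spaces, so its initial degree is the sum of the two initial degrees, yielding the equality. The delicate points to check carefully are the lower bound via this tensor decomposition and the bookkeeping of cut sets of disjoint unions.
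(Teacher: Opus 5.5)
Your proposal is correct and follows the route the paper intends. The paper omits the proof and defers to the argument of \cite[Corollary 3.12]{MR4834446}: write $\calJ_{K_m,G}$ as a sum of ideals in disjoint sets of variables, match the cut sets of $G$ with pairs $(T_1,T_2)$ so that $\frakP_{T_1\sqcup T_2}(K_m,G)=\frakP_{T_1}(K_m,G_1)S+\frakP_{T_2}(K_m,G_2)S$, and prove that local $\v$-numbers are additive. Your graded isomorphism $(I:\frakp)/I\cong (I_1:\frakp_1)/I_1\otimes_{\KK}(I_2:\frakp_2)/I_2$ does exactly this, and it is valid here because all three ideals are radical. It in fact gives both inequalities at once, so your separate argument for $\le$ is not needed.
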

Due to this result, we will focus on connected closed graphs in subsequent sections.

\begin{Definition}
    \label{def:G_f}
    Let $G$ be a simple graph on the vertex set $[n]$ and a non-edge $\varepsilon=\{k,l\}$. We define $G_\varepsilon^\dag$ to be the graph on $[n]$ with edge set
    \[
        E(G_\varepsilon^\dag)= E(G)\cup\{\{u, v\} \mid u, v\in N_G(k) \text{\ or\ } u,v \in  N_G(l)\}.
    \]
\end{Definition}

The notation in \Cref{def:G_f} differs from the standard usage in \cite{MR3169597},
since we wish to avoid confusion with the completion graph of \(G\) with respect to the set \(\{k,l\}\subset V(G)\).

\begin{Lemma}
    [see also {\cite[Theorem 3.7]{MR3169597}}]
    \label{thm:colon_edge}
    Let $G$ be a simple graph on the set $[n]$ and $\varepsilon_1\in E(K_m)$. Then for every non-edge $\varepsilon_2=\{k,l\}$ of \(G\), we have
    \begin{align*}
        (\calJ_{K_m,G}:p_{(\varepsilon_1,\varepsilon_2)})
        =\calJ_{K_m,G_{\varepsilon_2}^\dag}&+\big(x_{i_1,k_1}x_{i_2,k_2}\cdots x_{i_s,k_s}\mid  \text{$i_1,i_2,\ldots,i_s\in [m]$,}\\
        &\text{and there is  a path\ } P: k, k_1,\ldots, k_s, l \text{\ from $k$ to $l$ }\big).
    \end{align*}
\end{Lemma}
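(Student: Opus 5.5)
We prove the two inclusions separately, following the pattern of \Cref{thm:colon_x}. Write $p=p_{(\varepsilon_1,\varepsilon_2)}$ and let $L$ denote the right-hand side.

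\textbf{The inclusion $L\subseteq(\calJ_{K_m,G}:p)$.} \Cref{cor:colon} gives directly that every monomial $x_{i_1,k_1}\cdots x_{i_s,k_s}$ attached to a path from $k$ to $l$ lies in the colon ideal.

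For the remaining generators of $\calJ_{K_m,G_{\varepsilon_2}^\dag}$, it suffices by symmetry to take $u,v\in N_G(k)$ and indices $a<b$ in $[m]$, and to show that
\[
p\cdot[a,b\mid u,v]\in \calJ_{K_m,G}.
\]
By \Cref{thm:colon_x} we have $x_{c,k}[a,b\mid u,v]\in\calJ_{K_m,G}$ for every $c\in[m]$. Every term of $p=x_{i,k}x_{j,l}-x_{i,l}x_{j,k}$ (with $\varepsilon_1=\{i,j\}$) is divisible by some variable $x_{c,k}$, so the product lies in $\calJ_{K_m,G}$. The case $u,v\in N_G(l)$ is identical, using $x_{c,l}$. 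Hence $L\subseteq(\calJ_{K_m,G}:p)$.

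\textbf{The inclusion $(\calJ_{K_m,G}:p)\subseteq L$.} Since $\calJ_{K_m,G}=\bigcap_{T\in\calC(G)}\frakP_T(K_m,G)$ is radical, we get
\[
(\calJ_{K_m,G}:p)=\bigcap_{T:\,p\notin\frakP_T}\frakP_T(K_m,G).
\]
Moreover, $p\notin\frakP_T$ exactly when $k,l\notin T$ and $k,l$ lie in different components of $G\setminus T$. So the colon is an intersection of primes indexed by cut sets $T$ separating $k$ from $l$.

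The plan is to show that $L$ is radical and to compute its minimal primes. First, $L\subseteq\frakP_T$ for each such $T$:
\begin{itemize}
\item every path from $k$ to $l$ meets $T$, so each path monomial lies in $\frakP_T$;
\item neighbors of $k$ outside $T$ lie in the component of $k$, so $\calJ_{K_m,G^\dag_{\varepsilon_2}}\subseteq\frakP_T$.
\end{itemize}

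For the converse, take $f\in\bigcap\frakP_T$ and reduce to the case $m=2$. There \cite[Theorem 3.7]{MR3169597} gives the analogous equality, with its proof running through a Gröbner basis or initial ideal argument. The main obstacle is extending this to general $m$.

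I would first try to mimic the $m=2$ argument by decomposing $L$ directly. Any prime $Q\supseteq L$ must contain a variable $x_{\ast,k_t}$ from each path, so $Q$ contains all variables in columns of some vertex set $W$ meeting every $k$–$l$ path. Modulo these variables, and using the primality of generalized binomial edge ideals of complete graphs, $Q$ contains $\frakP_{T}(K_m,G)$ for a cut set $T\subseteq W$ separating $k$ and $l$. One then has to check that $\calJ_{K_m,G^\dag_{\varepsilon_2}}$ does not force extra components. Showing that $L$ is radical, via a squarefree initial ideal, is the delicate point.

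Alternatively, I would use a multigrading or specialization trick. Each $\frakP_T(K_m,G)$ is described column-wise, and the containment $f\in L$ can be tested after restricting to pairs of rows. This would let the $K_2$ case, i.e.\ \cite[Theorem 3.7]{MR3169597}, be applied row-pair by row-pair, mirroring how \Cref{cor:v_empty} was lifted from $m=2$.
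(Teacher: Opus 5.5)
\textbf{There is a genuine gap: the reverse inclusion $(\calJ_{K_m,G}:p)\subseteq L$ is never proved.} This inclusion is the entire content of the lemma, and you name it ``the main obstacle'' and stop. Note that the paper omits this proof as well. It only asserts that the argument of \cite[Theorem 3.7]{MR3169597} carries over almost verbatim to $m$ rows, so that adaptation is exactly what you would have to carry out.

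What you do have is correct. The inclusion $L\subseteq(\calJ_{K_m,G}:p)$ holds: the path monomials are handled by \Cref{cor:colon}, and for $u,v\in N_G(k)$ every term of $p$ is divisible by some $x_{c,k}$ with $x_{c,k}[a,b\mid u,v]\in\calJ_{K_m,G}$ by \Cref{thm:colon_x}. Your description of $(\calJ_{K_m,G}:p)$ as $\bigcap\frakP_T(K_m,G)$, over the cut sets $T$ separating $k$ from $l$, is also correct.

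Your first route only relocates the difficulty. The minimal-prime computation can be completed. No edge added to form $G^\dag_{\varepsilon_2}$ contains $k$ or $l$, so both are simplicial in $G^\dag_{\varepsilon_2}\setminus W$ and never lie in a cut set of it. Hence a prime $Q\supseteq L$ satisfies $Q\supseteq\frakP_U(K_m,G^\dag_{\varepsilon_2})\supseteq\frakP_U(K_m,G)$ for some $U\supseteq W$ with $k,l\notin U$ separated. Discarding from $U$ the vertices adjacent to at most one component of $G\setminus U$ gives a separating $T\in\calC(G)$ with $\frakP_T(K_m,G)\subseteq Q$. But this yields only $\sqrt{L}=(\calJ_{K_m,G}:p)$, so the lemma becomes equivalent to $L$ being radical, which you leave open.

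Radicality is not automatic. A prime binomial ideal plus a squarefree monomial ideal can fail to be radical: $(x_1x_2-x_3x_4)+(x_1x_3)$ contains $(x_3x_4)^2$ but not $x_3x_4$. You need an actual argument here, e.g.\ a Gr\"obner basis of $L$ with squarefree initial ideal, or the $m=2$ proof genuinely redone for $m$ rows.

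Your second route fails as stated. Membership in $L$ cannot be detected by restricting to pairs of rows, because such restrictions kill every monomial involving three distinct rows. For instance, if $m\ge 3$ and $w$ is an isolated vertex of $G$, then $x_{1,w}x_{2,w}x_{3,w}$ restricts to $0$ on every pair of rows. Yet it is not in $L$, since no generator of $L$ involves column $w$. Also, \Cref{cor:v_empty} was not lifted from $m=2$ by any row-pair specialization. There the paper generalizes the ingredient lemmas (\Cref{thm:colon_x} and \cite[Lemma 3.5]{MR4834446}) to $m$ rows and reruns the original argument.
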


\begin{Lemma}
    [see also {\cite[Proposition 3.3]{MR4869330}}]
    \label{lem:change_initial}
    Suppose that $I$ is a graded ideal of $S$ and that  $f$ is a homogeneous form such that $(I:f)=\frakP$ for some $\frakP\in \Ass(I)$. Then, there exists a homogeneous form $g$ of the same degree such that $(I:g)=\frakP$ and $\ini(g)\notin \ini(I)$.
\end{Lemma}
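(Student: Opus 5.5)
Replace $f$ by its normal form modulo a Gröbner basis of $I$, with a truncation trick to keep it homogeneous of degree $d=\deg f$. Fix a Gröbner basis of $I$ with respect to the lexicographic order \eqref{eqn:lex_order}. Let $g$ be the remainder of $f$ upon division by this basis; since $I$ is graded and $f$ is homogeneous, we may use a homogeneous Gröbner basis. Then $g$ is homogeneous of degree $d$ (or zero), $f-g\in I$, and no monomial in the support of $g$ lies in $\ini(I)$. In particular, whenever $g\neq 0$, we have $\ini(g)\notin\ini(I)$.

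Next I will check that replacing $f$ by $g$ does not change the colon ideal. Since $f-g\in I$, for any $h\in S$ we have $hf\in I$ if and only if $hg\in I$. Hence $(I:g)=(I:f)=\frakP$. The degree is preserved because the division algorithm applied to homogeneous input with homogeneous divisors produces a homogeneous remainder of the same degree.

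It remains to rule out $g=0$. If $g=0$, then $f\in I$, so $(I:f)=S$. But $\frakP$ is an associated prime and hence a proper ideal, so $(I:f)=\frakP\neq S$. Therefore $g\neq0$ and $\ini(g)\notin\ini(I)$, which proves the lemma.

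The only point needing care is homogeneity of the normal form, and that is standard. No structural property of $I$ beyond being graded is used; in particular, the argument works for any monomial order, not only \eqref{eqn:lex_order}.
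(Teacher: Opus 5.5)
Your argument is correct and is essentially the standard one behind Proposition~3.3 of Dey, Jayanthan and Saha, which the paper cites without giving its own proof. Replacing $f$ by its homogeneous, degree-preserving normal form $g$ modulo a homogeneous Gr\"obner basis of $I$ leaves the colon ideal unchanged because $f-g\in I$, and $g\neq 0$ because $f\notin I$ (as $\frakP\neq S$), so nothing is missing.
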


 We conclude this section by emphasizing that generalized binomial edge ideals are more involved, with the study of their \(\v\)-numbers being no exception, as is evident from our analysis in Section~\ref{sec:closegraph}.

\section{Local \(\v\)-numbers of  binomial edge ideals of connected closed graphs} 
\label{sec:nonCM_closed_graph}

In \cite[Theorem 3.4]{MR4869330}, Dey et al.~established the local v-numbers of binomial edge ideals associated with connected Cohen--Macaulay closed graphs. In this section, we extend this result by computing the same invariants for binomial edge ideals without the Cohen--Macaulay assumption.

\begin{Setting}
    \label{set:closed_graph_non_CM}
    Let $G$ denote a connected closed graph. After labeling, we can assume that its clique complex $\Delta(G)=\langle F_1,\ldots,F_t\rangle$ has facets $F_i=[a_i,b_i]$ satisfying $1=a_1<\cdots < a_t<b_t=n$. For any $1\leq i\leq t-1$, let $W_i\coloneqq F_i\cap F_{i+1}$. We call the sets \(W_i\) the \emph{connected cut sets} of $G$.
\end{Setting}

In the following, we will always assume that \(G\) is a connected  closed graph, which satisfies the assumptions in \Cref{set:closed_graph_non_CM}.  We start by considering the local \(\v\)-number of \(\calJ_{K_m,G}\) with respect to the empty set \(\emptyset\) in \(\calC(G)\).
\begin{Construction}
    \label{constr:CDS}
    Let \(G\) be a connected closed graph.
    Set \(c_0=1=a_1\) and \(c_1=b_1\). Furthermore, for each \(c_i\), if \(c_i\ne n\), then we set \(c_{i+1}=\max\{\max(F_k)\mid c_i\in F_k\}\). Therefore, we have vertices \(c_0=1,c_1,\dots,c_{d},c_{d+1}=n\) for some  positive integer \(d\). These vertices form a path of minimal length connecting \(1\) to \(n\). 
\end{Construction}

\begin{Example}
    Let \(G\) be a connected closed graph with maximal cliques \([1,5]\), \([3,6]\), \([4,8]\), \([5,10]\) and \([7,12]\). Then the  vertex set of the  path constructed in \Cref{constr:CDS} is \(\{1,5,10,12\}\) with \(d=2\).
\end{Example}

\begin{Proposition}
    \label{lem:minimal_CDS}
    The number \(d\) in \Cref{constr:CDS} is the reduced connected domination number \(\gamma_c^{*}(G)\) of \(G\). In particular, it gives \(\v_{\emptyset}(\calJ_{K_m,G})\) for every \(m\ge 2\).
\end{Proposition}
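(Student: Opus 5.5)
Throughout, $d$ is the number of \emph{interior} vertices $c_1,\dots,c_d$ of the path in \Cref{constr:CDS}, whose length is $d+1$. We prove $\gamma_c^*(G)=d$ by showing that $D=\{c_1,\dots,c_d\}$ is a reduced connected dominating set and that no such set is smaller. The statement about $\v_\emptyset(\calJ_{K_m,G})$ then follows from \Cref{cor:v_empty}.

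\emph{Degenerate case.} If $G$ is complete, then $t=1$ and $c_1=b_1=n$. Hence $d=0=\gamma_c^*(G)$, since $\emptyset$ is a reduced connected dominating set. From now on we assume $G$ is not complete, so $d\ge 1$ and $\gamma_c^*(G)=\gamma_c(G)$.

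\emph{Key structural fact.} We show by induction on $i$ that the set of vertices at distance at most $i$ from $1$ is exactly the interval $[1,c_i]$.
\begin{itemize}
\item The base case $i=0$ is trivial.
\item For the inductive step, let $w\le c_i$. Every neighbour of $w$ lies in a maximal clique $F_k=[a_k,b_k]$ containing $w$. If $a_k\le c_i$, then $F_k$ also contains some vertex of $[w,c_i]$, and in fact $c_i\in F_k$ whenever $b_k>c_i$. So the largest reachable vertex is $\max\{\max F_k\mid c_i\in F_k\}=c_{i+1}$.
\item All vertices in between are also reachable: any $u$ with $c_i<u<c_{i+1}$ is adjacent to $c_i$, because $\{c_i,c_{i+1}\}\in E(G)$ and condition (b) of the definition of closed graphs applies.
\end{itemize}
Consequently $\operatorname{dist}(1,n)=d+1$. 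Moreover, every vertex $u$ with $c_i\le u\le c_{i+1}$ is adjacent to or equal to both $c_i$ and $c_{i+1}$, again by condition (b).

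\emph{Upper bound.} Consecutive vertices $c_i,c_{i+1}$ are adjacent, so $G[D]$ is a path and hence connected. Take $u\notin D$. Then $u\in[c_i,c_{i+1}]$ for some $i$, and by the key fact $u$ is adjacent to $c_i$ or $c_{i+1}$, at least one of which lies in $D$ because $d\ge1$. So any two vertices outside $D$ are joined by a path whose interior lies in the connected set $D$. This gives $\gamma_c^*(G)\le d$.

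\emph{Lower bound.} Let $D'$ be any connected dominating set. Both $1$ and $n$ are in $D'$ or adjacent to it. Since $G[D']$ is connected, we can build a walk from $1$ to $n$ all of whose vertices, except possibly the endpoints $1$ and $n$, lie in $D'$. Shorten it to a path $1=w_0,w_1,\dots,w_k=n$. Then $k\ge d+1$, and $w_1,\dots,w_{k-1}\in D'$ are $k-1\ge d$ distinct vertices. Hence $|D'|\ge d$ and $\gamma_c^*(G)=d$.

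\emph{Main obstacle.} The delicate step is the inductive description of the distance balls as the intervals $[1,c_i]$. It needs a careful use of closedness to show that no clique containing a vertex $w<c_i$ reaches beyond $c_{i+1}$. The argument is: such a clique $[a_k,b_k]$ with $b_k>c_i$ must contain $c_i$, because it is an interval containing both $w$ and a vertex above $c_i$.
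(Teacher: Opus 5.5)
Your proof is correct and is essentially the paper's argument. The upper bound via the cliques $[c_i,c_{i+1}]$ is the same, and your description of the distance balls around $1$ as the intervals $[1,c_i]$ repackages the paper's inductive comparison $c'_j\le c_j$ against a putatively shorter $1$--$n$ path.

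The only difference is that you prove the lower bound for ordinary connected dominating sets and transfer it through the identification $\gamma_c^*=\gamma_c$ from the preliminaries, whereas the paper routes the $1$--$n$ path through a reduced connected dominating set directly. This is harmless, even though under the literal definition such a set need not dominate (e.g.\ $\{3\}$ in $P_3$): a short case check on whether $1$ or $n$ lies in $D$ gives $|D|\ge \operatorname{dist}(1,n)-1$ for every reduced connected dominating set $D$ as well.
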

\begin{proof}
    First, we will show that \(\{c_1,\dots,c_d\}\) is a reduced connected dominating set of \(G\). To see this, notice that \(c_i\) is connected to \(c_{i+1}\) for \(i=0,1,\dots,d\). Therefore, \([1,n]=[c_0,c_{d+1}]\) is covered by the (not necessarily maximal) cliques \([c_0,c_1],[c_1,c_2],\dots,[c_d,c_{d+1}]\). Since every such clique intersects \(\{c_1,\dots,c_d\}\), the latter subset is a reduced connected dominating set of \(G\). In particular, \(d\ge \gamma_c^{*}(G)\).

    Suppose for contradiction that \(d>\gamma_c^{*}(G)\). Then there exists a reduced connected dominating set \(D\) of \(G\) with \(\abs{D}<d\). In particular, there is a path \(c_0'=1,c_1',\dots,c_{d'}',c_{d'+1}'=n\) in \(G\) with \(c_1',\dots,c_{d'}'\in D\). It is clear that \(d'\le \abs{D}<d\). Since \(c_0'=1\) is a simplicial vertex, we must have \(c_1'\in F_1\). In particular, \(c_1'\le c_1\). We claim that \(c_2'\le c_2\).  Suppose for contradiction that \(c_2'>c_2\). Since \(c_1'\) and \(c_2'\) are connected, they are in a common maximal clique \(F\). Since \(c_1'\le c_1<c_2<c_2'\), \(c_1\) and \(c_2\) also belong to \(F\). But \(c_2\ne \max(F)\) due to the existence of \(c_2'\), which contradicts the choice of \(c_2\). Therefore, \(c_2'\le c_2\), as claimed. If we continue this reasoning, we will see that \(c'_{d'}\le c_{d'}\). Notice that \(c_{d'}'\) is connected to \(c_{d'+1}'=n\). Therefore, \(c'_{d'}\) and \(n\) are in a common maximal clique \(F'\). Since \(d>d'\), we see that \(c_{d'},c_{d'+1}\) and \(c_{d+1}=n\) are three distinct vertices in \(F'\). This contradicts the choice of \(c_{d'+1}\). Therefore, we must have \(d=\gamma_c^{*}(G)\).

    As a result, the ``in particular'' part of \Cref{lem:minimal_CDS} follows directly from \Cref{cor:v_empty}.
\end{proof}

Next, we consider the local \(\v\)-number of \(\calJ_{K_m,G}\) with respect to non-empty cut sets. The structure of those sets is explained below.

\begin{Lemma}
    [{\cite[Proposition 1.4]{MR4447411}}]
    \label{prop:cutsetsclosed}
    Let $G$ be a connected closed graph.
    Then, the following are equivalent for a non-empty set $T\subset [n]$:
    \begin{enumerate}[a]
        \item $T$ is a non-empty cut set of $G$;
        \item $T=W_{j_1}\sqcup \cdots \sqcup W_{j_s}$ for some $s\geq 1$ and $1\leq j_1<\cdots <j_s\leq t-1$, where $W_{j_i}$ is a connected cut set of $G$ for every $i$, and $\max(W_{j_i})+1<\min(W_{j_{i+1}})$ for $1\leq i\leq s-1$.
    \end{enumerate}
\end{Lemma}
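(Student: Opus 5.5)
We prove the two implications separately, working throughout in the setting of a connected closed graph with maximal cliques $F_i=[a_i,b_i]$, $1=a_1<\cdots<a_t$ and $b_1<\cdots<b_t=n$, and $W_i=F_i\cap F_{i+1}=[a_{i+1},b_i]$. The two basic facts we rely on are these: a nonempty set $T$ is a cut set precisely when removing each of its vertices, given the others, increases the number of components; and $G\setminus S$ is again a closed graph whose components are consecutive intervals in the induced order.

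\textbf{From (b) to (a).} Let $T=W_{j_1}\sqcup\cdots\sqcup W_{j_s}$ with the gaps $\max(W_{j_i})+1<\min(W_{j_{i+1}})$.
\begin{enumerate}[1]
\item Removing $W_j=[a_{j+1},b_j]$ disconnects $[1,a_{j+1}-1]$ from $[b_j+1,n]$. Indeed, no clique contains both a vertex $<a_{j+1}$ and a vertex $>b_j$: a clique $F_k$ with $k\le j$ has $\max F_k=b_k\le b_j$, while a clique with $k\ge j+1$ has $\min F_k=a_k\ge a_{j+1}$.
\item Both sides are nonempty. We have $a_{j+1}>1$ because $a_1=1<a_{j+1}$, and $b_j<n$ because $b_j<b_t$.
\item Consequently the components of $G\setminus T$ are the intervals lying strictly between consecutive removed blocks, together with the two end intervals. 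Each of these is nonempty by the gap condition and connected because it is an interval of a closed graph.
\item Fix $v\in W_{j_i}$ and add it back to $G\setminus T$. Then $v$ is adjacent to $a_{j_i+1}-1$ (both lie in $F_{j_i}$) and to $b_{j_i}+1$ (both lie in $F_{j_i+1}$). It therefore merges the two components on either side of $W_{j_i}$, so $v$ is a cut vertex of $G\setminus(T\setminus\{v\})$.
\end{enumerate}

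\textbf{From (a) to (b).} Let $T$ be a nonempty cut set and decompose it into maximal blocks of consecutive integers.
\begin{enumerate}[1]
\item Every component of $G\setminus T$ is an interval. Hence each vertex $v\in T$ joins exactly two components, one to its left and one to its right. In particular $v\ne 1$ and $v\ne n$, since $1$ and $n$ are simplicial and so are never cut vertices.
\item Take a maximal block $B=[p,q]\subseteq T$. Then $p-1$ and $q+1$ lie outside $T$, and every vertex of $B$ must be adjacent to both $p-1$ and $q+1$. Otherwise some $v\in B$ would fail to connect the components on either side of $B$. Here one checks that the component on the left of $B$ contains $p-1$ and the component on the right contains $q+1$; by closedness, $v$ reaching a vertex further left forces $v$ to reach $p-1$ as well.
\item Moreover $p-1$ and $q+1$ are non-adjacent, since $T$ separates them.
\item By closedness, $\{p-1,q\}$ lies in some clique $F_k$ and $\{p,q+1\}$ in some clique $F_{k'}$, while $p-1,q+1$ lie in no common clique. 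Choose $j$ with $a_{j+1}=p$ and $b_j=q$: let $j$ be the largest index with $a_j\le p-1$. Then $b_j\ge q$ because $p-1$ is adjacent to $q$, and $b_j<q+1$ because $p-1$ is not adjacent to $q+1$; so $b_j=q$. Similarly $a_{j+1}=p$, using that $q+1$ is adjacent to $p$ but not to $p-1$. Hence $B=W_j$.
\item The separation condition $\max(W_{j_i})+1<\min(W_{j_{i+1}})$ holds automatically, because the blocks are maximal and therefore separated by at least one vertex not in $T$.
\end{enumerate}

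The main obstacle is the index bookkeeping in step 4 of the second implication, that is, showing the block is exactly some $W_j$ rather than merely contained in a clique intersection. I would handle it by the extremal choice of $j$ described there and by the interval monotonicity of the $a_i$ and $b_i$.
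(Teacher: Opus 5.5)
Your proof is correct. The paper gives no proof of this lemma; it quotes it from Ene--Rinaldo--Terai \cite{MR4447411}. So your write-up is a self-contained route rather than a variant of an argument in the text.

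You use only elementary features of a closed labeling:
\begin{itemize}
\item every interval of $[n]$ induces a connected subgraph, because $\{i,i+1\}\in E(G)$;
\item the neighbourhood of each vertex is an interval;
\item the $a_i$ and the $b_i$ are strictly increasing.
\end{itemize}
With these, the extremal choice of $j$ in step 4 of (a)$\Rightarrow$(b) cleanly pins down $a_{j+1}=p$ and $b_j=q$. The citation buys brevity. Your argument buys transparency: it makes explicit the fact the paper later uses without comment in \Cref{lem:colon_edge_short}, namely that every vertex of $W_j$ is adjacent to both $\min(W_j)-1$ and $\max(W_j)+1$, while these two vertices are non-adjacent.

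The one step that is thinner than it should be is step 1 of (a)$\Rightarrow$(b). The inference ``every component of $G\setminus T$ is an interval, hence each $v\in T$ joins exactly two components, one on each side'' does not follow from the interval property alone. Insert the closedness check:
\begin{itemize}
\item If $x<y<v$ are both neighbours of $v$ outside $T$, then $\{x,v\}\in E(G)$ forces $\{x,y\}\in E(G)$.
\item Hence the left neighbours of $v$ outside $T$ all lie in a single component of $G\setminus T$, and likewise the right neighbours.
\item Since $v$ is a cut vertex of $G\setminus(T\setminus\{v\})$, it meets at least two components of $G\setminus T$. So it meets exactly one on each side, and these two components are distinct.
\end{itemize}
With this in place, the interval description of the components (which you state but do not justify) is not actually needed in this direction. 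Steps 2--5 only use that $v$ has neighbours outside $T$ on both sides, lying in distinct components.
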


It follows readily from this characterization that a non-empty set \(T\) is a connected cut set
if and only if it is a cut set and connected.

To obtain the local \(\v\)-number, we need to understand the colon ideals with respect to binomials associated with non-edges. We have discussed those ideals in \Cref{thm:colon_edge}. The result will be simpler, when \(G\) is a connected closed graph.

\begin{Lemma}
    \label{lem:colon_edge_short}
    Suppose that \(W_i = F_i \cap F_{i+1}\) is a connected cut set of a connected closed graph \(G\). Take arbitrary \(\varepsilon_1 \in E(K_m)\) and choose \(\varepsilon_2 = \{a_{i+1}-1, b_{i}+1\}\).  Then we have
    \[
        (\calJ_{K_m,G} : p_{(\varepsilon_1,\varepsilon_2)})
        =
        \calJ_{K_m,G\setminus W_i} + (x_{k,u} \mid k \in [m], u \in W_i).
    \]
\end{Lemma}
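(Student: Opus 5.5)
The plan is to specialize \Cref{thm:colon_edge} to the non-edge $\varepsilon_2=\{k,l\}$ with $k\coloneqq a_{i+1}-1$ and $l\coloneqq b_i+1$. Then we simplify the two summands in that formula using the interval structure of the cliques in \Cref{set:closed_graph_non_CM}.

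\textbf{Step 1: $\varepsilon_2$ is a non-edge, and the neighborhoods.} Since $a_{i+1}>a_i$ and $b_i<b_{i+1}$, we have $k\in F_i\setminus F_{i+1}$ and $l\in F_{i+1}\setminus F_i$. If $\{k,l\}$ were an edge, condition (b) of the definition of closed graphs would make $[k,l]\supsetneq W_i$ a clique. Such a clique lies in some facet $F_j$, and $F_j$ would have to strictly extend $F_i$ or $F_{i+1}$, which is a contradiction. Next, every maximal clique containing $k$ is some $F_j$ with $j\le i$, because $k<a_{i+1}\le a_{j}$ for $j>i$. Hence $N_G(k)\cup\{k\}=[c,b_i]$ for some $c$, and symmetrically $N_G(l)\cup\{l\}=[a_{i+1},c']$. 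In particular every $w\in W_i=[a_{i+1},b_i]$ is adjacent to both $k$ and $l$.

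\textbf{Step 2: the monomial part equals $(x_{r,w}\mid r\in[m],\,w\in W_i)$.} For each $w\in W_i$, the path $k,w,l$ contributes all the variables $x_{r,w}$, which gives "$\supseteq$". Conversely, $W_i$ separates $G$: vertices $<a_{i+1}$ and vertices $>b_i$ never share a facet, so no edge joins them. Hence every path from $k$ to $l$ has an interior vertex in $W_i$, and so each path monomial $x_{i_1,k_1}\cdots x_{i_s,k_s}$ is divisible by some $x_{r,w}$ with $w\in W_i$. This separation step is the one I expect to need the most care, since the whole simplification rests on it.

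\textbf{Step 3: removing $W_i$ from $G^\dag_{\varepsilon_2}$.} Write $Q\coloneqq(x_{r,w}\mid r\in[m],\,w\in W_i)$. Every generator $p_{(\varepsilon_1',\varepsilon_2')}$ of $\calJ_{K_m,H}$ whose edge $\varepsilon_2'$ meets $W_i$ lies in $Q$. Therefore $\calJ_{K_m,H}+Q=\calJ_{K_m,H\setminus W_i}+Q$ for any graph $H$ on $[n]$. Applying this with $H=G^\dag_{\varepsilon_2}$, it remains to show $G^\dag_{\varepsilon_2}\setminus W_i=G\setminus W_i$. The new edges are pairs inside $N_G(k)$ or inside $N_G(l)$. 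After deleting $W_i$, these are pairs $u<v$ in $[c,a_{i+1}-1]$ (resp.\ in $[b_i+2,c']$). Take such $u<v\le k$ with $u$ adjacent to $k$. If $v=k$, the pair is already an edge. Otherwise condition (b) applied to $u<v<k$ gives $\{u,v\}\in E(G)$. The right side is symmetric. Combining the three steps with \Cref{thm:colon_edge} yields the stated formula.
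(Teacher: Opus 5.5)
Your proposal is correct and follows the same route as the paper. You specialize \Cref{thm:colon_edge} and use that $W_i$ separates $a_{i+1}-1$ from $b_i+1$ to collapse the path-monomial part to $(x_{k,u}\mid k\in[m],\,u\in W_i)$; then you absorb the generators meeting $W_i$ into that ideal and use condition (b) of closedness to show that the extra edges of $G^\dag_{\varepsilon_2}$ avoiding $W_i$ already lie in $G$. You also supply details the paper leaves implicit, namely the non-edge check and the interval form of the neighborhoods; for the non-edge check, the cleanest contradiction is that a facet $F_j\supseteq[a_{i+1}-1,b_i+1]$ cannot exist since $a_1<\cdots<a_t$ and $b_1<\cdots<b_t$ (your phrase ``strictly extend $F_i$ or $F_{i+1}$'' is looser than this).
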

\begin{proof}
    Notice that \(\varepsilon_2\) is a non-edge of \(G\), and every vertex of \(W_i\) connects \(a_{i+1}-1\) and \(b_{i}+1\). On the other hand, if \(z_0=a_{i+1}-1,z_1,\dots,z_k,z_{k+1}=b_{i}+1\) is a path in \(G\) connecting \(a_{i+1}-1\) and \(b_{i}+1\), one of \(z_1,\dots,z_{k}\) must lie in \(W_i\). Therefore, it follows from \Cref{thm:colon_edge} that 
    \begin{align*}
        (\calJ_{K_m,G}:p_{(\varepsilon_1,\varepsilon_2)}) & =\calJ_{K_m,G_{\varepsilon_2}^\dag}+(x_{k,u}\mid k\in [m],u\in W_i)\\
        &=\calJ_{K_m,G_{\varepsilon_2}^\dag\setminus W_i}+(x_{k,u}\mid k\in [m],u\in W_i).
    \end{align*}

    Let \(v_1,v_2\in N_G(a_{i+1}-1)\) be two distinct vertices such that \(v_1,v_2\notin W_i\). It is easy to check that \(v_1,v_2\le b_{i}\). Since \(G\) is a closed graph, this implies that \(\{v_1,v_2\}\in E(G)\).

    Symmetrically, if \(v_1,v_2\in N_G(b_i+1)\) are two distinct vertices such that \(v_1,v_2\notin W_i\), then \(\{v_1,v_2\}\in E(G)\).

    Therefore, 
    \[
        \calJ_{K_m,G_{\varepsilon_2}^\dag\setminus W_i}+(x_{k,u}\mid k\in [m],u\in W_i)=\calJ_{K_m,G\setminus W_i} + (x_{k,u}\mid k\in [m],u\in W_i).
        \qedhere
    \]
\end{proof}

We will derive the local \(\v\)-number of \(\calJ_{K_m,G}\) through some combinatorially constructed homogeneous polynomials.

\begin{Construction}
    \label{construct:L_T_non_CM}
    Let \(T\) be a non-empty cut set of the connected closed graph \(G\) which has a decomposition as in \Cref{prop:cutsetsclosed}.

    For each adjacent connected cut set \(W_{j_i}=[a_{j_i+1},b_{j_i}]\) and \(W_{j_{i+1}}=[a_{j_{i+1}+1},b_{j_{i+1}}]\) with \(1\le i\le s-1\), if \(b_{j_i+1} \le a_{j_{i+1}}\), we set \(\beta_i=b_{j_i+1}\) and \(\alpha_{i+1}=a_{j_{i+1}}\). Otherwise, \(b_{j_i+1}> a_{j_{i+1}}\). In this case, since $\max(W_{j_i})+1<\min(W_{j_{i+1}})$, we set 
    \(\beta_i=\alpha_{i+1}=\min([a_{j_{i+1}},b_{j_{i}+1}]\setminus T)\).
    Furthermore, we set \(\alpha_1=a_{j_1}\) and \(\beta_s=b_{j_s+1}\).

    Moreover, we have \([n]\setminus \bigsqcup_{i=1}^s [\alpha_i+1,\beta_i-1]= \bigsqcup_{i=0}^s [\beta_i,\alpha_{i+1}]\), where we further set \(\beta_0=1\) and \(\alpha_{s+1}=n\). Each \([\beta_i,\alpha_{i+1}]\) is contained in precisely one connected component of $G\setminus T$, and the induced graph \(\widehat{G}_i\) of \(G\) on it is again a closed graph. Let \(C_i\) be a reduced connected dominating set of \(\widehat{G}_i\) with minimal cardinality, i.e.,  \(\abs{C_i}=\gamma_c^{*}(\widehat{G}_i)\). Since \(\beta_i\) and \(\alpha_{i+1}\) are simplicial vertices of \(\widehat{G}_i\), i.e., they are in unique maximal cliques respectively, we must have \(\{\beta_i,\alpha_{i+1}\}\cap C_i=\emptyset\).

    Now, we are ready to introduce a simple graph \(L(T)\), which is a subgraph of a path graph. The vertex set of \(L(T)\) is given by \(\{\alpha_i,\beta_i\mid i\in [s]\}\sqcup (\bigsqcup_{i=0}^s C_i)\). At the same time, the edge set of \(L(T)\) is \(\{\{\alpha_i,\beta_i\}\mid i\in [s]\}\).
    Since those sets \(C_i\) only contribute to the isolated vertices, this graph is completely determined, up to isomorphism, by the cut set $T$. 
\end{Construction}

The graph \(L(T)\) in \Cref{construct:L_T_non_CM} is a disjoint union of several (non-degenerate) path graphs, with some isolated vertices. As usual, every isolated vertex can be considered as a degenerate path graph \(P_1\) of length \(0\).

\begin{Example}
    \label{exam:closed_graph_non_CM}
    Consider the closed graph \(G\) on the vertex set \([42]\), with maximal cliques
    \begin{align*}
        [1,4],[3,9],[6,10],[9,13],[12,16],[15,19],[18,21],[20,23],\\
        [21,24], [22,25],[23,28],[27,30],[29,34],[33,37],[36,40],[39,42].
    \end{align*}
    According to \Cref{prop:cutsetsclosed},  
    \[
        T=[3,4]\cup[9,10]\cup[12,13]\cup[15,16]\cup [29,30]\cup [33,34] 
    \]
    is a cut set of $G$ with \(s=6\). Furthermore, from \Cref{construct:L_T_non_CM}, we know that $\beta_0=1=\alpha_1,\beta_1=6=\alpha_2,
    \beta_2=11=\alpha_3,
    \beta_3=14=\alpha_4,
    \beta_4=19,
    \alpha_5=27,
    \beta_5=31=\alpha_6,
    \beta_6=37$, and $\alpha_7=42$.
    Therefore,
    \([42]\setminus \bigsqcup_{i=1}^6 [\alpha_i+1,\beta_i-1]=\{1\}\cup\{6\}
    \cup\{11\}
    \cup\{14\}
    \cup[19,27]
    \cup\{31\}
    \cup[37,42]\).
    At the same time, \(C_0=C_1=C_2=C_3=C_5=\emptyset\),\(C_4=\{21,24\}\), and \(C_6=\{40\}\).
    Hence, the graph \(L(T)\) consists of two maximal paths on \(\{1,6,11,14,19\}\) and \(\{27,31,37\}\), respectively, and three isolated vertices \(\{21,24,40\}\); see also \Cref{fig:L(T)_non_CM}.
    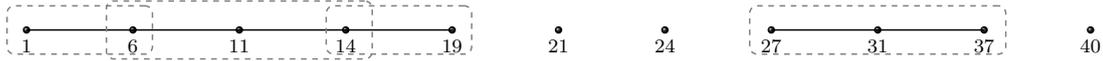
\begin{figure}[tbhp]
        \centering
        \scalebox{0.70}{
            \begin{tikzpicture}[thick, scale=1, every node/.style={scale=1.3}]
                \shade [shading=ball, ball color=black] (0,0) circle (.07) node [below] {\scriptsize$1$};
                \shade [shading=ball, ball color=black] (2,0) circle (.07) node [below] {\scriptsize$6$};
                \shade [shading=ball, ball color=black] (4,0) circle (.07) node [below] {\scriptsize$11$};
                \shade [shading=ball, ball color=black] (6,0) circle (.07) node [below] {\scriptsize$14$};
                \shade [shading=ball, ball color=black] (8,0) circle (.07) node [below] {\scriptsize$19$};
                \shade [shading=ball, ball color=black] (10,0) circle (.07) node [below] {\scriptsize$21$};
                \shade [shading=ball, ball color=black] (12,0) circle (.07) node [below] {\scriptsize$24$};
                \shade [shading=ball, ball color=black] (14,0) circle (.07) node [below] {\scriptsize$27$};
                \shade [shading=ball, ball color=black] (16,0) circle (.07) node [below] {\scriptsize$31$};
                \shade [shading=ball, ball color=black] (18,0) circle (.07) node [below] {\scriptsize$37$};
                \shade [shading=ball, ball color=black] (20,0) circle (.07) node [below] {\scriptsize$40$};

                \draw (0,0)--(8,0);
                \draw (14,0)--(18,0);

                \node [draw, rounded corners, fit={(2.5,0) (5.5,0)}, inner sep=12pt, dashed, gray] {};
                \node [draw, rounded corners, fit={(0.3,0) (1.7,0)}, inner sep=10pt, dashed, gray] {};
                \node [draw, rounded corners, fit={(6.3,0) (7.7,0)}, inner sep=10pt, dashed, gray] {};
                \node [draw, rounded corners, fit={(14.5,0) (17.5,0)}, inner sep=10pt, dashed, gray] {};
            \end{tikzpicture}
        }
        \caption{The associated graph $L(T)$}
        \label{fig:L(T)_non_CM}
    \end{figure}
\end{Example}

Next, we construct the homogeneous polynomial \(f_{\calP}\) from the graph \(L(T)\), which plays the role of \(f\) in the definition of the local \(\v\)-numbers.

\begin{Construction}
    \label{construct:f_P_non_CM}
    Consider a partition \(\calP\) of the edge set \(E(L(T))\), such that \(L(T)\) can be considered as a non-disjoint union of several shorter paths and isolated vertices. In this description, the union is only edge-disjoint, not vertex-disjoint. Furthermore, we require that each such path  has  length at most $m-1$. We shall call such a partition \emph{\(m\)-compatible}. Suppose that the set of vertices of the non-degenerate paths (which we shall call a \emph{slice}) after this partition is
    \[
        \{c_{i_p,1}<c_{i_p,2}<\cdots<c_{i_p,\ell_p}\}
    \]
    with \(2\le \ell_p\le m\) for each \(p\in [q]\), where $q$ is the number of slices.
    Furthermore, we introduce the polynomial
    \[
        f_{p}\coloneqq  \det \begin{pmatrix}
            x_{1,c_{i_p,1}} & x_{1,c_{i_{p},2}} & \cdots & x_{1,c_{i_{p},\ell_p}} \\
            x_{2,c_{i_p,1}} & x_{2,c_{i_{p},2}} & \cdots & x_{2,c_{i_{p},\ell_p}} \\
            \vdots & \vdots & & \vdots \\
            x_{\ell_p,c_{i_p,1}} & x_{\ell_p,c_{i_{p},2}} & \cdots & x_{\ell_p,c_{i_{p},\ell_p}} 
        \end{pmatrix}
    \]
    for each \(p\in [q]\), and set 
    \[
        {\bfm_i}\coloneqq \prod_{v\in C_i}x_{1,v}
    \]
    for each \(i=0,1,\dots,s\). 
    Note that if \(\widehat{G}_i\) is a complete graph, then \(C_i=\emptyset\) and \(\bfm_i=1\).

    Then, the expected polynomial associated to this partition is
    \[
        f_{\calP}\coloneqq 
        \left(\prod_{p=1}^q f_p\right)\cdot \left(\prod_{i=0}^s\bfm_i\right)=
        \left(\prod_{p=1}^q f_p\right)\cdot \left(\prod_{\substack{\text{\(v\) is an isolated}\\ \text{vertex of \(L(T)\)}}}x_{1,v}\right).
    \]
\end{Construction}

\begin{Example}
    Consider the graph \(L(T)\) in \Cref{exam:closed_graph_non_CM}. The following partition \(\calP\) of its edge set
    \[
        E(L(T))=\{\{1,6\}\}\sqcup \{\{6,11\},\{11,14\}\}\sqcup\{\{14,19\}\}\sqcup\{\{27,31\},\{31,37\}\}
    \]
    is \(3\)-compatible; see also \Cref{fig:L(T)_non_CM}. Thus, there are \(4\) slices with respect to this partition.  Since the isolated vertices of \(L(T)\) are \(21, 24, 40\), the initial monomial of the polynomial \(f_{\calP}\) is given by
    \[
        (x_{1,1}x_{2,6})(x_{1,6}x_{2,11}x_{3,14})(x_{1,14}x_{2,19})x_{1,21}x_{1,24}(x_{1,27}x_{2,31}x_{3,37})x_{1,40}.
    \]
\end{Example}

In the following, we show that the polynomial just defined fulfills the  required role with respect to the colon ideals.

\begin{Lemma}
    \label{lem:closed_f_works}
    The  polynomial \(f_{\calP}\) defined in \Cref{construct:f_P_non_CM} satisfies 
    \begin{equation}
        (\calJ_{K_m,G}:f_{\calP})=\frakP_T(K_m,G).
        \label{eqn:closed_f_works}
    \end{equation}
\end{Lemma}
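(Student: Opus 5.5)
We prove the two inclusions of \eqref{eqn:closed_f_works} separately, using the prime decomposition $\calJ_{K_m,G}=\bigcap_{T'\in\calC(G)}\frakP_{T'}(K_m,G)$. Since $\calJ_{K_m,G}$ is radical, $(\calJ_{K_m,G}:f_{\calP})=\bigcap_{T'}(\frakP_{T'}(K_m,G):f_{\calP})$. Each term is either $S$, when $f_{\calP}\in\frakP_{T'}$, or $\frakP_{T'}$ itself. So the equality reduces to two claims:
\begin{enumerate}[1]
\item $f_{\calP}\notin \frakP_T(K_m,G)$;
\item for every $T'\in\calC(G)$ with $\frakP_{T'}\not\supseteq\frakP_T$, we have $f_{\calP}\in\frakP_{T'}(K_m,G)$.
\end{enumerate}
This is cleaner than directly proving $x_{k,u}f_{\calP}\in\calJ_{K_m,G}$ for $u\in T$ and $p f_{\calP}\in \calJ_{K_m,G}$ for the binomials of the components. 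Claim 2 suffices because the primes containing $\frakP_T$ contribute nothing new: minimal primes are incomparable, so any such $\frakP_{T'}$ equals $\frakP_T$.

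For claim 1, we use that $\frakP_T$ is the sum of the variable ideal on $T$ and determinantal-type primes on the components of $G\setminus T$. By \Cref{construct:L_T_non_CM}, no vertex of $L(T)$ lies in $T$: the $\alpha_i,\beta_i$ are chosen outside $T$, and $C_i\subseteq[\beta_i,\alpha_{i+1}]$. Moreover, the vertices of one slice $c_{i_p,1}<\dots<c_{i_p,\ell_p}$ lie pairwise in different components of $G\setminus T$. Indeed, consecutive vertices $\alpha_i,\beta_i$ are separated by $W_{j_i}$, and when $\beta_i=\alpha_{i+1}$ the successive edges cross different cut sets. Hence, modulo $\frakP_T$, the matrix of $f_p$ has columns living in distinct components. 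We then specialize to a point of $V(\frakP_T)$ that is generic rank one on each component, with the row vectors of distinct components in general position. The maximal minor $f_p$ is then nonzero, because $\ell_p\le m$ and the columns come from distinct components. The variables $x_{1,v}$ for $v$ outside $T$ are nonzero as well. Since $\frakP_T$ is prime and each factor survives, $f_{\calP}\notin\frakP_T$.

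For claim 2, take $T'\ne T$ a cut set, written as a union of connected cut sets $W_j$ by \Cref{prop:cutsetsclosed}. There are two cases.
\begin{enumerate}[a]
\item \emph{$T'$ contains some $W_j\not\subseteq T$.} We need a factor of $f_{\calP}$ lying in $\frakP_{T'}$. The natural candidate is a variable $x_{1,v}$ with $v\in C_i\cap T'$, or a column of some $f_p$ indexed by a vertex in $T'$. If neither exists, we argue that two columns of some $f_p$ (or the whole structure) lie in a single component of $G\setminus T'$. Then, modulo $\frakP_{T'}$, those columns satisfy the rank-one relations, and the minor vanishes because the matrix has rank $<\ell_p$ on each block. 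Here we use the choice of $C_i$ as a reduced connected dominating set of $\widehat G_i$, together with the closed-graph interval structure: any $W_j$ inside $[\beta_i,\alpha_{i+1}]$ must meet $C_i$ (as in the argument of \Cref{lem:minimal_CDS}), or else be avoided only by paths through the endpoints.
\item \emph{$T'\subsetneq T$ (including $T'=\emptyset$).} Some $W_{j_i}\subseteq T$ is missing from $T'$, so $\alpha_i$ and $\beta_i$ lie in the same component of $G\setminus T'$. The slice containing the edge $\{\alpha_i,\beta_i\}$ then has two columns in one component block. The minor $f_p$ reduces modulo $\frakP_{T'}$ to a determinant whose columns span a space of dimension at most the number of components met, which is less than $\ell_p$. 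So $f_p\in\frakP_{T'}$.
\end{enumerate}
The rank count relies on the fact that $\calJ_{K_m,\widetilde{H}}$ is the ideal of $2$-minors, so columns within one component are proportional modulo it.

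The main obstacle is case (a): showing that every cut set $T'$ not contained in $T$ kills some factor. This needs a careful combinatorial analysis of where a $W_j\not\subseteq T$ sits, relative to the intervals $[\alpha_i,\beta_i]$ and $[\beta_i,\alpha_{i+1}]$. When $W_j$ sits strictly inside some $[\alpha_i+1,\beta_i-1]$, one uses that $\alpha_i,\beta_i$ are chosen as extreme endpoints ($a_{j_i}$, $b_{j_i+1}$, or the minimal vertex not in $T$). This forces either $W_j\ni\alpha_i$ or $\beta_i$, or $W_j$ disconnects nothing new, contradicting $T'$ being a cut set. When $W_j\subseteq[\beta_i,\alpha_{i+1}]$, it must contain a vertex of $C_i$, by minimality and domination. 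I would first try to prove the auxiliary statement that every connected cut set of $G$ not contained in $T$ meets the vertex set of $L(T)$. Case (a) then follows immediately by the column-vanishing argument.
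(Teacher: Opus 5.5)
Your argument is correct, and it takes a genuinely different route from the paper.

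\textbf{How the two routes differ.} The paper proves $f_{\calP}\notin\frakP_T(K_m,G)$ via initial monomials. It then checks $f_{\calP}\,\frakP_T(K_m,G)\subseteq\calJ_{K_m,G}$ generator by generator:
\begin{enumerate}[i]
\item for $x_{k,v}$ with $v\in W_{j_i}$, it Laplace-expands $f_{\calP}$ along the columns $\alpha_i,\beta_i$ and combines $\{\alpha_i,\beta_i\}\in E(G_v)$ with \Cref{thm:colon_x};
\item for a binomial $[k_1,k_2|u,v]$ inside a component, it routes a $u$--$v$ path through vertices of $L(T)$ and applies \Cref{cor:colon}.
\end{enumerate}
You instead use $\calJ_{K_m,G}=\bigcap_{T'}\frakP_{T'}(K_m,G)$ and show that $f_{\calP}$ lies in every minimal prime other than $\frakP_T$. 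This turns the lemma into two combinatorial statements. First, each $W_{j_i}\subseteq T$ is bridged by the edge $\{\alpha_i,\beta_i\}$; this is your case (b), where Laplace expansion along these two columns gives $f_p\in\frakP_{T'}$ directly. Second, every connected cut set $W_j\not\subseteq T$ meets $V(L(T))$; this is your case (a). The two routes are dual: the paper builds connecting paths, you hit separators.

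\textbf{What each route buys.} The paper's route needs only that $\frakP_T$ is a prime containing $\calJ_{K_m,G}$, plus the colon formulas. Yours needs the full prime decomposition and \Cref{prop:cutsetsclosed}, but no colon computations, and it automatically uses all vertices of $L(T)$. That last point is a real advantage. The paper's step (ii) asserts that the isolated vertices of $L(T)$ in a component $G'$ form a reduced connected dominating set of $G'$, and this fails in general. For example, take maximal cliques $[1,2],[2,4],[4,5],[5,6],[6,7]$ and $T=\{2\}$: the vertex $3$ has no neighbour among the isolated vertices $5,6$. The step has to be repaired by letting paths pass through $\beta_i$ or $\alpha_{i+1}$, which are columns of slice determinants.

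\textbf{Completing case (a).} The one step you leave as a plan is the auxiliary statement, and it is true with a short proof.
\begin{enumerate}[1]
\item The intervals $[\beta_0,\alpha_1],[\alpha_1,\beta_1],[\beta_1,\alpha_2],\dots,[\beta_s,\alpha_{s+1}]$ tile $[n]$, and $W_j$ never contains $1$ or $n$. So either $W_j$ contains some $\alpha_i$ or $\beta_i$, or it lies strictly inside one tile.
\item $W_j$ cannot lie strictly inside $[\alpha_i,\beta_i]$ when $j\ne j_i$. If $j<j_i$ then $\min W_j=a_{j+1}\le a_{j_i}\le\alpha_i$, and if $j>j_i$ then $\max W_j=b_j\ge b_{j_i+1}\ge\beta_i$. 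So this sub-case of your sketch is simply vacuous; it does not contradict $T'$ being a cut set.
\item If $W_j$ lies strictly inside $[\beta_i,\alpha_{i+1}]$, no edge of $G$ joins a vertex $<a_{j+1}$ to a vertex $>b_j$. Such an edge would lie in some $F_k$ with $k\le j$ yet $b_k>b_j$, which is impossible. Hence $W_j$ separates $\beta_i$ from $\alpha_{i+1}$ in $\widehat G_i$.
\item The $\beta_i$--$\alpha_{i+1}$ path with interior in $C_i$ must therefore meet $W_j$ at a vertex of $C_i$.
\end{enumerate}
With this in hand, your fallback that ``two columns of some $f_p$ lie in one component of $G\setminus T'$'' is unnecessary.

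\textbf{Minor point in claim 1.} The vectors that must be in general position are the common column directions $y_c\in\KK^m$ of the rank-one blocks, not row vectors. Extend $\KK$ to an infinite field if needed to find such a point.
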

\begin{proof}
    Since \(\frakP_T(K_m,G)\) is a prime ideal containing \(\calJ_{K_m,G}\), it suffices to show that \(f_{\calP}\notin \frakP_T(K_m,G)\) and \(f_{\calP}\frakP_T(K_m,G)\subseteq \calJ_{K_m,G}\). Once we have shown these, we will have
    \[
        \frakP_T(K_m,G)\subseteq (\calJ_{K_m,G}:f_{\calP})\subseteq (\frakP_{T}(K_m,G):f_{\calP})=\frakP_T(K_m,G),
    \]
    which implies that \((\calJ_{K_m,G}:f_{\calP})=\frakP_T(K_m,G)\).

    First,
    we show that \(f_{\calP}\notin \frakP_T(K_m,G)\). Suppose for
    contradiction that \(f_{\calP}\in \frakP_T(K_m,G)\). Since the linear
    generators of \(\frakP_T(K_m,G)\) take the form \(x_{i,v}\) with \(i\in[m]\)
    and \(v\in T\), while the ``support'' \(C_i\) of each \(\bfm_i\) does not 
    intersect
    \(T\), this implies that \(\prod_{0\le i\le s}\bfm_i\notin
    \frakP_T(K_m,G)\). Therefore, \(f_p\in \frakP_T(K_m,G)\) for some \(p\in
    [q]\). In particular, the initial monomial
    \(\ini(f_p)=x_{1,c_{i_p,1}}x_{2,c_{i_{p},2}}\cdots
    x_{\ell_p,c_{i_{p},\ell_p}}\) belongs to \(\ini(\frakP_T(K_m,G))\). This
    implies that either some \(x_{i,u}\) divides \(\ini(f_p)\) with \(i\in
    [m]\) and \(u\in T\), or some \(x_{i,u}x_{j,v}\) divides
    \(\ini(f_{p})\) with \(u,v\) in the same connected component of
    \(G\setminus T\) and \(i\ne j\) in \([m]\). However, since the vertices in
    \(\{c_{i_p,1},c_{i_p,2},\dots,c_{i_p,\ell_p}\}\) belong to pairwise
    different connected components of \(G\setminus T\), this is impossible.
    Therefore, \(f_{\calP}\notin \frakP_T(K_m,G)\).

    Second, we show that \(f_{\calP}\frakP_T(K_m,G)\subseteq \calJ_{K_m,G}\).
    We will take an arbitrary canonical generator \(g\) of
    \(\frakP_{T}(K_m,G)\), and show that \(g f_{\calP} \in \calJ_{K_m,G}\).
    There are two cases.
    \begin{enumerate}[i]
        \item Suppose that \(g=x_{k_0,v}\) with \(k_0\in [m]\) and \(v\in W_{j_i}\subseteq T\). For this index $i$, since \(\{\alpha_i,\beta_i\}\in E(L(T))\),
            \(f_{\calP}\) is an \(S\)-linear combination of
            \([k_1,k_2|\alpha_i,\beta_i]\) with \(1\le k_1<k_2 \le m\), due to
            the Laplace Expansion Theorem for determinants. Notice that \(\{\alpha_i,\beta_i\}\in E(G_{v})\).
            It follows from \Cref{thm:colon_x} that
            \(x_{k_0,v}[k_1,k_2|\alpha_i,\beta_i]\in \calJ_{K_m,G}\).
            Therefore, \(g f_{\calP}\in \calJ_{K_m,G}\).
        \item Suppose that \(g=[k_1,k_2|u,v]\) where \(1\le k_1<k_2\le m\), and
            \(u,v\) are two distinct vertices of the same connected component,
            say, \(G'\), of \(G\setminus T\). Notice that \(G'\) is still a
            connected closed graph. We shall show that
            \(f_{\calP}[k_1,k_2|u,v]\in \calJ_{K_m,G}\). Without loss of
            generality, we may assume that \(u<v\) and \(\{u,v\}\notin E(G)\),
            i.e., \(\{u,v\}\notin E(G')\). Note that the isolated vertices of
            \(L(T)\) in \(G'\) form a reduced connected dominating set of \(G'\). Thus,
            the vertices \(u\) and \(v\) are connected via the path
            \(u=\tau_0,\tau_1,\dots,\tau_s,\tau_{s+1}=v\) in \(G'\), where
            \(\tau_1,\dots,\tau_{s}\) are all isolated vertices in \(L(T)\).
            Notice that \(f_{\calP}\) is an \(S\)-linear combination of the
            monomials \(x_{h_1,\tau_1}\cdots x_{h_s,\tau_s}\) with
            $h_1,\dots,h_s\in [m]$. Furthermore, \(x_{h_1,\tau_1}\cdots
            x_{h_s,\tau_s} [k_1,k_2|u,v]\in \calJ_{K_m,G'}\subseteq
            \calJ_{K_m,G}\), by \Cref{cor:colon}. Therefore,
            \(f_{\calP}[k_1,k_2|u,v]\in \calJ_{K_m,G}\). 
    \end{enumerate} 
    Since we have verified the two expected conditions, we conclude that equality holds in \eqref{eqn:closed_f_works}.
\end{proof}

The following result, which is the main result of this section, computes the local \(\v\)-number of \(\calJ_{K_m,G}\) with \(m=2\).
Notice that every edge of \(L(T)\) has cardinality \(2\).
Therefore, every \(2\)-compatible partition of \(E(L(T))\) is trivial: in this partition, every equivalence class contains precisely one edge.
Let \(\calP_0\) be this unique \(2\)-compatible partition.
By abuse of notation, we write \(f_T\) for the polynomial \(f_{\calP_0}\).
We will show that the local \(\v\)-number of \(\calJ_{K_2,G}\) is determined by the degree of the polynomial \(f_{T}\).

Recall that with respect to the lexicographic order on the polynomial ring \(S\), the initial ideal of \(\calJ_{K_2,G}\) is given by
\[
(x_{1,k}x_{2,l} \mid 1\le k<l\le n \text{ such that } \{k,l\}\in E(G)),
\]
as shown in \cite[Theorem 1.3]{MR3290687}.

\begin{Theorem}
    \label{thm:local_v_number_non_CM_closed}
    The local \(\v\)-number with respect to the cut set \(T\) of \(G\) is given by \(\deg(f_T)\).
\end{Theorem}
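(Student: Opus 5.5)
The upper bound $\v_T(\calJ_{K_2,G})\le \deg(f_T)$ is immediate from \Cref{lem:closed_f_works} applied with $m=2$ and the partition $\calP_0$. The substance is the reverse inequality. Here
\[
\deg(f_T)=2\,\#\{\text{edges of }L(T)\}+\sum_{i=0}^s\gamma_c^*(\widehat G_i)=2s+\sum_{i=0}^s\abs{C_i}.
\]
So I must show that any homogeneous $g$ with $(\calJ_{K_2,G}:g)=\frakP_T(K_2,G)$ has degree at least $2s+\sum_i\abs{C_i}$.

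By \Cref{lem:change_initial}, I may assume that $u:=\ini(g)\notin\ini(\calJ_{K_2,G})$. Then $u$ is a standard monomial: it contains no product $x_{1,k}x_{2,l}$ with $k<l$ and $\{k,l\}\in E(G)$.

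The strategy is to extract, from the condition $g\frakP_T\subseteq\calJ_{K_2,G}$, combinatorial constraints on the support of $u$. I will do this by passing to initial terms and using the structure of the Gröbner basis of $\calJ_{K_2,G}$ for the closed graph $G$.

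\textbf{Constraints from the binomials.} Fix a component $\widehat G_i$ and two non-adjacent vertices $a<b$ in the same component of $G\setminus T$. Since $g[1,2|a,b]\in\calJ_{K_2,G}$, I want to show that the variables of $u$ with column index strictly between $a$ and $b$ must meet every minimal path from $a$ to $b$. The natural tool is a degeneration or specialization argument: map each variable $x_{k,v}$ with $v\notin$ the support of $u$ to zero, or use a weight order refining lex. This should show that the set of columns appearing in $u$ and lying in $[\beta_i,\alpha_{i+1}]$ contains a reduced connected dominating set of $\widehat G_i$. Since $\beta_i$ and $\alpha_{i+1}$ are simplicial, that set has size at least $\abs{C_i}$.

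An alternative route is to show directly that
\[
(\calJ_{K_2,G}:u')\supseteq[1,2|\beta_i,\alpha_{i+1}]
\]
for a suitable portion $u'$ of $u$. By the proof pattern of \Cref{lem:minimal_CDS} (the greedy path $c_j'\le c_j$), this forces at least $\gamma_c^*(\widehat G_i)$ interior columns.

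\textbf{Constraints from the variables.} For each $v\in W_{j_i}$, the condition $x_{k,v}g\in\calJ_{K_2,G}$ says $g\in\calJ_{K_2,G_v}$ by \Cref{thm:colon_x}. In $G_v$, the vertices $\alpha_i$ and $\beta_i$, which lie on opposite sides of $W_{j_i}$ in adjacent cliques, become adjacent. Since $g\notin\calJ_{K_2,G}$, I expect $u$ to be forced to contain a pair $x_{1,k}x_{2,l}$ with $k<l$ and $\{k,l\}\in E(G_v)\setminus E(G)$ for all $v\in W_{j_i}$ simultaneously. Such $k,l$ lie in $[\alpha_i,\beta_i]\setminus W_{j_i}$, on the two sides of $W_{j_i}$.

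This yields two variables per cut piece $W_{j_i}$. These variables must be counted disjointly from one another and from the $C_i$-contributions. The disjointness of the intervals $[\alpha_i+1,\beta_i-1]$ from $\bigsqcup[\beta_i,\alpha_{i+1}]$ handles most of this. The shared endpoints $\beta_i=\alpha_{i+1}$ are the delicate case: there a single column could serve two edges. However, a standard monomial cannot use the same column in both rows in a way that would collapse the count, since row indices $1<2$ are forced by $k<l$. Summing over all pieces then gives $\deg u\ge 2s+\sum_i\abs{C_i}$.

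\textbf{Main obstacle.} The hard step is making the ``forced pair'' argument rigorous. Membership $g\in\calJ_{K_2,G_v}$ with $u$ standard for $\calJ_{K_2,G}$ only says that $u$ reduces modulo the Gröbner basis of $\calJ_{K_2,G_v}$, which is again closed, so its initial ideal is known. I would first try to show that $u\in\ini(\calJ_{K_2,G_v})$ for every $v\in W_{j_i}$. That gives a divisor $x_{1,k}x_{2,l}$ with $\{k,l\}\in E(G_v)\setminus E(G)$. I would then argue via the interval structure of cliques, together with the fact that $G_{W_{j_i}}$ restricted near $W_{j_i}$ behaves like $G_v$, that one such pair works for the whole $W_{j_i}$. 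The care needed is in accounting for overlaps between consecutive cut pieces when $b_{j_i+1}>a_{j_{i+1}}$.
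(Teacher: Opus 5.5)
Your outline matches the paper's: the upper bound from \Cref{lem:closed_f_works}, reduction to a standard monomial $u=\ini(g)$ via \Cref{lem:change_initial}, then $|C_i|$ variables from each $\widehat G_i$ and two from each $W_{j_i}$, counted disjointly. However, both extraction steps are left as expectations, and together they are the whole lower bound.

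\textbf{The $\widehat G_i$ part.} Neither of your routes is an argument. The claim that the columns of $u$ contain a reduced connected dominating set of $\widehat G_i$ is only asserted; ``specialization'' or a refined weight order is not a mechanism. The alternative route needs $[1,2|\beta_i,\alpha_{i+1}]\in(\calJ_{K_2,G}:u')$ for a monomial divisor $u'$ of $u$, and nothing provides this. The hypothesis reaches $u$ only through $u\in\ini(\calJ_{K_2,G}:p)$ for $p\in\frakP_T(K_2,G)$.

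The missing idea is to use $\gamma_c^{*}(\widehat G_i)$ separate binomials, one per step of the greedy path of \Cref{lem:minimal_CDS} run inside $\widehat G_i$. If $\widehat c_k=\max(\widehat F_{j_k})$, set $\varepsilon_k=\{\min(\widehat F_{j_k+1})-1,\max(\widehat F_{j_k})+1\}$ and $\widehat W_k=[\min(\widehat F_{j_k+1}),\max(\widehat F_{j_k})]$.
\begin{itemize}
\item The endpoints of $\varepsilon_k$ lie in $\widehat G_i$, so $p_{(\{1,2\},\varepsilon_k)}\in\frakP_T(K_2,G)$.
\item \Cref{lem:colon_edge_short} then gives $g\in\calJ_{K_2,G\setminus\widehat W_k}+(x_{j,w}\mid j\in[2],\ w\in\widehat W_k)$.
\item Since $u$ is standard, some variable of $u$ has its column in $\widehat W_k$.
\end{itemize}
The greedy choice makes the sets $\widehat W_k$ pairwise disjoint subsets of $[\beta_i+1,\alpha_{i+1}-1]$. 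This gives $|C_i|$ distinct variables without any connectivity statement.

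\textbf{The cut pieces.} The step you call the main obstacle is immediate. From $g\in\calJ_{K_2,G_v}$ we get $u\in\ini(\calJ_{K_2,G_v})$, and $G_v$ is closed in the same labeling. Hence $x_{1,k}x_{2,l}\mid u$ for some non-edge $\{k,l\}$ of $G$ inside $N_G(v)$.

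What is genuinely missing is the localization of these pairs. Your assertion that they lie in $[\alpha_i,\beta_i]\setminus W_{j_i}$ does not follow from $\{k,l\}\in E(G_v)\setminus E(G)$, even for all $v\in W_{j_i}$ simultaneously. For example, let $G$ have maximal cliques $[1,3],[2,6],[5,7]$ and let $T=\{2,3,5,6\}$. Then $\{2,7\}$ is a non-edge inside $N_G(5)\cap N_G(6)$, yet $2\in T$ and $2<\alpha_2=4$.

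The paper excludes such pairs by repeatedly exploiting that $u$ is standard.
\begin{enumerate}
\item If $x_{1,\tilde v}\mid u$ with $\tilde v\in W_{j_i}$, the same fact applied to $\tilde v$ gives $x_{2,w}\mid u$ with $w>\tilde v$ and $\{\tilde v,w\}\in E(G)$. Then $u\in\ini(\calJ_{K_2,G})$, a contradiction, so no forced column lies in $W_{j_i}$.
\item Set $w_i'=\max_v w'_{i,v}$ and $w_i''=\min_v w''_{i,v}$; the extreme choices $v=\max(W_{j_i})$ and $v=\min(W_{j_i})$ supply the outer bounds. This gives $x_{1,w_i'}x_{2,w_i''}\mid u$ with $\min(F_{j_i})\le w_i'<\min(W_{j_i})$ and $\max(W_{j_i})<w_i''\le\max(F_{j_i+1})$.
\item If $w_i'\le\max(W_{j_{i-1}})$, then $w_i'$ and $w_{i-1}''>w_i'$ both lie in $F_{j_i}$, so $x_{1,w_i'}x_{2,w_{i-1}''}\in\ini(\calJ_{K_2,G})$ divides $u$, a contradiction. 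Hence $\max(W_{j_{i-1}})<w_i'$, and symmetrically $w_i''<\min(W_{j_{i+1}})$.
\end{enumerate}
Step 3 is exactly what excludes $\{2,7\}$ in the example.

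Only with these positional bounds does your row-index remark at shared endpoints $\beta_i=\alpha_{i+1}$ yield $2s$ variables distinct from one another and from the $\widehat W_k$-variables. That is what gives $\deg(u)\ge 2s+\sum_i|C_i|$.
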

\begin{proof}
    Let \(f\) be a homogeneous polynomial in \(S\) such that
    \((\calJ_{K_2,G}:f)=\frakP_T(K_2,G)\). 
    By \Cref{lem:closed_f_works}, it
    suffices to show that 
    \begin{equation}
        \deg(f_{T})\le \deg(f).
        \label{eqn:f_ft}
    \end{equation}
    Furthermore, by \Cref{lem:change_initial}, we may assume without loss of generality that \(\ini(f)\notin \ini(\calJ_{K_2,G})\).  To establish the inequality in \eqref{eqn:f_ft}, it is critical to observe that the factors of \(\ini(f)\) stem from two distinct cases, which form the core of our analysis.

    \begin{enumerate}[a]
        \item Consider the induced subgraph \(\widehat{G}_i\) of $G$ defined on the vertex set \([\beta_i,\alpha_{i+1}]\), where \(0\le i\le s\). As noted earlier, this subgraph is also a connected closed graph. Let \(\widehat{F}_1,\dots,\widehat{F}_p\) denote all the maximal cliques of \(\widehat{G}_i\). Note that the restriction of any maximal clique of \(G\) to \(\widehat{G}_i\) forms a clique of \(\widehat{G}_i\), though not necessarily a maximal one.  Without loss of generality, we may assume that \(\min(\widehat{F}_1) <\min(\widehat{F}_2) <\cdots<\min(\widehat{F}_p)\).

            The vertex \(\widehat{c}_0\coloneqq \beta_i\) is a simplicial vertex and only belongs to the maximal clique \(\widehat{F}_1\). We will set \(\widehat{c}_1\coloneqq \max(\widehat{F}_1)\). In general, if \(\widehat{c}_k\) has been defined and \(\widehat{c}_k\ne \alpha_{i+1}\), set \(\widehat{c}_{k+1}\coloneqq \max\{\max(\widehat{F}_j)\mid \widehat{c}_k\in \widehat{F}_j\}\).  It follows from \Cref{lem:minimal_CDS} that the path \(\widehat{c}_0= \beta_i,\widehat{c}_1,\dots, \widehat{c}_{\gamma_c^{*}(\widehat{G}_i)+1}=\alpha_{i+1}\)  is the shortest path connecting   
            \(\beta_i\) and \(\alpha_{i+1}\).  Furthermore, \(\{\widehat{c}_1, \dots, \widehat{c}_{\gamma_c^{*}(\widehat{G}_i)}\}\) is a reduced connected dominating set of minimal cardinality of \(\widehat{G}_i\). 

            For each \(\widehat{c_k}\) with \(1\le k\le \gamma_c^{*}(\widehat{G}_i)\), suppose that \(\widehat{c}_k=\max(\widehat{F}_{j_k})\). Let \(\widehat{f}_k=p_{(\{1,2\},\varepsilon_k)}\) be the binomial associated to the pair 
            \(\{1,2\}\in E(K_2)\) and \(\varepsilon_k=\{\min(\widehat{F}_{j_k+1})-1, \max(\widehat{F}_{j_k})+1\}\in E(H)\), where $H$   is the complete graph  on the union of the vertex sets of $\widehat{F}_{j_k}$ and  $\widehat{F}_{j_k+1}$. 
            Set \(\widehat{W}_k\coloneqq [\min(\widehat{F}_{j_k+1}), \max(\widehat{F}_{j_k})]\); this set is a connected cut set of \(\widehat{G}_i\). Clearly, \(\widehat{W}_k\) is also a cut set of \(G\), and hence a connected cut set of \(G\). It follows from \Cref{lem:colon_edge_short} that 
            \[
                (\calJ_{K_2,G}:\widehat{f}_k)=  \calJ_{K_2,G\setminus \widehat{W}_k}+(x_{j,u}\mid j\in[2],u\in \widehat{W}_k).
            \]
            Since the vertices of \(\varepsilon_k\) belong to \(\widehat{G}_i\), a connected component of \(G\setminus T\), \(\widehat{f}_k\in \frakP_T(K_2,G)\).  As a result, 
            \(f\in  (\calJ_{K_2,G}:\frakP_T(K_2,G)) \subseteq (\calJ_{K_2,G}:\widehat{f}_k)\), which in turn implies that 
            \[
                \ini(f)\in \ini(\calJ_{K_2,G\setminus \widehat{W}_k})+(x_{j,u}\mid j\in[2],u\in \widehat{W}_k).
            \]
            Given that \(\ini(\calJ_{K_2,G\setminus \widehat{W}_k})\subseteq
            \ini(\calJ_{K_2,G})\),  while \(\ini(f)\notin
            \ini(\calJ_{K_2,G})\), it follows that \(\ini(f)\) is divisible
            by some \(z_k\in \{x_{j,u}\mid j\in[2], u\in \widehat{W}_k\}\).

            Notice that by the maximality condition defining \(\widehat{c}_{k}\), we have
            \[
                \min(\widehat{F}_{j_k})\le
                \widehat{c}_{k-1}=\max(\widehat{F}_{j_{k-1}})<\min(\widehat{F}_{j_k+1})\le
                \min(\widehat{F}_{j_{k+1}}).
            \]
            Therefore,
            \(\widehat{W}_{1},\dots,\widehat{W}_{\gamma_c^{*}(\widehat{G}_i)}\) are
            mutually disjoint. This implies that \(\ini(f)\) is divisible by
            \(\widehat{\bfm}_i\coloneqq \lcm\{z_k\mid k\in
            [\gamma_c^{*}(\widehat{G}_i)]\}=\prod_{k\in
        \gamma_c^{*}(\widehat{G}_i)}z_k\). Notice that
        \(\deg(\widehat{\bfm}_i)=\gamma_c^{*}(\widehat{G}_i)=\deg(\bfm_i)\), and
        \(\bigcup_{k\in [\gamma_c^{*}(\widehat{G}_i)]} \widehat{W}_{k}\subseteq
        [\beta_i+1,\alpha_{i+1}-1]\).

    \item 
        Let \(W_{j_i}\subseteq T\) be a connected cut set. For every vertex \(v\in W_{j_i}\), since \(x_{1,v}\in \frakP_T(K_2,G)\), we have \(f\in (\calJ_{K_2,G}:x_{1,v})=\calJ_{K_2,G_{v}}\) by \Cref{thm:colon_x}.  Notice that \(G_{v}\) is a connected closed graph with \(E(G_{v})=E(G)\cup\{\{w',w''\}\mid w',w''\in N_G(v)\}\). Given that \(\ini(f)\notin \ini(\calJ_{K_2,G})\), we infer that there exist suitable \(w_{i,v}',w_{i,v}''\in N_G(v)\) such that \(\{w_{i,v}',w_{i,v}''\}\notin E(G)\) and \(\ini(f)\) is divisible by \(x_{1,w_{i,v}'}x_{2,w_{i,v}''}\).  We claim that neither \(w_{i,v}'\) nor \(w_{i,v}''\) belongs to \(W_{j_i}\) for all \(v\in W_{j_i}\). 

        Suppose for contradiction that \(w_{i,v}' \in W_{j_i}\) for some \(v \in W_{j_i}\).  Let \(\widetilde{v} = w_{i,v}'\). Then, \(\ini(f)\) is divisible by \(x_{1,\widetilde{v}}\).  Since \(\widetilde{v} \in W_{j_i}\), the elements \(w_{i,\widetilde{v}}'\) and \(w_{i,\widetilde{v}}''\) are also defined, with \(\widetilde{v} < w_{i,\widetilde{v}}''\) and \(\ini(f)\) divisible by \(x_{2,w_{i,\widetilde{v}}''}\).  In particular, \(\ini(f)\) is divisible by \(x_{1,\widetilde{v}}x_{2,w_{i,\widetilde{v}}''}\).  As \(\widetilde{v}\) is adjacent to \(w_{i,\widetilde{v}}''\) in \(G\) and \(\ini(f)\) is divisible by \(x_{1,\widetilde{v}}x_{2,w_{i,\widetilde{v}}''}\), we have \(\ini(f) \in \ini(\calJ_{K_2,G})\), a contradiction. Thus the claim holds, and neither \(w_{i,v}'\) nor \(w_{i,v}''\) belongs to \(W_{j_i}\).

        Set \(w_i'\coloneqq \max\{w_{i,v}'\mid v\in W_{j_i}\}=w_{i,v_1}'\) and \(w_i''\coloneqq\min\{w_{i,v}''\mid v\in W_{j_i}\}=w_{i,v_2}''\) with $v_1,v_2\in W_{j_i}$.
        It follows that
        \(\ini(f)\) is divisible by 
        both $x_{1,w_{i,v_1}'}x_{2,w_{i,v_1}''}$ and $x_{1,w_{i,v_2}'}x_{2,w_{i,v_2}''}$. Therefore, $\ini(f)$ is divisible by
        \(x_{1,w_{i,v_1}'}x_{2,w_{i,v_2}''}=x_{1,w_i'}x_{2,w_i''}\).  Furthermore, it
        follows from the discussion in the last paragraph that
        \(w_i'<\min(W_{j_i})\le\max(W_{j_i})<w_i''\).  Notice that when
        \(v=\max(W_{j_i})\), it is easy to see that \(w_{i,v}'\ge
        \min(F_{j_i})\). Hence, \(w_i'\ge \min(F_{j_i})\).
        Similarly, \(w_i''\le \max(F_{j_i+1})\). 

        We claim in addition that \(w_i'>\max(W_{j_{i-1}})\). Suppose for
        contradiction that \(w_i'\le \max(W_{j_{i-1}})\). Notice that
        \(\ini(f)\) is divisible by \(x_{1,w_i'}x_{2,w_{i-1}''}\), and
        \(\min(F_{j_i})\le w_i'\le \max(W_{j_{i-1}})<w_{i-1}''\le
        \max(F_{j_{i-1}+1})\le \max(F_{j_i})\). This implies that \(w_i'\) and
        \(w_{i-1}''\) are two distinct vertices in the common clique
        \(F_{j_i}\). In particular, \(\ini(f)\in \ini(\calJ_{K_2,G})\), a
        contradiction. Similarly, we will have \(w_i''<\min(W_{j_{i+1}})\).

        In short, we have 
        \begin{equation}
            \max(W_{j_{i-1}})<w_i'<\min(W_{j_i})\le\max(W_{j_i})<w_i''<\min(W_{j_{i+1}}).
            \label{eqn:ws_positions}
        \end{equation}
    \end{enumerate}

    Let us turn to the graph \(L(T)\) in \Cref{construct:L_T_non_CM}.  Recall that \(L(T)\) is a disjoint union of several non-degenerate path graphs and some isolated vertices. In the following arguments, any such non-degenerate path in this description will be {called} \emph{maximal}. 
    Take a maximal path \(\bfP\) in \(L(T)\) and suppose that the vertices of \(\bfP\) are
    \begin{equation*}
        \alpha_{\xi_1}<\beta_{\xi_1}=\alpha_{\xi_2}<\cdots<\beta_{\xi_{\ell-1}}=\alpha_{\xi_{\ell}}<\beta_{\xi_{\ell}}.
    \end{equation*}
    It follows from \eqref{eqn:ws_positions} that
    \(\ini(f)\) is also divisible by 
    \[
        f_{\bfP}\coloneqq \lcm \Set{x_{1,w_{\xi_i}'}x_{2,w_{\xi_i}''} | 1\le i\le \ell}=\prod_{i=1}^{\ell}x_{1,w_{\xi_i}'}x_{2,w_{\xi_i}''}.
    \]
    Furthermore, the support of \(f_{\bfP}\) is a subset of \([\min(W_{j_{\xi_1}}),\max(W_{j_{\xi_{\ell}}})]=[\alpha_{\xi_1},\beta_{\xi_{\ell}}]\).

    To summarize, \(\ini(f)\) is divisible by
    \begin{align}
        f^*&\coloneqq \lcm\left(\left\{f_{\bfP}\mid \text{\(\bfP\) is a {maximal} path in \(L(T)\)}\right\}\cup\left\{\widehat{\bfm}_i\mid 0\le i\le s\right\}\right) \label{eqn:f_star_disojoint}\\
        &=\left( \prod_{\bfP\text{ is a {maximal} path}} f_{\bfP}\right) \cdot \left(\prod_{0\le i\le s}\widehat{\bfm}_i \right), \notag
    \end{align}
    where the equality holds since the supports of the squarefree monomials in
    \eqref{eqn:f_star_disojoint} are pairwise disjoint. 

    Notice that 
    \[
        \deg\left(\prod_{0\le i\le s}\widehat{\bfm}_i \right) =\sum_{i=0}^s\deg (\widehat{\bfm}_i) = \sum_{i=0}^s\deg ({\bfm}_i),
    \]
    Furthermore, the quadratic monomial \(x_{1,w_{\xi_i}'}x_{2,w_{\xi_i}''}\) in the definition of \(f_{\bfP}\) corresponds to the edge \(\{\alpha_{\xi_i},\beta_{\xi_i}\}\). Thus, \(\deg(f^{*})=\deg(f_T)\).
    Since \(f^{*}\) divides \(\ini(f)\), this implies that \(\deg(f)\ge \deg(f_{T})\), as expected.
\end{proof}

It is natural to ask if \Cref{thm:local_v_number_non_CM_closed} holds for generalized binomial edge ideals, i.e., do we have
\[
    \v_{\mathfrak{P}_T}(\calJ_{K_m,G}) = \min\{\deg(f_{\calP}) \mid \calP \text{ is an } m\text{-compatible partition of } E(L(T))\}
\]
when \(G\) is a connected closed graph?  Furthermore, we may seek a closed-form formula for the \(\v\)-number of \(\mathcal{J}_{K_m,G}\).  However, this task is non-trivial even for \(m=2\), owing to the highly interlaced structure of the maximal cliques of \(G\).  The main obstruction stems from the difficulty of identifying the cut set \(T\) that attains the minimal local \(\v\)-number.  For this reason, in the next section we impose the additional condition that \(G\) is Cohen--Macaulay.

\section{$\v$-number of generalized binomial edge ideals of Cohen--Macaulay closed graphs}
\label{sec:closegraph}

In this section, let $G$ be a connected Cohen-Macaulay closed graph and let \(m\ge 2\) be an integer. We will study the \(\v\)-number of the generalized binomial edge ideal \(\calJ_{K_m,G}\).

It follows from \cite[Theorem 3.1]{MR2863365} that we may assume there exist integers \(1 = a_1 < a_2 < \cdots < a_t < a_{t+1} = n\) such that the maximal cliques of \(G\) are \(F_i = [a_i,a_{i+1}]\) for \(i = 1,\dots,t\).  In other words, using the notation in \Cref{set:closed_graph_non_CM}, we have \(b_i = a_{i+1}\) for all \(i \in [t]\). We will also denote \(a_1\) by \(b_0\).
Consequently, every connected cut set \(W_i=[a_i,b_i]\cap[a_{i+1},b_{i+1}]=\{b_i\}\) of \(G\) is a singleton.

We will refer to the path graph with vertex set \(A\coloneqq \{b_0,b_1,\dots,b_t\} \) as the \emph{spine} of \(G\).  Furthermore, let \(\widetilde{C}(G)\coloneqq \{b_1,\dots,b_{t-1}\} \) denote the set of cut vertices of \(G\).  By \cite[Proposition 3.5]{MR2863365} or \Cref{prop:cutsetsclosed}, every cut set of \(G\) is contained in \(\widetilde{C}(G)\).  Moreover, if \(T= \{b_{j_1}<b_{j_2}<\cdots<b_{j_s}\} \subseteq \widetilde{C}(G)\) is non-empty, then \(T\in \calC(G)\) if and only if \(b_{j_i}+2\le b_{j_{i+1}}\) for all \(i\in [s-1]\).

\begin{Example}
    \label{exam1}
    The graph depicted in \Cref{fig1} is a Cohen--Macaulay closed graph with $t=14$ maximal cliques.  The vertices on the spine are
    \[
        \{b_0,b_1,\dots,b_{14}\}
        =\{1,3,6,7,9,12,13,15,18,19,21,22,24,26,27\}.
    \]   
    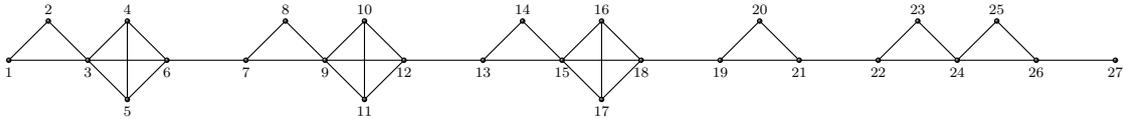
\begin{figure}[tbhp]
        \centering
        \scalebox{0.52}{
            \begin{tikzpicture}[thick, scale=1, every node/.style={scale=1.3}]
                \draw (0,0)--(28,0);
                \draw (0,0)--(1,1)--(2,0);
                \draw (2,0)--(3,1)--(4,0)--(3,-1)--(2,0);
                \draw (3,1)--(3,-1);
                \draw (6,0)--(7,1)--(8,0);
                \draw (9,1)--(10,0)--(9,-1)--(8,0)--(9,1)--(9,-1);
                \draw (12,0)--(13,1)--(14,0);
                \draw (15,1)--(16,0)--(15,-1)--(14,0)--(15,1)--(15,-1);
                \draw (18,0)--(19,1)--(20,0);
                \draw (22,0)--(23,1)--(24,0);
                \draw (24,0)--(25,1)--(26,0);

                \shade [shading=ball, ball color=black] (0,0) circle (.07) node [below] {\scriptsize$1$};
                \shade [shading=ball, ball color=black] (1,1) circle (.07) node [above] {\scriptsize$2$};
                \shade [shading=ball, ball color=black] (2,0) circle (.07) node [below] {\scriptsize$3$};
                \shade [shading=ball, ball color=black] (3,1) circle (.07) node [above] {\scriptsize$4$};
                \shade [shading=ball, ball color=black] (3,-1) circle (.07) node [below] {\scriptsize$5$};
                \shade [shading=ball, ball color=black] (4,0) circle (.07) node [below] {\scriptsize$6$};
                \shade [shading=ball, ball color=black] (6,0) circle (.07) node [below] {\scriptsize$7$};
                \shade [shading=ball, ball color=black] (7,1) circle (.07) node [above] {\scriptsize$8$};
                \shade [shading=ball, ball color=black] (8,0) circle (.07) node [below] {\scriptsize$9$};
                \shade [shading=ball, ball color=black] (9,1) circle (.07) node [above] {\scriptsize$10$};
                \shade [shading=ball, ball color=black] (9,-1) circle (.07) node [below] {\scriptsize$11$};
                \shade [shading=ball, ball color=black] (10,0) circle (.07) node [below] {\scriptsize$12$};
                \shade [shading=ball, ball color=black] (12,0) circle (.07) node [below] {\scriptsize$13$};
                \shade [shading=ball, ball color=black] (13,1) circle (.07) node [above] {\scriptsize$14$};
                \shade [shading=ball, ball color=black] (14,0) circle (.07) node [below] {\scriptsize$15$};
                \shade [shading=ball, ball color=black] (15,1) circle (.07) node [above] {\scriptsize$16$};
                \shade [shading=ball, ball color=black] (15,-1) circle (.07) node [below] {\scriptsize$17$};
                \shade [shading=ball, ball color=black] (16,0) circle (.07) node [below] {\scriptsize$18$};
                \shade [shading=ball, ball color=black] (18,0) circle (.07) node [below] {\scriptsize$19$};
                \shade [shading=ball, ball color=black] (19,1) circle (.07) node [above] {\scriptsize$20$};
                \shade [shading=ball, ball color=black] (20,0) circle (.07) node [below] {\scriptsize$21$};
                \shade [shading=ball, ball color=black] (22,0) circle (.07) node [below] {\scriptsize$22$};
                \shade [shading=ball, ball color=black] (23,1) circle (.07) node [above] {\scriptsize$23$};
                \shade [shading=ball, ball color=black] (24,0) circle (.07) node [below] {\scriptsize$24$};
                \shade [shading=ball, ball color=black] (25,1) circle (.07) node [above] {\scriptsize$25$};
                \shade [shading=ball, ball color=black] (26,0) circle (.07) node [below] {\scriptsize$26$};
                \shade [shading=ball, ball color=black] (28,0) circle (.07) node [below] {\scriptsize$27$};
            \end{tikzpicture}
        }
        \caption{A Cohen--Macaulay closed graph}
        \label{fig1}
    \end{figure}
\end{Example}

\subsection{Local $\v$-number}

Let \(T = \{b_{j_1} < b_{j_2} < \cdots < b_{j_s}\}\) be a cut set of \(G\).  To better characterize the local \(\v\)-number \(\v_T(\calJ_{K_m,G})\), we constructed a simple graph \(L(T)\) in \Cref{sec:nonCM_closed_graph}.  Since \(G\) is Cohen--Macaulay in this section, the description of \(L(T)\) can be simplified slightly.

\begin{Construction}
    \label{construct:L_T}
    Let \(A\coloneqq \{b_0,b_1,\dots,b_t\}\) be the set of vertices of the spine of $G$. For the given cut set $T$, let
    \[
        V_0\coloneqq \begin{cases}
            A\setminus T, & \text{if $\{b_1,b_{t-1}\}\cap T=\{b_1,b_{t-1}\}$},\\
            A\setminus (T\cup \{b_{t}\}), & \text{if $\{b_1,b_{t-1}\}\cap T=\{b_1\}$},\\
            A\setminus (T\cup \{b_{0}\}), & \text{if $\{b_1,b_{t-1}\}\cap T=\{b_{t-1}\}$},\\
            A\setminus (T\cup \{b_{0},b_{t}\}), & \text{if $\{b_1,b_{t-1}\}\cap T=\emptyset$}.
        \end{cases}
    \]

    For each adjacent vertices \(b_{j_i}\) and \(b_{j_{i+1}}\) with \(1\le i\le s-1\), if \({j_i+1} <{j_{i+1}}\), we set \(\beta_i=b_{j_i+1}\) and \(\alpha_{i+1}=b_{j_{i+1}-1}\). Otherwise, \({j_i+1} ={j_{i+1}}\). In this case, since $b_{j_i}+1<b_{j_{i+1}}$, we set  \(\beta_i=\alpha_{i+1}=b_{j_i}+1\). 
    Furthermore, we set \(\alpha_1=b_{j_1-1}\) and \(\beta_s=b_{j_s+1}\).

    The vertex set of \(L(T)\) is defined as \(V(L(T)) \coloneqq (V_0\setminus \bigcup_{i=1}^s [\alpha_i,\beta_i]) \sqcup \left(\bigcup_{i=1}^s \{\alpha_i,\beta_i\}\right)\).
The edge set of \(L(T)\) is defined as \(E(L(T)) \coloneqq \{\{\alpha_i,\beta_i\} \mid i \in [s]\}\).
This graph is completely determined by the cut set \(T\).
\end{Construction}

It is straightforward to verify that the constructions in \Cref{construct:L_T} and \Cref{construct:L_T_non_CM} are identical when \(G\) is a connected Cohen--Macaulay closed graph.

Recall that in \Cref{sec:nonCM_closed_graph}, we further considered an \(m\)-compatible partition \(\calP\) of \(E(L(T))\). From this partition, we introduced a homogeneous polynomial \(f_{\calP}\), which plays a role in computing the local \(\v\)-number \(\v_{T}(\calJ_{K_m,G})\).

\begin{Example}
    Let $G$ be the Cohen--Macaulay closed graph in \Cref{exam1}. For the cut set 
    \[
        T=\{b_1=3,b_2=6,b_4=9,b_8=18,b_{10}=21\},
    \]
    the associated graph $L(T)$ has vertex set
    \begin{align*}
        V(L(T))=\{1,4,7,12,13,15,19,22,24,26\}
    \end{align*}
    and edge set
    \begin{equation}
        E(L(T))=\{\uline{\{1,4\},\{4,7\}},\uline{\{7,12\}},\uline{\{15,19\},\{19,22\}}\}.
        \label{eqn:E(L(T))_underlines}
    \end{equation}
    As shown in \Cref{fig2}, $L(T)$ is the disjoint union of $2$ path graphs, with $3$ extra isolated vertices. The underlines in \eqref{eqn:E(L(T))_underlines} give a $3$-compatible partition $\calP$ on $E(L(T))$. There exists $3$ slices with respect to this partition. The initial monomial of the polynomial $f_{\calP}$ is
    \[
        (x_{1,1}x_{2,4}x_{3,7})(x_{1,7}x_{2,12})x_{1,13}(a_{1,15}x_{2,19}x_{3,22})x_{1,24}x_{1,26}.
    \]
    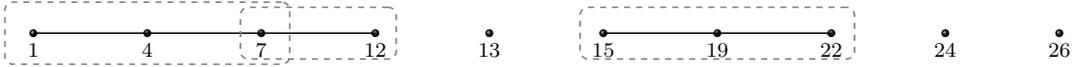
\begin{figure}[tbhp]
        \centering
        \scalebox{0.75}{
            \begin{tikzpicture}[thick, scale=1, every node/.style={scale=1.3}]
                \shade [shading=ball, ball color=black] (0,0) circle (.07) node [below] {\scriptsize$1$};
                \shade [shading=ball, ball color=black] (2,0) circle (.07) node [below] {\scriptsize$4$};
                \shade [shading=ball, ball color=black] (4,0) circle (.07) node [below] {\scriptsize$7$};
                \shade [shading=ball, ball color=black] (6,0) circle (.07) node [below] {\scriptsize$12$};
                \shade [shading=ball, ball color=black] (8,0) circle (.07) node [below] {\scriptsize$13$};
                \shade [shading=ball, ball color=black] (10,0) circle (.07) node [below] {\scriptsize$15$};
                \shade [shading=ball, ball color=black] (12,0) circle (.07) node [below] {\scriptsize$19$};
                \shade [shading=ball, ball color=black] (14,0) circle (.07) node [below] {\scriptsize$22$};
                \shade [shading=ball, ball color=black] (16,0) circle (.07) node [below] {\scriptsize$24$};
                \shade [shading=ball, ball color=black] (18,0) circle (.07) node [below] {\scriptsize$26$};

                \draw (0,0)--(6,0);
                \draw (10,0)--(14,0);

                \node [draw, rounded corners, fit={(0.5,0) (3.5,0)}, inner sep=12pt, dashed, gray] {};
                \node [draw, rounded corners, fit={(4.3,0) (5.7,0)}, inner sep=10pt, dashed, gray] {};
                \node [draw, rounded corners, fit={(10.5,0) (13.5,0)}, inner sep=10pt, dashed, gray] {};
            \end{tikzpicture}
        }
        \caption{The associated graph $L(T)$}
        \label{fig2}
    \end{figure}
\end{Example}

Next, we generalize \Cref{thm:local_v_number_non_CM_closed} to the case of generalized binomial edge ideals where \(G\) is a Cohen--Macaulay closed graph.
Specifically, we show that the minimal degree of the polynomials \(f_{\calP}\) gives the desired local \(\v\)-number with respect to \(T\).

Recall that with respect to the lexicographic order on the polynomial ring \(S\), the initial ideal of \(\calJ_{K_m,G}\) is given by
\[
(x_{k_1,u}x_{k_2,v} \mid 1 \le k_1 < k_2 \le m \text{ and } 1 \le u < v \le n \text{ with } \{u, v\} \in E(G)),
\]
as shown in \cite[Theorem 1.3]{MR3290687}.

\begin{Proposition}
    \label{prop:local_v_number_path}
    The local \(\v\)-number with respect to the cut set \(T\) of \(G\) is given by
    \[
        \v_{\frakP_T}(\calJ_{K_m,G})=\min\{\deg(f_{\calP})\mid \text{\(\calP\) is an \(m\)-compatible partition of \(E(L(T))\)}\}.
    \]
\end{Proposition}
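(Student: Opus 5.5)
The equality splits into two inequalities. The inequality ``$\le$'' is already available: by \Cref{lem:closed_f_works}, for every $m$-compatible partition $\calP$ of $E(L(T))$ we have $(\calJ_{K_m,G}:f_{\calP})=\frakP_T(K_m,G)$. Hence $\v_{\frakP_T}(\calJ_{K_m,G})\le \deg(f_{\calP})$ for all such $\calP$.

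We first compute the right-hand side. A maximal path $\bfP$ of $L(T)$ with $\ell$ edges, cut into slices of lengths $\ell_1,\dots,\ell_r\le m-1$, contributes $\sum_p(\ell_p+1)=\ell+r$ to $\deg(f_{\calP})$. Each isolated vertex contributes $1$. So the minimum equals
\[
\sum_{\bfP}\Bigl(\ell(\bfP)+\Bigl\lceil \tfrac{\ell(\bfP)}{m-1}\Bigr\rceil\Bigr)+\#\{\text{isolated vertices of } L(T)\}.
\]

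For ``$\ge$'', take a homogeneous $f$ with $(\calJ_{K_m,G}:f)=\frakP_T(K_m,G)$. By \Cref{lem:change_initial} we may assume $\ini(f)\notin\ini(\calJ_{K_m,G})$. We then follow the proof of \Cref{thm:local_v_number_non_CM_closed} with $K_2$ replaced by $K_m$; this is legitimate because \Cref{thm:colon_x} and \Cref{lem:colon_edge_short} are stated for arbitrary $m$. We use the following description of standard monomials: a monomial $u$ lies outside $\ini(\calJ_{K_m,G})$ exactly when, for any $x_{k_1,a}x_{k_2,b}\mid u$ with $a<b$ adjacent in $G$, we have $k_1\ge k_2$.

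Part (a) carries over verbatim. Each component $\widehat{G}_i$ forces a factor $\widehat{\bfm}_i$ of $\ini(f)$ of degree $|C_i|$. Its support lies in $[\beta_i+1,\alpha_{i+1}-1]$; in the Cohen--Macaulay case each $\widehat{W}_k$ is a single spine vertex.

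Part (b) is modified as follows. For a cut vertex $v=b_{j_i}\in T$ we have $f\in\calJ_{K_m,G_v}$. Hence $\ini(f)$ is divisible by some $x_{r_i',w_i'}x_{r_i'',w_i''}$ with $r_i'<r_i''$, $w_i'\in F_{j_i}\setminus\{v\}$ and $w_i''\in F_{j_i+1}\setminus\{v\}$. The row indices $r_i'$, $r_i''$ need no longer be $1$ and $2$. The same contradiction argument as before should give the analogue of \eqref{eqn:ws_positions}: $w_i'$ and $w_i''$ lie strictly between the neighbouring cut vertices of $T$, and so in $[\alpha_i,\beta_i]$. Consequently, the variables attached to different maximal paths, and those attached to the $\widehat{\bfm}_i$, have pairwise disjoint supports.

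The main step, and the one I expect to be the real obstacle, is counting the distinct variables $x_{r,w}$ produced along a single maximal path $\bfP$ with edges $1,\dots,\ell$. The aim is to show there are at least $\ell+\lceil \ell/(m-1)\rceil$ of them. Edge $i$ supplies two distinct variables. The variables of edges $i$ and $i+1$ can coincide only when $x_{r_i'',w_i''}=x_{r_{i+1}',w_{i+1}'}$, and the $w$'s of non-adjacent edges are separated by \eqref{eqn:ws_positions}. Thus the variables form a chain in which consecutive edges share at most one variable. A maximal run of $k$ edges linked by such sharings then gives
\[
r'_{i}<r''_{i}=r'_{i+1}<r''_{i+1}=\cdots<r''_{i+k-1}.
\]
These are $k+1$ distinct rows in $[m]$, which forces $k\le m-1$. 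Each run of $k$ edges contributes exactly $k+1$ distinct variables, and there are at least $\lceil \ell/(m-1)\rceil$ runs, so the count is at least $\ell+\lceil \ell/(m-1)\rceil$.

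Some care is needed in two places. First, we must check that two variables from different runs cannot coincide, for instance by reusing a variable from edge $i$ at edge $i+2$; this is excluded by the position bounds. Second, when $w_i''\neq w_{i+1}'$ but both lie in the common clique, the standard-monomial condition only constrains the rows and does not merge variables. Summing over all paths and isolated parts, using the disjointness of supports, yields $\deg(f)\ge\deg\ini(f)\ge$ the displayed minimum, which completes the proof.
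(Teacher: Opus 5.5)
Your proof follows the paper's. The upper bound comes from \Cref{lem:closed_f_works}. For the lower bound you extract the same forced divisors of $\ini(f)$: a variable $x_{\gamma,b_l}$ at each isolated vertex via \Cref{lem:colon_edge_short}, and a product $x_{r_i',w_i'}x_{r_i'',w_i''}$ with $r_i'<r_i''$ at each $b_{j_i}\in T$ via \Cref{thm:colon_x}. You then use the same strictly increasing row chain to cap a linked run at $m-1$ edges. The only difference is bookkeeping. The paper first replaces $w_i',w_i''$ by $\alpha_i,\beta_i$ and reads the partition off the coincidences of the flattened variables. You instead count the distinct variables of $\ini(f)$ directly and compare with the explicit minimum $\ell+\lceil \ell/(m-1)\rceil$ per maximal path. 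Both work.

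The step you leave as ``the same contradiction argument should give'' needs a different justification. It is genuinely needed. In the Cohen--Macaulay case, the only coincidence between non-consecutive edges that your count must exclude is $x_{r_i'',w_i''}=x_{r_{i+2}',w_{i+2}'}$ with $w_i''=w_{i+2}'=b_{j_{i+1}}$. The a priori ranges allow this when $j_{i+2}=j_{i+1}+1=j_i+2$.

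The $K_2$ argument does not transfer, because it pairs $x_{1,w_i'}$ with $x_{2,w_{i-1}''}$ and uses $1<2$. With arbitrary rows, a single standard-monomial condition only gives an inequality between two rows, not a contradiction. Use instead both variables forced at the cut vertex itself: no $x_{r,v}$ with $v=b_j\in T$ can divide $\ini(f)$. Indeed, for the pair $x_{r',w'}x_{r'',w''}$ forced at $v$ we have $w'<v<w''$, $\{w',v\}\subseteq F_j$ and $\{v,w''\}\subseteq F_{j+1}$. Then $\ini(f)\notin\ini(\calJ_{K_m,G})$ forces $r'\ge r\ge r''$, contradicting $r'<r''$. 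This gives $b_{j_{i-1}}<w_i'$ and $w_i''<b_{j_{i+1}}$, which is all your count needs.

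The other disjointness claims hold even without these bounds. Across maximal paths, $w_i''=w_{i+1}'$ already forces $\beta_i=\alpha_{i+1}$. The isolated vertices cannot lie in any range $[b_{j_i-1},b_{j_i+1}]$.

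Incidentally, the paper's inequality $\deg(\widetilde{M})\le\deg(M)$ tacitly relies on the same fact. Otherwise a single variable of $M$ at $b_{j_{i+1}}$ would flatten to two different variables, at $\beta_i$ and at $\alpha_{i+2}$. So you were right to flag this point.
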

\begin{proof}
    Let \(f\) be a homogeneous polynomial in \(S\) such that \((\calJ_{K_m,G} : f) = \mathfrak{P}_T(K_m,G)\).  It follows from \Cref{lem:closed_f_works} that it suffices to show the existence of an \(m\)-compatible partition \(\calP\) of \(E(L(T))\) satisfying \(\deg(f_{\calP}) \le \deg(f)\), i.e., \(\deg(f_{\calP}) \le \deg(\ini(f))\).  By \Cref{lem:change_initial}, we may assume without loss of generality that \(\ini(f) \notin \ini(\calJ_{K_m,G})\).
Two cases contribute to the factors of \(\ini(f)\).
    \begin{enumerate}[a]
        \item Let \(\tau\) be an arbitrary isolated vertex of \(L(T)\).  It is readily verified that \(\tau = b_l\) for some \(1 \le l \le t-1\).  Furthermore, \(b_{l-1}\) and \(b_{l+1}\) lie in the same connected component of \(G \setminus T\), which we denote by \(G'\).  In particular, \(b_l - 1\) and \(b_l + 1\) belong to \(G'\).

            Thus, for every \(1 \le k_1 < k_2 \le m\), the binomial \([k_1,k_2|b_l - 1,b_l + 1]\) is contained in the prime ideal \(\mathfrak{P}_T(K_m,G)\).  Since \((\calJ_{K_m,G} : f) = \mathfrak{P}_T(K_m,G)\), this implies \(f\) is in the colon ideal
            \begin{align*}
                (\calJ_{K_m,G} : [k_1,k_2|b_l - 1,b_l + 1]) = (x_{k,b_l} \mid k \in [m]) + \calJ_{K_m,G \setminus b_l},
            \end{align*}
            by \Cref{lem:colon_edge_short}.  Therefore,
            \[
                \ini(f) \in (x_{k,b_l} \mid k \in [m]) + \ini(\calJ_{K_m,G \setminus b_l}).
            \]
            Since \(\ini(\calJ_{K_m,G \setminus b_l}) \subseteq \ini(\calJ_{K_m,G})\) but \(\ini(f) \notin \ini(\calJ_{K_m,G})\), there exists some \(\gamma_l \in [m]\) such that \(x_{\gamma_l,b_l}\) divides \(\ini(f)\).

        \item Let \(\tau\) be an arbitrary element of \(T\); then \(\tau = b_{j_i}\) for some \(i \in [s]\).
For every \(k \in [m]\), we have \(x_{k,b_{j_i}} \in \mathfrak{P}_T(K_m,G)\), and thus \(f \in (\calJ_{K_m,G} : x_{k,b_{j_i}})\).
As shown in \Cref{thm:colon_x}, this colon ideal is equal to \(\calJ_{K_m,G_{b_{j_i}}}\).
Recall that \(G_{b_{j_i}}\) is a Cohen--Macaulay closed graph with \(E(G_{b_{j_i}}) = E(G) \cup \{\{u,v\} \mid u,v \in N_G(b_{j_i})\}\).
Since \(\ini(f) \notin \ini(\calJ_{K_m,G})\), we conclude there exists a monomial \(x_{k_i',w_i'}x_{k_i'',w_i''}\) dividing \(\ini(f)\) such that \(1 \le k_i' < k_i'' \le m\) and \(b_{j_i-1} \le w_i' < b_{j_i} < w_i'' \le b_{j_i+1}\).
Notably, neither \(w_i'\) nor \(w_i''\) is an isolated vertex in \(L(T)\).
Furthermore, we have \(b_{j_i-1} \le \alpha_i < b_{j_i} < \beta_i \le b_{j_i+1}\).
    \end{enumerate}

Therefore, the squarefree monomial
    \[
        M \coloneqq \operatorname{lcm}\bigl(\{x_{\gamma_l,b_l} \mid b_l \text{ is an isolated vertex of } L(T)\}
        \cup\{x_{k_i',w_i'}x_{k_i'',w_i''} \mid b_{j_i}\in T\}\bigr)
    \]
    divides \(\ini(f)\).  We next consider the ``flattened'' monomial
    \[
        \widetilde{M} \coloneqq \operatorname{lcm}\bigl(\{x_{\gamma_l,b_l} \mid b_l \text{ is an isolated vertex of } L(T)\}
        \cup\{x_{k_i',\alpha_i}x_{k_i'',\beta_i} \mid b_{j_i}\in T\}\bigr).
    \]
    It is straightforward to see that \(\deg(\widetilde{M}) \le \deg(M)\).  Furthermore, if the variable \(x_{i,v}\) divides \(\widetilde{M}\), then \(v \in V(L(T))\).

    Consider an equivalence relation (partition) \(\calP\) on \(E(L(T))\) generated by the following rule: if \(x_{k_i'',\beta_i} = x_{k_{i+1}',\alpha_{i+1}}\), then \(\{\alpha_i,\beta_i\} \sim \{\alpha_{i+1},\beta_{i+1}\}\).  Suppose that there are \(q\) equivalence classes, and the vertex sets of the respective equivalence classes are
    \[
        \{c_{i_p,1} < c_{i_p,2} < \dots < c_{i_p,\ell_p}\}
    \]
    for \(p \in [q]\) and \(2 \le \ell_p\).

    For each \(p \in [q]\), suppose that \(c_{i_p,r} = \alpha_{j_r}\) for \(r \in [\ell_p-1]\).  Observe that
    \[
        1 \le k_{j_1}' < k_{j_1}'' = k_{j_2}' < \cdots < k_{j_{\ell_p-2}}'' = k_{j_{\ell_p-1}}' < k_{j_{\ell_p-1}}'' \le m.
    \]
    Hence \(\ell_p \le m\), so in particular \(\calP\) is an \(m\)-compatible partition of \(E(L(T))\).  Furthermore, the squarefree monomial
    \[
        \operatorname{lcm}\bigl\{x_{k_j',\alpha_j}x_{k_j'',\beta_j} \mid j = j_r \text{ for } r \in [\ell_p-1]\bigr\}
    \]
    equals
    \[
        x_{k_{j_1}',\alpha_{j_1}} x_{k_{j_2}',\alpha_{j_2}} \cdots x_{k_{j_{\ell_p-1}}',\alpha_{j_{\ell_p-1}}}
        x_{k_{j_{\ell_p-1}}'',\beta_{j_{\ell_p-1}}}.
    \]
    We denote this monomial by \(\widetilde{f}_p\).

    It is easy to see that
    \begin{align*}
        \widetilde{M} =
        \left(\prod_{\substack{b_l \text{ is an isolated}\\\text{vertex of } L(T)}}
        x_{\gamma_l,b_l}\right)
        \cdot \prod_{p=1}^q \widetilde{f}_p.
    \end{align*}
    Since \(\deg(M) \ge \deg(\widetilde{M}) = \deg(f_{\calP})\) and \(M\) divides \(\ini(f)\), we obtain \(\deg(f) \ge \deg(f_{\calP})\), as desired.
\end{proof}

\begin{Remark}
    Note that the graph \(L(T)\) in \Cref{construct:L_T_non_CM} is defined specifically for non-empty cut sets of \(G\). Indeed, if \(T\) is the empty set, we may take \(L(T)\) to be the graph consisting of \(\gamma_c^{*}(G)\) isolated vertices.  Thus the proof of \Cref{prop:local_v_number_path} still carries over without difficulty, which further confirms that the local \(\v\)-number equals \(\gamma_c^{*}(G)\), as predicted in \Cref{lem:minimal_CDS}.
\end{Remark}

\subsection{Realization of \(\v(\calJ_{K_m,G})\)}

It follows from \Cref{prop:local_v_number_path} that there exists an \(m\)-compatible partition \(\calP\) of \(E(L(T))\) such that \(T\in \calC(G)\) and \(\deg(f_{\calP})\) attains the desired minimum value, that is,
\[
    \v(\calJ_{K_m,G}) = \deg(f_{\calP}) = \min\{\v_W(\calJ_{K_m,G}) \mid W\in \calC(G)\}.
\]
We now make the following observations concerning this optimal cut set \(T\) and the optimal partition \(\calP\).

\begin{Observations}
    \label{obs:optimal_T_P}
    \begin{enumerate}[a]
        \item \label{item_no_1_t+1}
            If \(1 = b_0 \in V(L(T))\), then \(b_1 = b_{j_1} \in T\) and \(b_0\) is not an isolated vertex.  In other words, \(b_0\) is the left endpoint of some slice from the optimal partition \(\calP\).  We remove the edge \(\{b_0 = \alpha_1, \beta_1\}\) from \(E(L(T))\) but retain the vertex \(b_0\) in \(V(L(T))\).  This yields a graph \(L'(T')\) that is closely related to \(L(T')\) for the cut set \(T' \coloneqq T \setminus \{b_1\} \in \calC(G)\).  For the resulting partition \(\calP'\) on \(E(L(T'))\), we obtain the new polynomial \(f_{\calP'}\).

            \begin{enumerate}[i]
                \item If  \(b_2 = b_{j_2} \in T\), then \(b_0\) is an isolated vertex of \(L'(T')\).  Removing \(b_0\) yields a graph isomorphic to \(L(T')\).  In this case, \(\deg(f_{\calP}) > \deg(f_{\calP'})\), which contradicts the minimality of \(\calP\).

                \item Otherwise, \(b_2 \notin T\) and the single edge \(\{b_0,b_2\}\) is a slice of length \(1\) in \(\calP\).  It is easy to see that \(L'(T')\) is isomorphic to \(L(T')\), with the isolated vertex \(b_0\) replaced by \(b_1\).  In this case, \(\deg(f_{\calP}) \ge \deg(f_{\calP'})\).  The strict inequality \(\deg(f_{\calP}) > \deg(f_{\calP'})\) holds if and only if \(b_2\) is also the endpoint of another adjacent slice in \(\calP\).  
            \end{enumerate}

            Thus, for simplicity, we may assume \(b_0 \notin V(L(T))\).  By symmetry, we may also assume \(n = b_t \notin V(L(T))\).

        \item \label{item_no_bij}
            Recall that \(\widetilde{C}(G) \coloneqq \{b_1, \dots, b_{t-1}\}\) denotes the set of cut vertices of \(G\).  We currently have \(\widetilde{C}(G) \subseteq V(L(T)) \sqcup T\).  If this containment is strict, then by the assumption in Item \ref{item_no_1_t+1} above, there exists some \(\beta_i = \alpha_{i+1} \notin \widetilde{C}(G)\).  Let \(i\) be the minimal index for which this holds.  Note that \(\beta_i = \alpha_{i+1}\) is adjacent to both \(\alpha_i\) and \(\beta_{i+1}\).  We remove the edge \(\{\alpha_i, \beta_i\}\) from the graph \(L(T)\).  As in Item \ref{item_no_1_t+1} above, the resulting graph is isomorphic to \(L(T')\) for the cut set \(T' \coloneqq T \setminus \{b_{j_i}\} \in \calC(G)\).  Let \(\calP'\) be the induced partition on \(E(L(T'))\); one can verify that \(\deg(f_{\calP}) \ge \deg(f_{\calP'})\).

            Thus, for simplicity, we may assume \(\widetilde{C}(G) = V(L(T)) \sqcup T\).
As a result, \(j_i + 2 \le j_{i+1}\) for all \(i \in [s-1]\).

        \item \label{item_few_isolated_vertices}
            Without loss of generality, we may assume all isolated vertices lie at the right (larger) end of \(\widetilde{C}(G)\). We claim \(L(T)\) contains at most two isolated vertices. To see this, suppose for contradiction that \(L(T)\) has at least three isolated vertices. We may take such vertices to be \(b_{t-3}, b_{t-2}, b_{t-1}\). Adding \(b_{t-2}\) to \(T\) gives a new cut set \(T'\) satisfying
            \[
                E(L(T')) = E(L(T)) \cup \{\{b_{t-3}, b_{t-1}\}\}.
            \]
            For the corresponding partition \(\calP'\) of \(E(L(T'))\) where the single edge \(\{b_{t-3}, b_{t-1}\}\) is a slice, we have \(\deg(f_{\calP'}) = \deg(f_{\calP}) - 1\), contradicting the minimality of \(\calP\).

            Thus, we may assume \(L(T)\) contains at most two isolated vertices.

        \item \label{item_slices_disjoint}
            Suppose that there exist two adjacent slices with lengths $(n_1-1)$ and $(n_2-1)$ respectively, where \(2\le n_1,n_2\le m\), and that they are connected by a common vertex $b_{j_{v}}$. Due to the assumption in the previous Item \ref{item_no_bij}, the leftmost edge of the right (second) slice must be $\{b_{j_{v}},b_{j_{v+2}}\}$, with $b_{j_{v+1}}\in T$. Removing the edge $\{b_{j_{v}},b_{j_{v+2}}\}$ from $L(T)$ and supplementing it with an isolated vertex $b_{j_{v+1}}$ yields $L(T')$ for the cut set $T'\coloneqq T\setminus\{b_{j_{v+1}}\}\in \calC(G)$. Let \(\calP'\) be the resulting partition on \(E(L(T'))\). Notice that \(\deg(f_{\calP})\ge \deg(f_{\calP'})\). 

            Therefore, for simplicity, we may assume that all the slices resulting from the partition $\calP$ are vertex-disjoint.

        \item Suppose there exist two adjacent but non-connected slices of lengths \(n_1 - 1\) and \(n_2 - 1\), respectively, with \(2 \le n_1 < m\) and \(2 \le n_2 \le m\). We move the left end-edge of the right slice to the left slice, creating two new slices of lengths \((n_1 + 1) - 1\) and \((n_2 - 1) - 1\), respectively. If \(n_2 = 2\), we regard the resulting right slice, an isolated vertex, as a degenerate slice. After suitable relabeling, the resulting graph corresponds to another \(m\)-compatible partition \(\calP'\) of \(E(L(T'))\) for some cut set \(T' \in \calC(G)\). Note that \(\deg(f_{\calP}) = \deg(f_{\calP'})\). 

            Thus, by the assumptions in Items \ref{item_few_isolated_vertices} and \ref{item_slices_disjoint} and the aforementioned movement, we may assume for simplicity that all slices have length \(m - 1\), except the rightmost slice.

        \item Suppose the vertex set of the rightmost slice of \(L(T)\) is \(\{c_{i_q,1} < c_{i_q,2} < \dots < c_{i_q,\ell_q}\}\) with \(2 \le \ell_q \le m\). We claim that \(\ell_q = m\) if \(L(T)\) has two isolated vertices.

            To verify this, suppose \(c_{i_q,\ell_q} = b_{j_v}\). It follows that the two isolated vertices are \(b_{j_v+1}\) and \(b_{j_v+2}\), which are indeed \(b_{t-2}\) and \(b_{t-1}\), respectively. Now assume for contradiction that \(\ell_q < m\). We may replace \(T\) with \(T' \coloneqq T \cup \{b_{j_v+1}\} \in \calC(G)\). For such \(T'\), we have
            \[
                V(L(T')) = V(L(T)) \setminus \{b_{j_v+1}\}
                \quad\text{and}\quad
                E(L(T')) = E(L(T)) \cup \bigl\{\{b_{j_v},b_{j_v+2}\}\bigr\}.
            \]
            Extending the rightmost slice of \(\calP\) by the newly added edge yields an \(m\)-compatible partition \(\calP'\) of \(E(L(T'))\). Since \(\deg(f_{\calP'}) < \deg(f_{\calP})\), this contradicts the minimality of \(\calP\).

            We therefore conclude that \(\ell_p = m\) whenever \(L(T)\) contains two isolated vertices.
    \end{enumerate} 
\end{Observations}

To summarize, for the optimal cut set \(T\) and the optimal partition \(\calP\) of \(E(L(T))\) satisfying \(\v(\calJ_{K_m,G}) = \deg(f_{\calP})\), we may impose the following assumptions:
\begin{itemize}
    \item The vertex set \(V(L(T))\) of \(L(T)\) satisfies \(V(L(T)) \sqcup T = \widetilde{C}(G)\);
    \item The slices are pairwise vertex-disjoint;
    \item Every slice has length \(m - 1\), except possibly the rightmost slice;
    \item The number of isolated vertices is at most two, and these vertices lie near the right end of the graph;
    \item If there are two isolated vertices, the rightmost slice also has length \(m - 1\).
\end{itemize}
Under these assumptions, both the optimal cut set \(T\) and the optimal partition \(\calP\) are uniquely determined.

\begin{Construction}
    \label{construct:T}
    We describe the construction of the optimal cut set \(T\) and the optimal partition \(\calP\) as follows. Let \(\widetilde{C}(G) = \{b_1, \dots, b_{t-1}\}\) denote the base vertex set.

    We may write \(t-1 = \left\lfloor\frac{t-1}{2m-1}\right\rfloor \cdot (2m-1) + A\) for some integer \(A\) with \(0 \le A < 2m-1\). Using this decomposition, for each \(p \in \left[\left\lfloor\frac{t-1}{2m-1}\right\rfloor\right]\) and each \(j \in [2m-1]\), set \(d_{p,j} \coloneqq b_{(p-1)(2m-1) + j}\). This gives \(\left\lfloor\frac{t-1}{2m-1}\right\rfloor\) slices of length \((2m-1) - 1\), whose vertex sets are respectively
    \[
        \left\{d_{p,1} < \fbox{$d_{p,2}$} < d_{p,3} < \fbox{$d_{p,4}$} < d_{p,5} < \cdots < \fbox{$d_{p,2m-2}$} < d_{p,2m-1}\right\}
    \]
    for \(p = 1, 2, \dots, \left\lfloor\frac{t-1}{2m-1}\right\rfloor\).

    When \(A > 0\), we have \(A + 1 = \left\lfloor\frac{A+1}{2}\right\rfloor \cdot 2 + B\) for some \(B \in \{0, 1\}\). Let \(K = \left\lfloor\frac{A+1}{2}\right\rfloor \cdot 2 - 1\), and for each \(j \in [K]\), set \(e_j \coloneqq b_{\left\lfloor\frac{t-1}{2m-1}\right\rfloor \cdot (2m-1) + j}\). Corresponding to this, there is a slice of length \(K - 1\) with vertex set
    \[
        \{e_1 < \fbox{$e_2$} < e_3 < \fbox{$e_4$} < e_5 < \cdots < \fbox{$e_{K-1}$} < e_K\}.
    \]
    If \(K = 1\), we regard this as a degenerate slice, i.e., an isolated vertex. There may be \(B\) extra isolated vertices remaining.

    If \(A = 0\), we set \(B = 0\) as well.

    For this construction, the optimal cut set \(T\) is exactly the set of all boxed vertices. Removing these vertices from the above construction yields the optimal partition \(\calP\) of \(E(L(T))\).
\end{Construction}

\begin{Example}
    Let $G$ be the Cohen--Macaulay closed graph in \Cref{exam1}. If $m=3$, the optimal cut set constructed in \Cref{construct:T} is \(T=\{6, 9, 15,19, 24\}\). The associated graph $L(T)$ is depicted in \Cref{fig3}, which has no isolated vertices.
    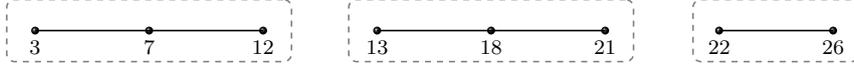
\begin{figure}[tbhp] 
        \scalebox{0.75}{
            \begin{tikzpicture}[thick, scale=1, every node/.style={scale=1.3}]
                \shade [shading=ball, ball color=black] (0,0) circle (.07) node [below] {\scriptsize$3$};
                \shade [shading=ball, ball color=black] (2,0) circle (.07) node [below] {\scriptsize$7$};
                \shade [shading=ball, ball color=black] (4,0) circle (.07) node [below] {\scriptsize$12$};
                \shade [shading=ball, ball color=black] (6,0) circle (.07) node [below] {\scriptsize$13$};
                \shade [shading=ball, ball color=black] (8,0) circle (.07) node [below] {\scriptsize$18$};
                \shade [shading=ball, ball color=black] (10,0) circle (.07) node [below] {\scriptsize$21$};
                \shade [shading=ball, ball color=black] (12,0) circle (.07) node [below] {\scriptsize$22$};
                \shade [shading=ball, ball color=black] (14,0) circle (.07) node [below] {\scriptsize$26$};

                \draw (0,0)--(4,0);
                \draw (6,0)--(10,0);
                \draw (12,0)--(14,0);

                \node [draw, rounded corners, fit={(0.5,0) (3.5,0)}, inner sep=12pt, dashed, gray] {};
                \node [draw, rounded corners, fit={(6.5,0) (9.5,0)}, inner sep=12pt, dashed, gray] {};
                \node [draw, rounded corners, fit={(12.3,0) (13.7,0)}, inner sep=12pt, dashed, gray] {};
            \end{tikzpicture}
        }
        \caption{The associated graph $L(T)$ for the optimal cut set $T$}
        \label{fig3}
    \end{figure} 
\end{Example}

We are now in a position to state the main result of this section.

\begin{Theorem}
    \label{thm:v_number_CM_closed}
    If $G$ is a connected Cohen--Macaulay closed graph with $t$ maximal cliques, then 
    \[
        \v(\calJ_{K_m,G})= D_{m,t}\coloneqq \floor{\frac{t-1}{2m-1}}\cdot m+ \floor{\frac{A+1}{2}} + B.
    \]
\end{Theorem}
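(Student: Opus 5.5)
We prove the two inequalities $\v(\calJ_{K_m,G})\le D_{m,t}$ and $\v(\calJ_{K_m,G})\ge D_{m,t}$ separately, both resting on \Cref{prop:local_v_number_path}. Since the minimal local $\v$-number over the nonempty cut sets is what matters, we also compare with the empty cut set. By \Cref{lem:minimal_CDS}, for a Cohen--Macaulay closed graph $\v_\emptyset(\calJ_{K_m,G})=\gamma_c^*(G)=t-1$, because the path of \Cref{constr:CDS} is exactly the spine. One checks directly that $D_{m,t}\le t-1$. Indeed, each block of $2m-1$ cut vertices contributes only $m\le 2m-1$, and the remainder contributes $\lfloor (A+1)/2\rfloor+B\le A$ when $A\ge1$. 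So the empty cut set never beats the formula.

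\emph{Upper bound.} Take the cut set $T$ and the partition $\calP$ of \Cref{construct:T}. First verify that $T\in\calC(G)$: the boxed vertices are $b_j$ with indices differing by at least $2$, so $b_{j_i}+2\le b_{j_{i+1}}$ holds automatically, since each clique has at least two vertices and consecutive spine vertices differ by at least $1$. Next check via \Cref{construct:L_T} that $L(T)$ is exactly the graph drawn there. Each block $d_{p,1},\dots,d_{p,2m-1}$ with its $m-1$ boxed vertices removed gives a path on $m$ vertices $d_{p,1},d_{p,3},\dots,d_{p,2m-1}$. This uses the fact that, for consecutive $j_i,j_{i+1}$ with $j_{i+1}=j_i+2$, we get $\beta_i=\alpha_{i+1}=b_{j_i+1}$. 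The remainder gives a path on $(K+1)/2$ vertices and $B$ isolated vertices. The endpoints $b_0,b_t$ do not enter: $b_1,b_{t-1}\notin T$ in this construction unless forced, so $V_0$ excludes them, and $\alpha_1=b_{j_1-1}=b_1$. Each slice has at most $m$ vertices, so $\calP$ is $m$-compatible. Then $\deg f_{\calP}$ equals the total number of vertices of $L(T)$, because each slice polynomial is an $\ell_p\times\ell_p$ minor and the slices are vertex-disjoint. This total is $\lfloor\frac{t-1}{2m-1}\rfloor m+\frac{K+1}{2}+B=D_{m,t}$. By \Cref{lem:closed_f_works}, $\v(\calJ_{K_m,G})\le D_{m,t}$.

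\emph{Lower bound.} Let $T\in\calC(G)$ and let $\calP$ be $m$-compatible with $\deg f_{\calP}=\v(\calJ_{K_m,G})$ minimal. By \Cref{obs:optimal_T_P}, we may replace $(T,\calP)$ step by step, without increasing the degree, by a normalized pair. In this pair $V(L(T))\sqcup T=\widetilde C(G)$, the slices are vertex-disjoint, all slices except the rightmost have length $m-1$, there are at most two isolated vertices, and there is the extra condition when two isolated vertices occur. It then remains to count. Say there are $q$ nondegenerate slices with vertex counts $\ell_1,\dots,\ell_q$ and $r\le 2$ isolated vertices. Because the slices are disjoint and each internal vertex of a slice is separated from the next by exactly one vertex of $T$, a slice with $\ell$ vertices uses $2\ell-1$ cut vertices. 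Hence
\[
t-1=\sum_p(2\ell_p-1)+r+(\text{number of $T$-vertices between slices}).
\]
Under normalization the gaps between slices are single $T$-vertices. The degree is $\sum_p\ell_p+r$. Solving under the constraints $\ell_p=m$ for $p<q$, $\ell_q\le m$, and $r\le2$ (with $r=2$ forcing $\ell_q=m$) determines $\ell_q$ and $r$ from $t-1 \bmod (2m-1)$. This gives exactly $D_{m,t}$, i.e., the normalized configuration is the one in \Cref{construct:T}.

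The main obstacle is the lower bound. Specifically, we must make the reductions of \Cref{obs:optimal_T_P} rigorous: each move must produce a genuine cut set and an $m$-compatible partition with no larger degree, and the process must terminate. Most delicate are the bookkeeping of whether gaps between consecutive slices consist of one vertex of $T$ or more, and the edge cases $A=0$ and $K=1$ in the counting. I would first handle these by a direct inequality: for any admissible configuration, $\deg f_{\calP}\ge \sum_p\ell_p+r$, and $t-1\le \sum_p(2\ell_p-1)+r+(q-1)$. I would then minimize $\sum\ell_p+r$ subject to this and to $\ell_p\le m$, which avoids relying on uniqueness of the optimizer.
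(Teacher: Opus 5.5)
Your outline matches the paper's. The upper bound comes from \Cref{construct:T} and \Cref{lem:closed_f_works}; for $t\le 3$ the constructed $T$ is empty, and you need $\v_\emptyset(\calJ_{K_m,G})=t-1$ instead, which you already have. The lower bound comes from \Cref{prop:local_v_number_path}, the normalization of \Cref{obs:optimal_T_P}, and a count. The upper bound is fine, but the count is wrong, and this leaves a genuine gap in the lower bound.

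After normalization there are \emph{no} vertices of $T$ between consecutive slices. Suppose $b_j\in T$ sat between a slice ending at $b_{j-1}$ and the next slice starting at $b_{j+1}$. Then the edge $\{b_{j-1},b_{j+1}\}$ of $L(T)$ would belong to some slice, contradicting vertex-disjointness. Compare \Cref{construct:T}, where $d_{p,2m-1}=b_{p(2m-1)}$ and $d_{p+1,1}=b_{p(2m-1)+1}$ are consecutive on the spine. With your $q-1$ separating vertices, each full slice would consume $2m$ cut vertices, and the count would be periodic modulo $2m$. So ``this gives exactly $D_{m,t}$'' does not follow from what you wrote.

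Your proposed rigorous fallback inherits the error. The inequality $t-1\le\sum_p(2\ell_p-1)+r+(q-1)$ simplifies to $2\sum_p\ell_p+r\ge t$. Minimizing $\sum_p\ell_p+r$ under it gives only about $\ceil{t/2}$. For $m=2$ and $t=8$ it gives $4$, whereas $D_{2,8}=5$.

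The repair is to drop the $(q-1)$, and then the Observations are not needed at all. Take any $T\in\calC(G)$ and any $m$-compatible $\calP$ whose slices have $\ell_1,\dots,\ell_q$ vertices, with $r$ isolated vertices.
\begin{enumerate}[1]
    \item $|T|=|E(L(T))|=\sum_p(\ell_p-1)$.
    \item Every cut vertex $b_k\notin T$ lies in $V(L(T))$, so it is isolated or lies on a slice. Indeed, in \Cref{construct:L_T} we have $V_0\supseteq\widetilde{C}(G)\setminus T$, and a spine vertex of $[\alpha_i,\beta_i]$ other than $b_{j_i}$ equals $\alpha_i$ or $\beta_i$.
    \item Hence $t-1\le\sum_p(2\ell_p-1)+r=2c-q-r$, where $c\coloneqq\deg f_{\calP}=\sum_p\ell_p+r$.
    \item Since $\ell_p\le m$, we get $q\ge\ceil{(c-r)/m}$, hence $q+r\ge\ceil{c/m}$, and therefore $t-1\le 2c-\ceil{c/m}$.
    \item The map $c\mapsto 2c-\ceil{c/m}$ is strictly increasing. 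For $c=um+y$ with $0\le y<m$, it equals $u(2m-1)$ if $y=0$, and $u(2m-1)+2y-1$ if $y\ge 1$.
    \item So the least admissible $c$ is $\floor{\frac{t-1}{2m-1}}m$ when $A=0$, and $\floor{\frac{t-1}{2m-1}}m+\ceil{\frac{A+1}{2}}$ when $A\ge 1$. This is exactly $D_{m,t}$, because $\ceil{\frac{A+1}{2}}=\floor{\frac{A+1}{2}}+B$ for $A\ge 1$.
\end{enumerate}
The case $T=\emptyset$ (with $q=0$, $r=t-1$) is covered as well. This single inequality, valid for every pair $(T,\calP)$, replaces the step-by-step normalization that the paper only sketches. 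It also removes the termination and uniqueness issues you flagged.
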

\begin{proof}
    Let \(\calP\) be the optimal partition of \(E(L(T))\) constructed in \Cref{construct:T}. The polynomial \(f_{\calP}\) associated with this partition has degree \(D_{m,t}\). By the characterization of the local \(\v\)-number given in \Cref{thm:local_v_number_non_CM_closed}, together with the relevant observations in \Cref{obs:optimal_T_P}, it follows that \(\v(\calJ_{K_m,G}) = D_{m,t}\).
\end{proof}

\begin{Corollary}
    Let \(P_n\) denote a path  with \(n\) vertices. Then \(\v(\calJ_{K_m,P_n}) = D_{m,n-1}\), where \(D_{m,n-1}\) is defined as in \Cref{thm:v_number_CM_closed} with \(t = n-1\).
\end{Corollary}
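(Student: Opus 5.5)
The plan is to reduce the corollary directly to \Cref{thm:v_number_CM_closed}. The first step is to check that \(P_n\) meets the hypotheses of that theorem. Label the vertices of \(P_n\) as \(1,2,\dots,n\) along the path, so that \(E(P_n)=\{\{i,i+1\}\mid i\in[n-1]\}\). Then the closedness condition (b) holds vacuously: whenever \(\{i,k\}\in E(P_n)\) with \(i<k\), we have \(k=i+1\), so no \(j\) satisfies \(i<j<k\). Hence \(P_n\) is a connected closed graph.

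Next we identify its maximal cliques. These are exactly the edges \(F_i=[i,i+1]\) for \(i=1,\dots,n-1\). This is precisely the form \(F_i=[a_i,a_{i+1}]\) with \(a_i=i\), \(1=a_1<\cdots<a_n=n\), and \(t=n-1\) facets.

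We still need \(P_n\) to be Cohen--Macaulay in the sense used here, namely that \(S/\calJ_{K_2,P_n}\) is Cohen--Macaulay. There are two routes, and both are standard:
\begin{itemize}
\item by \cite[Theorem 3.1]{MR2863365}, a connected closed graph whose consecutive maximal cliques are intervals meeting in a single vertex, in the pattern just described, is Cohen--Macaulay;
\item alternatively, \(\calJ_{K_2,P_n}\) is a complete intersection, since its \(n-1\) quadratic generators have pairwise coprime initial monomials \(x_{1,i}x_{2,i+1}\) under the chosen lex order, hence form a regular sequence.
\end{itemize}

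With these facts in place, \Cref{thm:v_number_CM_closed} applies with \(t=n-1\). It gives \(\v(\calJ_{K_m,P_n})=D_{m,n-1}\), where \(A\) and \(B\) are computed from \(t-1=n-2\) as in \Cref{construct:T}.

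The only point that deserves care is that the theorem requires the maximal cliques to be exactly of the form \([a_i,a_{i+1}]\) with the given labeling, and this holds here immediately. There is no real obstacle. It is worth noting the degenerate case \(n=2\), where \(t=1\) and \(P_2=K_2\). There \(D_{m,1}=0\), consistent with \(\calJ_{K_m,K_2}\) being prime, and the formula still holds.
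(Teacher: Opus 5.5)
Your proposal is correct and follows the same route as the paper. The paper simply observes that $P_n$ is a connected Cohen--Macaulay closed graph with exactly $n-1$ maximal cliques and applies \Cref{thm:v_number_CM_closed} with $t=n-1$. Your extra checks (closedness under the natural labeling, the cliques $[i,i+1]$, Cohen--Macaulayness via the Ene--Herzog--Hibi characterization or the complete-intersection argument, and the $n=2$ case where $D_{m,1}=0$) only make explicit what the paper leaves to the reader.
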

\begin{proof}
    It suffices to observe that the path   \(P_n\) is a connected Cohen--Macaulay closed graph with exactly \(n-1\) maximal cliques. By \Cref{thm:v_number_CM_closed}, the assertion follows immediately.
\end{proof}

\begin{Corollary}
    \label{CMclsoedgraph}
    Let \(G\) be a Cohen--Macaulay closed graph with \(t\) maximal cliques. Then \(\v(\calJ_{K_2,G}) = \ceil{\frac{2(t-1)}{3}}\).
\end{Corollary}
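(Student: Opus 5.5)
The plan is to specialize \Cref{thm:v_number_CM_closed} to $m=2$ and then check, by elementary arithmetic, that the resulting expression equals $\ceil{\frac{2(t-1)}{3}}$. The corollary is stated for a Cohen--Macaulay closed graph that is possibly disconnected. In the connected case the theorem applies directly. If $G$ is disconnected, I would first note that \cite[Corollary 3.12]{MR4834446}, or its generalized version stated in \Cref{sec:gener}, makes $\v$ additive over components. Since the intended reading is presumably connected, I would treat the connected case as the main one; if disconnected graphs must be covered, I would remark that additivity alone does not reproduce the closed formula in terms of the total $t$, so the hypothesis must be read as connected, as throughout \Cref{sec:closegraph}.

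For $m=2$ we have $2m-1=3$. Write $t-1=3k+A$ with $k=\floor{\frac{t-1}{3}}$ and $A\in\{0,1,2\}$. Then $D_{2,t}=2k+\floor{\frac{A+1}{2}}+B$, where $A+1=2\floor{\frac{A+1}{2}}+B$ when $A>0$, and $B=0$ when $A=0$. I would split into the three residues.
\begin{itemize}
\item[(i)] $A=0$. Here $B=0$ by convention, and I must be careful, because the formula as written would give $\floor{\frac{1}{2}}=0$. So $D_{2,t}=2k=\ceil{\frac{2\cdot 3k}{3}}$.
\item[(ii)] $A=1$. Here $A+1=2$, so $\floor{\frac{A+1}{2}}=1$ and $B=0$. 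This gives $D_{2,t}=2k+1=\ceil{\frac{6k+2}{3}}$.
\item[(iii)] $A=2$. Here $A+1=3$, so $\floor{\frac{A+1}{2}}=1$ and $B=1$. This gives $D_{2,t}=2k+2=\ceil{\frac{6k+4}{3}}$.
\end{itemize}
In all three cases $D_{2,t}=\ceil{\frac{2(t-1)}{3}}$.

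The only delicate point is the convention for $A=0$: the term $\floor{\frac{A+1}{2}}$ in $D_{m,t}$ must be read as vanishing, consistently with \Cref{construct:T}, where no extra slice arises when $A=0$. I would justify this reading by pointing back to that construction. As a sanity check, the result agrees with Dey et al.'s formula for $\v(\calJ_{K_2,G})$ when $G$ is a Cohen--Macaulay closed graph.
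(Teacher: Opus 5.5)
Your proposal is correct and takes the same route as the paper: both specialize \Cref{thm:v_number_CM_closed} to $m=2$ (for connected $G$, as assumed throughout \Cref{sec:closegraph}) and check $D_{2,t}=\ceil{\frac{2(t-1)}{3}}$ by elementary arithmetic, and your three-residue computation is exactly that check. One small correction: at $A=0$ the term $\floor{\frac{A+1}{2}}=\floor{\frac{1}{2}}$ is already $0$ and needs no special reading; the convention that actually matters is $B=0$ (instead of the $B=1$ that $1=0\cdot 2+1$ would give), and you do use it correctly.
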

\begin{proof}
    It suffices to verify that \(D_{2,t} = \ceil{\frac{2(t-1)}{3}}\), where \(D_{2,t}\) is the expression defined in \Cref{thm:v_number_CM_closed} with \(m = 2\). This equality is elementary to check.
\end{proof}

\section{$\v$-number of powers of binomial edge ideals of  connected  Cohen--Macaulay closed graphs}
\label{sec:power}

In this section, we keep \(G\) as a connected Cohen--Macaulay closed graph. Our aim is to determine the \(\v\)-number of powers of the binomial edge ideal \(\calJ_{K_2,G}\).

Accordingly, we assume there exist integers \(1 = b_0 < b_1 < \cdots < b_{t-1} < b_t = n\) such that the maximal cliques of \(G\) are given by \(F_i = [b_{i-1}, b_i]\) for \(i \in [t]\). Let \(P\) be the spine of \(G\); that is, \(P\) is the path graph with vertex set \(V(P) = \{b_0, b_1, \dots, b_t\}\).

We first establish two basic lemmas concerning initial ideals and associated primes of powers of \(\calJ_{K_2,G}\).
These results will be used repeatedly throughout the section.

\begin{Lemma}
    \label{lem:power_initial}
    For every positive integer \(k\), we have  $\ini(\calJ_{K_2, G}^k)=(\ini(\calJ_{K_2, G}))^k$ and \(\Ass(\calJ_{K_2, G}^k)=\Ass(\calJ_{K_2, G})\).
\end{Lemma}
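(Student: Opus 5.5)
For the equality of initial ideals, I will first note that the containment \(\ini(\calJ_{K_2,G})^k\subseteq \ini(\calJ_{K_2,G}^k)\) is automatic, since the initial term of a product is the product of the initial terms. The reverse containment is the substantive part, and I plan to obtain it through the Rees algebra and Sagbi bases. Because \(G\) is closed, the quadratic generators \(f_{ij}=x_{1,i}x_{2,j}-x_{1,j}x_{2,i}\), for edges \(\{i,j\}\) with \(i<j\), form a Gröbner basis with initial terms \(x_{1,i}x_{2,j}\) by \cite[Theorem 1.3]{MR3290687}. The equality \(\ini(\calJ^k)=\ini(\calJ)^k\) for all \(k\) is equivalent to the statement that the elements \(f_{ij}t\), together with the variables, form a Sagbi basis of the Rees algebra \(\mathcal R(\calJ_{K_2,G})\).

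By Conca--Herzog--Valla, this Sagbi property reduces to a lifting property. Every relation among the initial monomials \(x_{1,i}x_{2,j}t\) (and variables) must lift to a relation among the \(f_{ij}t\). The toric ideal of the monomial algebra generated by \(\{x_{1,i}x_{2,j}:\{i,j\}\in E(G),\ i<j\}\) is generated by quadratic "crossing'' binomials \(x_{1,i}x_{2,j}\cdot x_{1,k}x_{2,l}-x_{1,i}x_{2,l}\cdot x_{1,k}x_{2,j}\). These need \(\{i,l\},\{k,j\}\in E(G)\), which holds in a closed graph by the interval structure of the cliques. The Rees-type relations come from the Gröbner basis syzygies.

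Each of these binomials lifts: the corresponding quadratic Plücker-type identities among the \(2\)-minors of the \(2\times n\) matrix supported on a clique \([b_{i-1},b_i]\) give the required lifts. This lifting step is where I expect the main obstacle. If it becomes unwieldy, I would instead cite the known result that for closed graphs \(\calJ_{K_2,G}^k\) has a quadratic-power Gröbner basis (Ene--Herzog and the work on powers of binomial edge ideals of closed graphs). In that approach, the products of \(k\) generators form a Gröbner basis of \(\calJ_{K_2,G}^k\), and this gives the equality directly.

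For the associated primes, I will use the fact that \(\ini(\calJ_{K_2,G})\) is the edge ideal of a bipartite graph, namely \(x_{1,i}\) adjacent to \(x_{2,j}\) for \(i<j\) with \(\{i,j\}\in E(G)\). Hence its powers coincide with its symbolic powers, so \(\ini(\calJ^k)=\ini(\calJ)^k\) has no embedded primes. By the standard semicontinuity argument via the Gröbner degeneration, \(\Ass(\calJ^k)\) is contained in the set of primes whose initial ideals are associated to \(\ini(\calJ)^k\). More concretely, I would show \(\calJ^k=\calJ^{(k)}\): equality of Hilbert functions of \(\calJ^k\) and \(\calJ^{(k)}\) follows from \(\ini(\calJ^{(k)})\subseteq \ini(\calJ)^{(k)}=\ini(\calJ)^k=\ini(\calJ^k)\) together with \(\calJ^k\subseteq \calJ^{(k)}\). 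Once \(\calJ^k=\calJ^{(k)}=\bigcap_T\frakP_T^{(k)}\), and since each \(\frakP_T\) is a determinantal-type prime of a generic matrix plus variables whose symbolic and ordinary powers agree (a \(2\)-minor ideal of a \(2\times r\) generic matrix is of maximal minors), we obtain \(\Ass(\calJ^k)=\{\frakP_T\}=\Ass(\calJ)\).
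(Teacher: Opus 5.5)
Your last paragraph is, in substance, the argument behind the result the paper invokes; the paper's proof is just a citation of Ene--Herzog's Corollary~3.4 and the proof of their Lemma~3.1. Writing $\calJ=\calJ_{K_2,G}$, the chain
\[
\ini(\calJ^{(k)})\subseteq \ini(\calJ)^{(k)}=\ini(\calJ)^k\subseteq \ini(\calJ^k)\subseteq \ini(\calJ^{(k)})
\]
forces equality throughout. Together with $\calJ^k\subseteq\calJ^{(k)}$, this yields $\calJ^k=\calJ^{(k)}$ and $\ini(\calJ^k)=\ini(\calJ)^k$ at once.

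So your first part is not needed as an input, and you can drop the Sagbi/Rees-algebra route. As written it is unfinished anyway: quadratic generation of the fiber toric ideal, the fiber-type property and the lifts are only asserted, and the fallback is a citation of the statement itself. Also, once $\calJ^k=\calJ^{(k)}$, the equality $\Ass(\calJ^k)=\Ass(\calJ)$ is immediate, because each $\frakP_T^{(k)}$ is $\frakP_T$-primary by definition. Whether $\frakP_T^{(k)}=\frakP_T^k$ plays no role.

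\textbf{The gap.} The first inclusion $\ini(\calJ^{(k)})\subseteq\ini(\calJ)^{(k)}$ carries the whole argument, and you assert it without justification. It is not formal: it is a property of Gr\"obner degenerations with reduced special fibre, and you must cite or prove it. One proof runs as follows.
\begin{enumerate}[1]
\item Pick a weight $w$ with $\ini_w(\calJ)=\ini(\calJ)$ and $\ini_w(\calJ^{(k)})=\ini(\calJ^{(k)})$. Let $\widetilde{\calJ}\subset S[s]$ be the homogenized family, so $s$ is regular on $S[s]/\widetilde{\calJ}$ and $\widetilde{\calJ}+(s)=\ini(\calJ)+(s)$.
\item Fix a minimal prime $Q$ of $\ini(\calJ)$. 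The ring $(S[s]/\widetilde{\calJ})_{Q+(s)}$ has $s$ regular and reduces modulo $s$ to $(S/\ini(\calJ))_Q$. This is a field because $\ini(\calJ)$ is squarefree; this is exactly where squarefreeness enters. Hence the ring is a DVR.
\item Consequently a unique minimal prime $\widetilde{\frakP}$ of $\widetilde{\calJ}$ lies in $Q+(s)$. In $S[s]_{Q+(s)}$ it is generated by elements $y_1,\dots,y_h$ that complete $s$ to a regular system of parameters.
\item For $f\in\calJ^{(k)}$, the homogenization $\tilde f$ lies in $\widetilde{\frakP}^{(k)}$. Since powers of ideals generated by part of a regular system of parameters are primary, the image of $\tilde f$ in $S[s]_{Q+(s)}$ lies in $(y_1,\dots,y_h)^k$.
\item Setting $s=0$ gives $\ini_w(f)\in Q^kS_Q\cap S=Q^k$. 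As $Q$ was arbitrary, $\ini_w(f)\in\ini(\calJ)^{(k)}$.
\end{enumerate}

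Finally, delete the ``semicontinuity'' sentence; it is false as stated. Since $\ini(\frakP_\emptyset)=(x_{1,i}x_{2,j}\mid i<j)$ is not prime, your containment would exclude the minimal prime $\frakP_\emptyset$ from $\Ass(\calJ^k)$.
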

\begin{proof}
    It follows from \cite[Corollary 3.4]{MR4143239} that \(\calJ_{K_2, G}^k = \calJ_{K_2, G}^{(k)}\), where \(\calJ_{K_2, G}^{(k)}\) denotes the \(k\)-th symbolic power of \(\calJ_{K_2, G}\). Since \(\calJ_{K_2,G}\) is radical, we have \(\Ass(\calJ_{K_2, G}^k) = \Ass(\calJ_{K_2, G}^{(k)}) = \Ass(\calJ_{K_2, G})\). Moreover, the proof of \cite[Corollary 3.4]{MR4143239} implies that \cite[Lemma 3.1]{MR4143239} is applicable. In particular, the proof of \cite[Lemma 3.1]{MR4143239} yields \(\ini(\calJ_{K_2, G}^k) = (\ini(\calJ_{K_2, G}))^k\).
\end{proof}

In what follows, for an edge \(\varepsilon=\{k,l\}\in E(G)\) with \(1\le k<l\le n\), we set \(f_\varepsilon\coloneqq x_{1,k}x_{2,l}-x_{1,l}x_{2,k}\in S=\KK[x_{1,1},\dots,x_{2,n}]\).

We next prove a key colon ideal identity that controls the behavior of powers with respect to the extremal edge \(\{1,2\}\).

\begin{Lemma}
    \label{lem:power_colon_g}
    For every positive integer \(k\), we have \((\calJ_{K_2, G}^k:g) = \calJ_{K_2, G}^{k-1}\), where \(g\coloneqq f_{\{1,2\}}\).
\end{Lemma}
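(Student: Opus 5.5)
The inclusion \(\calJ_{K_2,G}^{k-1}\subseteq (\calJ_{K_2,G}^k:g)\) is immediate, because \(g=f_{\{1,2\}}\in\calJ_{K_2,G}\). The work is the reverse inclusion. I plan to handle it with Gröbner degenerations and the identity \(\ini(\calJ_{K_2,G}^k)=(\ini(\calJ_{K_2,G}))^k\) from \Cref{lem:power_initial}.

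Write \(J=\calJ_{K_2,G}\) and \(I=\ini(J)=(x_{1,k}x_{2,l}\mid k<l,\ \{k,l\}\in E(G))\). The key observation is that \(\ini(g)=x_{1,1}x_{2,2}\), and vertex \(1\) is simplicial, lying only in \(F_1=[1,b_1]\). The variable \(x_{1,1}\) appears in a generator of \(I\) only in the products \(x_{1,1}x_{2,l}\) with \(l\in[2,b_1]\). I would first prove the monomial analogue
\[
    (I^k : x_{1,1}x_{2,2}) = I^{k-1}.
\]
Containment ``\(\supseteq\)'' is clear. For ``\(\subseteq\)'', take a monomial \(u\) with \(u\,x_{1,1}x_{2,2}\in I^k\). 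Then \(u x_{1,1}x_{2,2}\) is divisible by a product \(w_1\cdots w_k\) of generators of \(I\). Each \(w_i\) has exactly one variable of the form \(x_{1,\ast}\) and one of the form \(x_{2,\ast}\).

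If no \(w_i\) uses \(x_{1,1}\) or \(x_{2,2}\), then \(u\in I^k\). Otherwise I would do an exchange argument. Suppose \(w_i=x_{1,1}x_{2,l}\) and \(w_j=x_{1,a}x_{2,2}\), which forces \(a=1\) (since \(a<2\)). Either these are the same generator, which we delete, or we replace the pair \(w_i,w_j\) by \(x_{1,1}x_{2,2}\) together with a product that still divides \(u\). The main difficulty here is making sure that, after stripping \(x_{1,1}x_{2,2}\), the leftover variables still form \(k-1\) generators of \(I\). This is where the closedness and the simplicial position of vertex \(1\) are used. For instance, if \(w_i=x_{1,1}x_{2,l}\) and \(w_j=x_{1,1'}x_{2,2}\), both \(x_{1,1}\) factors need accounting for. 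If \(x_{2,2}\) is used by \(w_j=x_{1,1}x_{2,2}\) while \(x_{1,1}\) is used by a different \(w_i=x_{1,1}x_{2,l}\), then \(x_{1,1}^2\) divides \(u x_{1,1}\), so \(x_{1,1}\mid u\) and we can simply remove \(w_j\). A cleaner route might be to note that \(I\) is the edge ideal of a bipartite graph, hence normal, with linear-quotient behaviour. I would rely on a careful case analysis on which \(w_i\) contain \(x_{1,1}\) and \(x_{2,2}\).

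Next I would lift to \(J\). Take \(h\) with \(hg\in J^k\), and proceed by induction on \(\ini(h)\) (a Noetherian-type descent on leading monomials). We have \(\ini(h)\ini(g)=\ini(hg)\in\ini(J^k)=I^k\), so \(\ini(h)\in (I^k:x_{1,1}x_{2,2})=I^{k-1}=\ini(J^{k-1})\). Hence there is \(h'\in J^{k-1}\) with \(\ini(h')=\ini(h)\) and the same leading coefficient. Then \(h-h'\) satisfies \((h-h')g\in J^k\), because \(h'g\in J^k\), and \(\ini(h-h')<\ini(h)\). The monomial order is a well-order, so the descent terminates, and \(h\in J^{k-1}\).

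I expect the main obstacle to be the monomial colon computation \((I^k:x_{1,1}x_{2,2})=I^{k-1}\). If the direct exchange argument gets messy, my first alternative would be to use that \(I\) is the edge ideal of a bipartite graph. Its powers are then integrally closed, with a degree and multidegree description via matchings. I would then verify that any multidegree in \(I^k\) divisible by the leaf edge \(x_{1,1}x_{2,2}\) admits a \(k\)-matching containing that edge. The reason is that \(x_{2,2}\) is adjacent only to \(x_{1,1}\) among the \(x_{1,\ast}\)-variables.
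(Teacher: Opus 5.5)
Your proposal is correct and follows essentially the same route as the paper: both use $\ini(\calJ_{K_2,G}^k)=(\ini(\calJ_{K_2,G}))^k$ to reduce to the monomial identity $\bigl(I^k : x_{1,1}x_{2,2}\bigr)=I^{k-1}$ with $I=\ini(\calJ_{K_2,G})$. Your leading-term descent is an unpacked form of the paper's remark that $\calJ_{K_2,G}^{k-1}\subseteq(\calJ_{K_2,G}^k:g)$ together with $\ini(\calJ_{K_2,G}^k:g)\subseteq\ini(\calJ_{K_2,G}^{k-1})$ forces equality. The monomial step is easier than you anticipate (the paper calls it immediate): since $x_{1,1}x_{2,2}$ is the only generator of $I$ involving $x_{2,2}$, either it is one of the $w_i$ and you delete it, or $w_1\cdots w_k$ divides $u\,x_{1,1}$ and you delete any $w_i$ containing $x_{1,1}$ (if none does, $u\in I^k$ already), so no exchange argument or normality of bipartite edge ideals is needed.
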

\begin{proof}
    Since \(\{1,2\} \in E(G)\), the case \(k = 1\) is trivial; we therefore assume \(k \ge 2\). It is clear that \(\calJ_{K_2, G}^{k-1} \subseteq (\calJ_{K_2, G}^k : g)\). Thus, it suffices to show \(\ini(\calJ_{K_2, G}^k : g) \subseteq \ini(\calJ_{K_2, G}^{k-1})\).

    To establish this, let \(f \in (\calJ_{K_2, G}^k : g)\). By \Cref{lem:power_initial}, we have \(\ini(f)\ini(g) \in \ini(\calJ_{K_2, G}^k) = (\ini(\calJ_{K_2, G}))^k\). Note that \(\ini(g) = x_{1,1}x_{2,2}\), and \(x_{1,1}x_{2,2}\) is the unique minimal monomial generator of \(\ini(\calJ_{K_2, G})\) divisible by \(x_{2,2}\). It follows immediately that \(((\ini(\calJ_{K_2, G}))^k : \ini(g)) = (\ini(\calJ_{K_2, G}))^{k-1}\). This implies \(\ini(f) \in (\ini(\calJ_{K_2, G}))^{k-1}\), and hence \(\ini(\calJ_{K_2, G}^k : g) \subseteq \ini(\calJ_{K_2, G}^{k-1})\), as desired.
\end{proof}

To understand the local \(\v\)-numbers of powers, we first analyze the case where \(G\) is a path graph, which serves as the building block for all connected Cohen--Macaulay closed graphs.

\begin{Lemma}
    \label{lem:initial_works_for_path}
    Let \(G = P\) be a path graph and let \(T\) be a cut set of \(G\). For any monomial
    \[
        h \in (\ini(\calJ_{K_2,G}) : \ini(\frakP_T(K_2,G))) \setminus \ini(\calJ_{K_2,G}),
    \]
    we have \(\deg(h) \ge \v_{T}(\calJ_{K_2,G})\).
\end{Lemma}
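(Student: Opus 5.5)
The plan is to rerun, purely at the level of monomials, the argument of \Cref{prop:local_v_number_path} in the case \(m=2\) and \(G=P\). Since \(G\) is a path on \([n]\), we have \(b_i=i+1\) for all \(i\). Hence \(\ini(\calJ_{K_2,G})=(x_{1,a}x_{2,a+1}\mid a\in[n-1])\). The cut set \(T\) consists of interior vertices, no two of them adjacent. By the construction in \Cref{construct:L_T}, for \(v=b_{j_i}\in T\) we get \(\alpha_i=v-1\) and \(\beta_i=v+1\), and \(\beta_i=\alpha_{i+1}\) exactly when the next element of \(T\) is \(v+2\). The isolated vertices of \(L(T)\) are exactly the interior vertices \(l\notin T\) with \(l-1,l+1\notin T\). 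For \(T=\emptyset\) we use the convention of the Remark: these are all the vertices \(2,\dots,n-1\), that is, \(\gamma_c^{*}(P)\) many. By \Cref{thm:local_v_number_non_CM_closed}, and since \(\calP_0\) is the unique \(2\)-compatible partition, we have
\[
\v_T(\calJ_{K_2,G})=\deg(f_T)=2s+\#\{\text{isolated vertices of }L(T)\}.
\]
So it suffices to exhibit a squarefree monomial of exactly this degree that divides \(h\).

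\emph{Step 1 (vertices of \(T\)).} Let \(v\in T\). Then \(x_{1,v},x_{2,v}\in\ini(\frakP_T(K_2,G))\), so \(hx_{1,v}\in\ini(\calJ_{K_2,G})\). Some generator \(x_{1,a}x_{2,a+1}\) divides \(hx_{1,v}\). Since \(h\notin\ini(\calJ_{K_2,G})\), this generator must use \(x_{1,v}\), so \(a=v\) and \(x_{2,v+1}\mid h\). Symmetrically, from \(hx_{2,v}\) we get \(x_{1,v-1}\mid h\). Thus \(x_{1,\alpha_i}x_{2,\beta_i}\mid h\) for each \(i\in[s]\).

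\emph{Step 2 (isolated vertices).} Let \(l\) be an isolated vertex of \(L(T)\). Then \(l-1\) and \(l+1\) lie in the same component of \(G\setminus T\). Since \(\ini(\frakP_T(K_2,G))\) is the sum of the variables indexed by \(T\) and the lex initial ideals of determinantal ideals of complete graphs on the components, it contains \(x_{1,l-1}x_{2,l+1}\). Hence \(hx_{1,l-1}x_{2,l+1}\in\ini(\calJ_{K_2,G})\), and some generator \(x_{1,a}x_{2,a+1}\) divides it. It cannot divide \(h\) alone. It cannot use both new variables, because \((l+1)-(l-1)\neq 1\). So either \(a=l-1\), which forces \(x_{2,l}\mid h\), or \(a+1=l+1\), which forces \(x_{1,l}\mid h\). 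In both cases \(x_{\gamma_l,l}\mid h\) for some \(\gamma_l\in[2]\).

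\emph{Step 3 (counting).} Take \(M\) to be the lcm of the monomials found in Steps 1 and 2; it divides \(h\). Its degree is the main point to check, and is the only place where overlaps could occur. When \(\beta_i=\alpha_{i+1}=w\), the two contributions are \(x_{2,w}\) and \(x_{1,w}\), which are distinct variables. Apart from that, the \(\alpha_i,\beta_i\) for distinct \(i\) are distinct vertices. Isolated vertices of \(L(T)\) are never of the form \(\alpha_i\) or \(\beta_i\). Therefore all the variables are distinct, \(\deg M=2s+\#\{\text{isolated vertices}\}=\deg(f_T)\), and hence \(\deg(h)\ge \v_T(\calJ_{K_2,G})\).

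I expect no real obstacle here. The only care needed is in identifying \(\alpha_i,\beta_i\) and the isolated vertices for a path, including the boundary cases where \(b_1\) or \(b_{t-1}\) lies in \(T\) and the case \(T=\emptyset\), and in verifying that \(\ini(\frakP_T(K_2,G))\) contains the quadrics \(x_{1,l-1}x_{2,l+1}\) used in Step 2.
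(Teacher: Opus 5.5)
Your proposal is correct and follows the paper's proof essentially step for step. The paper likewise extracts \(x_{1,v-1}x_{2,v+1}\) from each \(v\in T\), and a variable \(x_{1,u}\) or \(x_{2,u}\) from each isolated vertex \(u\) of \(L(T)\) via \(x_{1,u-1}x_{2,u+1}\in\ini(\frakP_T(K_2,G))\); it then notes that the lcm is a product of distinct variables of degree \(\deg(f_T)=\v_T(\calJ_{K_2,G})\), a distinctness check (e.g.\ \(x_{2,w}\neq x_{1,w}\) when \(\beta_i=\alpha_{i+1}=w\)) that your Step 3 carries out explicitly where the paper calls it straightforward.
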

\begin{proof}
    Recall that for the cut set \(T\), we may construct the graph \(L(T)\). Since \(m = 2\), there exists a unique \(2\)-compatible partition of \(E(L(T))\), which is trivial in the sense that each equivalence class contains exactly one edge. Let \(\calP_0\) denote this unique \(2\)-compatible partition. By abuse of notation, we write \(f_T\) for the polynomial \(f_{\calP_0}\). It then follows from \Cref{thm:local_v_number_non_CM_closed} that \(\deg(f_T) = \v_{T}(\calJ_{K_2,G})\).

    For the given monomial \(h\), we make the following observations:
    \begin{enumerate}[a]
        \item For each vertex \(v\in T\), since \(x_{1,v},x_{2,v}\in \ini(\frakP_T(K_2,G))\) and
            \[
                h\,\ini(\frakP_T(K_2,G))\subseteq \ini(\calJ_{K_2, G})=(x_{1,i}x_{2,i+1}\mid i\in [n-1]),
            \]
            we conclude that both \(x_{2,v+1}\) and \(x_{1,v-1}\) divide \(h\).  In other words, \(h\) is divisible by \(x_{1,v-1}x_{2,v+1}\).  Note that \(\{v-1,v+1\}\) is an edge of \(L(T)\). Neither \(v-1\) nor \(v+1\) is an isolated vertex of \(L(T)\).

        \item For each isolated vertex \(u\) of \(L(T)\), the vertices \(u-1\) and \(u+1\) lie in the same connected component of \(G\setminus T\).  Hence \(x_{1,u-1}x_{2,u+1}\in \ini(\frakP_T(K_2,G))\).  Since \(h\,\ini(\frakP_T(K_2,G))\subseteq \ini(\calJ_{K_2, G})\), it follows that \(h\) is divisible by either \(x_{1,u}\) or \(x_{2,u}\).  For simplicity, we denote this element by \(z_u\).
    \end{enumerate}

    From the above reasoning, we deduce that \(h\) is divisible by
    \begin{equation}
        h'=\lcm\bigl(\{\uline{x_{1,v-1}x_{2,v+1}}\mid v\in T\}\cup\{\uline{z_u}\mid \text{$u$ is an isolated vertex of $L(T)$}\}\bigr).
        \label{eqn:h'}
    \end{equation}
    It is straightforward to check that \(h'\) is exactly the product of the underlined monomials in \eqref{eqn:h'} and that \(\deg(h')=\deg(f_T)\).  Accordingly, we obtain \(\deg(h)\ge \v_{T}(\calJ_{K_2, G})\).
\end{proof}

Using the preceding lemma, we can now determine the local \(\v\)-numbers for powers of binomial edge ideals over path graphs.

\begin{Proposition}
    \label{powers1}
    Let \(G = P\) be a path graph on vertex set \([n]\) with \(n \ge 3\), and let \(T\) be a cut set of \(G\). Then for any integer \(k \ge 1\),
\[
\v_{T}(\calJ_{K_2,G}^k) = \v_{T}(\calJ_{K_2,G}) + 2(k - 1).
\]
\end{Proposition}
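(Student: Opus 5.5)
We prove the two inequalities separately, writing \(J\coloneqq \calJ_{K_2,G}\) and \(\frakP\coloneqq \frakP_T(K_2,G)\). By \Cref{lem:power_initial}, \(\frakP\in \Ass(J^k)\) for every \(k\), so \(\v_T(J^k)\) is defined.

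\emph{Upper bound.} Let \(f_T\) be a polynomial of degree \(\v_T(J)\) with \((J:f_T)=\frakP\), as in \Cref{lem:closed_f_works} and \Cref{thm:local_v_number_non_CM_closed}. Set \(g=f_{\{1,2\}}\). We claim that \((J^k:g^{k-1}f_T)=\frakP\). Iterating \Cref{lem:power_colon_g} gives \((J^k:g^{k-1})=J\). Hence
\[
(J^k:g^{k-1}f_T)=((J^k:g^{k-1}):f_T)=(J:f_T)=\frakP,
\]
and therefore \(\v_T(J^k)\le \v_T(J)+2(k-1)\).

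\emph{Lower bound.} Let \(f\) be homogeneous with \((J^k:f)=\frakP\). By the analogue of \Cref{lem:change_initial} for the ideal \(J^k\), we may assume \(\ini(f)\notin \ini(J^k)=(\ini J)^k\). Passing to initial ideals gives \(\ini(f)\,\ini(\frakP)\subseteq \ini(J^k)=(\ini J)^k\). Write \(u=\ini(f)\) and \(I=\ini(J)=(x_{1,i}x_{2,i+1}\mid i\in[n-1])\), where \(G\) is a path. Let \(r\) be the largest integer with \(u\in I^{r}\); by assumption \(r\le k-1\). The aim is to factor \(u=w\,h\) with \(w\) a product of \(k-1\) generators of \(I\) and
\[
h\in (I:\ini(\frakP))\setminus I.
\]
\Cref{lem:initial_works_for_path} then gives \(\deg(h)\ge \v_T(J)\), so \(\deg(f)\ge 2(k-1)+\v_T(J)\).

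The factorization is the main obstacle. The ideal \(I\) is the edge ideal of a bipartite path \(x_{1,1}-x_{2,2}\), \(x_{1,2}-x_{2,3}\), \(\dots\); it is a matching of disjoint edges, so it is a complete intersection of the variables paired along disjoint edges. For such a monomial complete intersection, the powers are well understood:
\[
(I^k:z)=I^{k-1} \quad\text{whenever } z\in \ini(\frakP) \text{ is a generator with } uz\in I^k \text{ but } u\notin I^k .
\]
Concretely, write \(u=\prod_i (x_{1,i}x_{2,i+1})^{e_i}\cdot u'\), where \(u'\) contains no generator pair in full and \(\sum e_i=r\). For each generator \(z\) of \(\ini(\frakP)\), the condition \(uz\in I^k\) forces \(r=k-1\), and \(z\) must complete a new pair with \(u'\). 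Here one uses that \(I\) is generated by monomials in disjoint variable pairs, so membership in \(I^k\) amounts to counting pairs. Consequently \(u'\,\ini(\frakP)\subseteq I\) and \(u'\notin I\), and we take \(h=u'\) and \(w=\prod_i (x_{1,i}x_{2,i+1})^{e_i}\), of degree \(2(k-1)\).

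One point needs care: \(\ini(\frakP)\) may contain generators such as \(x_{1,u-1}x_{2,u+1}\), which are not single variables. For these, one checks that the new pair must still be completed using one of the variables \(x_{1,u}\) or \(x_{2,u}\) of \(u'\), or else \(u\) already lies in \(I^k\). This is exactly the case analysis in \Cref{lem:initial_works_for_path}, repeated with a multiplicity count.
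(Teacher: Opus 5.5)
Your upper bound matches the paper. The overall shape of your lower bound also matches: strip off \(k-1\) generators of the complete intersection \(I=\ini(J)\), then apply \Cref{lem:initial_works_for_path} to the cofactor \(u'\). For \(T\neq\emptyset\) the argument goes through as you intend. A linear generator \(x_{1,v}\) with \(v\in T\) raises the pair count by at most one, so \(u\,x_{1,v}\in I^k\) together with \(u\notin I^k\) forces \(r=k-1\), which is exactly what the paper uses.

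The gap is in how you force \(r=k-1\) in general. Your assertion that every generator \(z\) of \(\ini(\frakP)\) with \(uz\in I^k\) forces \(r=k-1\) is false for \(z=x_{1,a}x_{2,b}\) with \(b\ge a+2\), because such a \(z\) can complete two pairs at once. For \(n=3\), \(T=\emptyset\), \(k=2\), take \(u=x_{1,2}x_{2,2}\) and \(z=x_{1,1}x_{2,3}\). Then \(uz=(x_{1,1}x_{2,2})(x_{1,2}x_{2,3})\in I^2\) although \(u\notin I\), i.e.\ \(r=0=k-2\). So your fallback ``or else \(u\) already lies in \(I^k\)'' is wrong; the actual alternative is \(r=k-2\) with \(z\) completing two pairs. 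When \(T=\emptyset\), \(\ini(\frakP)\) has no linear generators, and nothing in your write-up excludes this alternative. Similarly, the displayed identity \((I^k:z)=I^{k-1}\) holds for \(z\) a generator of \(I\) but not for the other generators of \(\ini(\frakP)\); e.g.\ \((I^k:x_{1,v})=I^k+x_{2,v+1}I^{k-1}\). The empty cut set is precisely the case the paper handles separately. It does so through the case analysis (i), (ii), (a), (b), plus the explicit computation of \((\ini(\calJ_{K_2,G}^2):\ini(\frakP_\emptyset(K_2,G)))\) when \(n=3\).

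The missing idea is short. Since \(J\subseteq\frakP\), you have \(I\subseteq\ini(\frakP)\), so in particular \(x_{1,1}x_{2,2}\in\ini(\frakP)\). Because the generators of \(I\) are pairwise coprime, multiplying a monomial by one of them raises its pair count by exactly one (the computation behind \Cref{lem:power_colon_g}). Hence \(u\,x_{1,1}x_{2,2}\in I^k\) gives \(r+1\ge k\), so \(r=k-1\) for every cut set, including \(T=\emptyset\). Your additivity observation then applies: the pair count of \(w u' z\) equals \(k-1\) plus that of \(u'z\). This yields \(u'\,\ini(\frakP)\subseteq I\) and \(u'\notin I\), and \Cref{lem:initial_works_for_path} finishes. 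With this line added, your proof covers all \(T\) uniformly and is shorter than the paper's treatment of the empty cut set. Without it, the case \(T=\emptyset\) is not established.
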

\begin{proof}
    Let \(f\) be a homogeneous polynomial in \(S\) such that \((\calJ_{K_2, G}:f) = \frakP_T(K_2,G)\) and \(\deg(f) = \v_T(\calJ_{K_2, G})\).  By \Cref{lem:power_colon_g}, it follows that \((\calJ_{K_2, G}^k:g^{k-1}f) = \frakP_T(K_2,G)\), where \(g = f_{\{1,2\}}\). Since \(\Ass(\calJ_{K_2, G}^k) = \Ass(\calJ_{K_2, G})\), this implies \(\v_{T}(\calJ_{K_2, G}^k) \le \deg(g^{k-1}f) = \v_{T}(\calJ_{K_2, G}) + 2(k-1)\). It remains to show that \(\v_{T}(\calJ_{K_2, G}^k) \ge \v_{T}(\calJ_{K_2, G}) + 2(k-1)\), which we establish below.

    Let \(h\) be a homogeneous polynomial such that \((\calJ_{K_2, G}^k:h) = \frakP_T(K_2,G)\).  We will show that
    \begin{equation}
        \deg(h) \ge \v_{T}(\calJ_{K_2, G}) + 2(k-1).
        \label{eqn:deg_h}
    \end{equation}
    By \Cref{lem:change_initial}, we may assume \(\ini(h) \notin \ini(\calJ_{K_2, G}^k)\). Note that \(\ini(h)\ini(\frakP_T(K_2,G)) \subseteq \ini(\calJ_{K_2, G}^k) = (\ini(\calJ_{K_2, G}))^k\), where the last equality holds by \Cref{lem:power_initial}.

    If \(T \ne \emptyset\), take an arbitrary \(v \in T\). Since \(x_{1,v} \in \ini(\frakP_T(K_2,G))\) and \(\ini(h) \notin \ini(\calJ_{K_2, G}^k)\), it follows that \(\ini(h) = x_{2,v+1}\ini(f_{e_1})\cdots \ini(f_{e_{k-1}})h'\) for some edges \(e_1,\dots,e_{k-1} \in E(G)\) and some monomial \(h' \in S\). Since \(\ini(\calJ_{K_2, G})\) is a complete intersection, 
    \[
        \left((\ini(\calJ_{K_2, G}))^k: \ini(f_{e_1})\cdots \ini(f_{e_{k-1}})\right) = \ini(\calJ_{K_2, G})
    \]
    by \cite[Exercise 1.1.15]{MR1251956}. This implies
    \[
        x_{2,v+1}h' \in \left((\ini(\calJ_{K_2, G}))^k: \ini(f_{e_1})\cdots \ini(f_{e_{k-1}})\ini(\frakP_T(K_2,G))\right) = \left(\ini(\calJ_{K_2, G}):\ini(\frakP_T(K_2,G))\right).
    \]
    As a consequence, \Cref{lem:initial_works_for_path} yields 
    \[
        \deg(h) - 2(k-1) = \deg(x_{2,v+1}h') \ge \v_{T}(\calJ_{K_2, G}).
    \]
    Thus, the inequality \eqref{eqn:deg_h} holds in this case.

    If \(T = \emptyset\), then \(\v_{\emptyset}(\calJ_{K_2, G}) = n-2\) by Corollary \ref{cor:v_empty}. For each \(v \in [n-2]\), since \(x_{1,v}x_{2,v+2} \in \ini(\frakP_\emptyset(K_2,G))\) and \(\ini(h) \notin \ini(\calJ_{K_2, G}^k)\), either 
    \[
        x_{2,v+2}\ini(h) =x_{2,v+1}\ini(f_{e_1})\cdots \ini(f_{e_{k-1}})h'
    \]
    or 
    \[
        x_{1,v}\ini(h) = x_{1,v+1}\ini(f_{e_1})\cdots \ini(f_{e_{k-1}})h'
    \]
    for some edges \(e_1,\dots,e_{k-1} \in E(G)\) and some monomial \(h' \in S\).

    \begin{enumerate}[i]
        \item Suppose \(x_{2,v+2}\ini(h) =x_{2,v+1}\ini(f_{e_1})\cdots \ini(f_{e_{k-1}})h'\). If \(x_{2,v+2}\) divides \(h'\), then \(\ini(h)\) is divisible by \(\ini(f_{e_1})\cdots \ini(f_{e_{k-1}})\). Using the complete intersection property, the remainder of the proof proceeds similarly to the \(T\ne \emptyset\) case. Otherwise, we may assume \(x_{2,v+2}\) divides \(\ini(f_{e_{k-1}})\). Since \(\ini(f_{e_{k-1}})\) must equal \(x_{1,v+1}x_{2.v+2}\), this implies \(x_{1,v+1}x_{2,v+1}\ini(f_{e_1})\cdots \ini(f_{e_{k-2}})\) divides \(\ini(h)\).

        \item Suppose \(x_{1,v}\ini(h) = x_{1,v+1}\ini(f_{e_1})\cdots \ini(f_{e_{k-1}})h'\). This case is analogous to the previous one.
    \end{enumerate}
    Thus, we only need to consider the case where, for each \(v \in [n-2]\), there exists a minimal monomial generator \(F_{v}\) of \(\ini(\calJ_{K_2, G}^{k-2})\) such that \(F_{v} x_{1,v+1}x_{2,v+1}\) divides \(\ini(h)\).

    \begin{enumerate}[a]
        \item Suppose there exist \(v,v' \in [n-2]\) such that \(F_{v} \ne F_{v'}\).  Since both \(F_v\) and \(F_{v'}\) divide \(\ini(h)\), Then \(\lcm(F_{v},F_{v'})\) divides \(\ini(h)\), and \(\deg(\lcm(F_{v},F_{v'})) \ge 2(k-1)\). This implies \(\ini(h)\) is divisible by \(\ini(f_{e_1})\cdots \ini(f_{e_{k-1}})\) for some edges \(e_1,\dots,e_{k-1} \in E(G)\). Using the complete intersection property, the remainder of the proof proceeds similarly to the \(T\ne \emptyset\) case.

        \item Otherwise, \(F_1 = \cdots = F_{n-2}\). Let \(F\) denote this common monomial. Then \(\ini(h)\) is divisible by \(Fx_{1,2}x_{2,2}x_{1,3}x_{2,3}\cdots x_{1,n-1}x_{2,n-1}\).

            When \(n \ge 4\), this implies
            \[
                \deg(h) \ge 2(k-2) + 2(n-2) \ge 2(k-1) + (n-2) = 2(k-1) + \v_{\emptyset}(\calJ_{K_2, G}),
            \]
            as desired.

            When \(n = 3\), using the complete intersection property, we have 
            \[
                \ini(h)/F \in (\ini(\calJ_{K_2, G}^k):F\ini(\frakP_\emptyset(K_2,G)))= (\ini(\calJ_{K_2, G}^2):\ini(\frakP_\emptyset(K_2,G))).
            \]
            It is straightforward to verify that
            \begin{align*}
                (\ini(\calJ_{K_2, G}^2):\ini(\frakP_\emptyset(K_2,G)))&=(x_{1,1}x_{2,2},x_{1,2}x_{2,3})^2:(x_{1,1}x_{2,2},x_{1,2}x_{2,3},x_{1,1}x_{2,3}) \\
                &= (x_{1,2}x_{2,2}x_{2,3}, x_{1,2}^2x_{2,3}, x_{1,1}x_{2,2}^2, x_{1,1}x_{1,2}x_{2,2}).
            \end{align*}
            Thus, \(\deg(h) - 2(k-2) \ge 3\), i.e., \(\deg(h) \ge 2(k-1) + \v_{\emptyset}(\calJ_{K_2, G})\), as desired. \qedhere
    \end{enumerate}
\end{proof}

Having established the formula for path graphs, we now return to the general case of connected Cohen--Macaulay closed graphs.
The spine subgraph allows us to reduce the general case to the path graph case.

\begin{Theorem} 
    \label{powers2}
    Let \(G\) be a connected Cohen--Macaulay closed graph. For every positive integer \(k\), we have
    \[
        \v(\calJ_{K_2,G}^k) = \v(\calJ_{K_2,G})+2(k - 1).
    \]
\end{Theorem}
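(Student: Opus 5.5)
We prove the two inequalities separately, modelling both directions on \Cref{powers1} and using the spine to reduce to the path case.

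\textbf{Upper bound.} Choose a cut set \(T\in\calC(G)\) and a homogeneous \(f\) with \((\calJ_{K_2,G}:f)=\frakP_T(K_2,G)\) and \(\deg(f)=\v(\calJ_{K_2,G})\). Put \(g=f_{\{1,2\}}\), which lies in \(\calJ_{K_2,G}\) because \(\{1,2\}\in E(G)\). Applying \Cref{lem:power_colon_g} repeatedly gives
\[
(\calJ_{K_2,G}^k:g^{k-1}f)=\bigl((\calJ_{K_2,G}^k:g^{k-1}):f\bigr)=(\calJ_{K_2,G}:f)=\frakP_T(K_2,G).
\]
By \Cref{lem:power_initial} we have \(\Ass(\calJ_{K_2,G}^k)=\Ass(\calJ_{K_2,G})\), so \(\frakP_T(K_2,G)\) is an associated prime of the power. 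Hence \(\v(\calJ_{K_2,G}^k)\le\v(\calJ_{K_2,G})+2(k-1)\).

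\textbf{Lower bound.} Fix \(T\in\calC(G)\) and a homogeneous \(h\) with \((\calJ_{K_2,G}^k:h)=\frakP_T(K_2,G)\). The goal is \(\deg(h)\ge \v_T(\calJ_{K_2,G})+2(k-1)\), after which we minimize over \(T\).

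\begin{enumerate}[1]
\item By \Cref{lem:change_initial}, assume \(\ini(h)\notin\ini(\calJ_{K_2,G}^k)=(\ini(\calJ_{K_2,G}))^k\), and use
\[
\ini(h)\,\ini(\frakP_T(K_2,G))\subseteq(\ini\calJ_{K_2,G})^k.
\]
\item Rerun the case analysis of \Cref{powers1}. For \(T\neq\emptyset\), pick \(v\in T\); then \(x_{1,v}\ini(h)\in(\ini\calJ_{K_2,G})^k\) while \(\ini(h)\) is not. Since the generators \(x_{1,u}x_{2,w}\) (\(\{u,w\}\in E(G)\)) divisible by \(x_{1,v}\) are \(x_{1,v}x_{2,w}\) with \(w\in N_G(v)\), \(w>v\), this forces
\[
\ini(h)=x_{2,w}\,\ini(f_{e_1})\cdots\ini(f_{e_{k-1}})\,h'
\]
for some edges \(e_1,\dots,e_{k-1}\).
\item Strip off the factor \(\ini(f_{e_1})\cdots\ini(f_{e_{k-1}})\) of degree \(2(k-1)\). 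The remaining monomial \(x_{2,w}h'\) should then lie in
\[
(\ini\calJ_{K_2,G}:\ini\frakP_T(K_2,G))\setminus\ini\calJ_{K_2,G}.
\]
\item Prove the analogue of \Cref{lem:initial_works_for_path} for closed CM graphs: any monomial in that set has degree at least \(\v_T(\calJ_{K_2,G})\). This follows by repeating, at the monomial level, the argument of \Cref{thm:local_v_number_non_CM_closed} and \Cref{prop:local_v_number_path}. That argument only used colon conditions against \(x_{k,b_{j_i}}\) and against \([1,2|b_l-1,b_l+1]\), both of which have monomial counterparts in \(\ini\frakP_T(K_2,G)\).
\item Treat \(T=\emptyset\) as in \Cref{powers1}, using the spine vertices. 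Here \(\v_\emptyset(\calJ_{K_2,G})=t-1\) by \Cref{lem:minimal_CDS}, and the small exceptional case \(t=2\) is checked by a direct colon computation analogous to the \(n=3\) computation in \Cref{powers1}.
\end{enumerate}

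The main obstacle is step 3. For a path, \(\ini\calJ_{K_2,G}\) is a complete intersection, and that is what let \Cref{powers1} cancel \(\ini(f_{e_1})\cdots\ini(f_{e_{k-1}})\) cleanly via \((I^k:I\text{-product})=I\). For general \(G\), \(\ini\calJ_{K_2,G}\) is not a complete intersection, so this cancellation is not available as stated.

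First attempt: show \((\ini\calJ_{K_2,G})^k:u=\ini\calJ_{K_2,G}\) whenever \(u\) is a product of \(k-1\) minimal generators. The idea is that \(\ini\calJ_{K_2,G}\) is the edge ideal of a bipartite graph, whose powers are normally torsion-free, and that should give the required colon behaviour.

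Fallback: restrict every estimate to the variables \(x_{1,b_i},x_{2,b_i}\) on the spine. Since the bound in step 4 is detected only by spine vertices and their immediate neighbours, the path argument of \Cref{powers1} and \Cref{lem:initial_works_for_path} can be imported almost verbatim, with \(\v_T\) supplied by \Cref{prop:local_v_number_path}.
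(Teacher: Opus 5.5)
Your upper bound is correct and is the paper's argument. The lower bound has a genuine gap at your step 3, and your ``first attempt'' to close it is false. If $G$ is not a path, some maximal clique contains vertices $a<b<c$. Put $I=\ini(\calJ_{K_2,G})$ and take the generator $u=x_{1,a}x_{2,c}$. Then $x_{1,b}x_{2,b}\,u=(x_{1,a}x_{2,b})(x_{1,b}x_{2,c})\in I^2$, but $x_{1,b}x_{2,b}\notin I$, since every generator has the form $x_{1,p}x_{2,q}$ with $p<q$. Hence $(I^2:u)\neq I$.

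Normal torsion-freeness of bipartite edge ideals only gives $I^k=I^{(k)}$; it says nothing about colons by products of generators. Here the bipartite graph of $I$ contains the induced path $x_{1,b},x_{2,c},x_{1,a},x_{2,b}$, and that path is exactly what breaks the cancellation. So after stripping $\ini(f_{e_1})\cdots\ini(f_{e_{k-1}})$ from $\ini(h)$, you have no reason to expect $x_{2,w}h'\in(\ini\calJ_{K_2,G}:\ini\frakP_T(K_2,G))$, and your step 4 (which is fine on its own) never gets applied.

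Your fallback points in the paper's direction, but as written it lacks the mechanism. Literally restricting to spine variables loses degree. The paper instead uses a degree-preserving monomial \emph{collapse} $\varphi$ onto $\KK[x_{i,b_j}]$. It is the identity on spine variables and sends $x_{1,v}\mapsto x_{1,b_j}$, $x_{2,v}\mapsto x_{2,b_{j+1}}$ for $b_j<v<b_{j+1}$. Every generator $x_{1,p}x_{2,q}$ with $p<q$ in $[b_j,b_{j+1}]$ goes to $x_{1,b_j}x_{2,b_{j+1}}$. Hence $\varphi$ maps $(\ini\calJ_{K_2,G})^k$ onto $(\ini\calJ_{K_2,P})^k$ and $\ini\frakP_T(K_2,G)$ onto $\ini\frakP_T(K_2,P)$, and on the spine the complete-intersection cancellation of \Cref{powers1} is available.

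Two further points are missing from your plan.
\begin{enumerate}[1]
\item $T$ need not be a cut set of $P$. One must pass to a minimal prime $\frakP_{T'}(K_2,P)\subseteq\frakP_T(K_2,P)$, and the method then yields only $\deg(h)\ge\v_{T'}(\calJ_{K_2,P})+2(k-1)$. This can fall strictly below your local target: for maximal cliques $[1,3],[3,5],[5,7]$ and $T=\{3,5\}$ one has $\v_T(\calJ_{K_2,G})=4$ but $\v_{T'}(\calJ_{K_2,P})=2$. The theorem is recovered only through $\v(\calJ_{K_2,P})=\v(\calJ_{K_2,G})$, since both equal $\ceil{\frac{2(t-1)}{3}}$ by \Cref{CMclsoedgraph}. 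So aim for the global bound, not $\v_T(\calJ_{K_2,G})+2(k-1)$ for every $T$.
\item The collapse does not preserve the normalization $\ini(h)\notin(\ini\calJ_{K_2,G})^k$ from \Cref{lem:change_initial}. For $b_j<y<z<b_{j+1}$, the non-generator $x_{1,z}x_{2,y}$ is sent to the generator $x_{1,b_j}x_{2,b_{j+1}}$. The path argument depends on that normalization: it produces the factorization $x_{2,v+1}\ini(f_{e_1})\cdots\ini(f_{e_{k-1}})h'$, and \Cref{lem:initial_works_for_path} requires it. So importing that argument ``verbatim'' needs extra justification for the case $\varphi(\ini(h))\in(\ini\calJ_{K_2,P})^k$.
\end{enumerate}
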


\begin{proof}
    Let \(f\) be a homogeneous polynomial in \(S\) such that \((\calJ_{K_2, G}:f)=\frakP_T(K_2,G)\) for some \(T\in \calC(G)\) and \(\deg(f)=\v(\calJ_{K_2, G})\).  By \Cref{lem:power_colon_g}, we have \((\calJ_{K_2, G}^k:g^{k-1}f)=\frakP_T(K_2,G)\) for \(g\coloneqq f_{\{1,2\}}\).
    Since \(\Ass(\calJ_{K_2, G}^k)=\Ass(\calJ_{K_2, G})\), it follows that \(\v_{T}(\calJ_{K_2, G}^k)\le \deg(g^{k-1}f)=\v(\calJ_{K_2, G})+2(k-1)\).  In particular, \(\v(\calJ_{K_2, G}^k)\le \v(\calJ_{K_2, G})+2(k-1)\).

    It remains to show that \(\v(\calJ_{K_2, G}^k)\ge \v(\calJ_{K_2, G})+2(k-1)\).  To this end, let \(\varphi\) be the \(\KK\)-algebra homomorphism from \(S=\KK[x_{1,1},\dots,x_{1,n},x_{2,1},\dots,x_{2,n}]\) onto its subring
    \[
        R=\KK[x_{1,b_0},x_{1,b_1},\dots,x_{1,b_t},x_{2,b_0},x_{2,b_1},\dots,x_{2,b_t}],
    \]
    defined by
    \begin{itemize}
        \item the restriction of \(\varphi\) to \(R\) is the identity map;
        \item if \(b_j<v<b_{j+1}\), then \(\varphi(x_{1,v})=x_{1,b_j}\) and \(\varphi(x_{2,v})=x_{2,b_{j+1}}\).
    \end{itemize}
    It is straightforward to check that \(\varphi(\calJ_{K_2, G})=\calJ_{K_2, P}\), where \(P\) is the spine of \(G\) with vertex set \(V(P)=\{b_0,b_1,\dots,b_t\}\).  Note that \(\calC(P)\subseteq \calC(G) \subseteq \{b_1,b_2,\dots,b_{t-1}\}\).  Since \(T\in \calC(G)\), then \(\varphi(\frakP_T(K_2,G))=\frakP_T(K_2,P)\).  Notice that if \(T\in \calC(G)\setminus\calC(P)\), then \(\frakP_T(K_2,P)\) is a non-minimal prime ideal containing \(\calJ_{K_2,P}\).

    Now let \(h\) be a homogeneous polynomial in \(S\) such that \((\calJ_{K_2, G}^k:h)=\frakP_T(K_2,G)\).  By \Cref{lem:change_initial}, we may assume that \(\ini(h)\notin \ini(\calJ_{K_2, G}^k)\).  Since \(\ini(h)\ini(\frakP_T(K_2,G))\subseteq \ini(\calJ_{K_2, G}^k)\), we obtain
    \[
        \varphi(\ini(h))\varphi(\ini(\frakP_T(K_2,G)))\subseteq \varphi(\ini(\calJ_{K_2, G}^k)),
    \]
    that is,
    \[
        \varphi(\ini(h))\ini(\frakP_T(K_2,P))\subseteq \ini(\calJ_{K_2, P}^k).
    \]
    Let \(\frakP_{T'}(K_2,P)\) be a minimal prime of \(\calJ_{K_2, P}\) contained in
    \(\frakP_T(P)\).  Then
    \[
        \varphi(\ini(h))\ini(\frakP_{T'}(K_2,P))\subseteq \ini(\calJ_{K_2,P}^k).
    \]
    By the proof of \Cref{powers1}, this implies that
    \begin{align*}
        \deg(h)
        &= \deg(\ini(h)) = \deg(\varphi(\ini(h))) 
        \stackrel{\text{\eqref{eqn:deg_h}}}{\ge} \v_{T'}(\calJ_{K_2, P})+2(k-1) \\
        &\ge \v(\calJ_{K_2,P})+2(k-1) = \v(\calJ_{K_2, G})+2(k-1),
    \end{align*}
    where the last equality holds because the \(\v\)-numbers of \(\calJ_{K_2, P}\) and \(\calJ_{K_2, G}\) depend only on the number of their maximal cliques by \Cref{CMclsoedgraph}.
\end{proof}

The following observation clarifies the behavior of local \(\v\)-numbers under the reduction to the spine subgraph.

\begin{Remark}
    If \(T \in \calC(P) \subseteq \calC(G)\), then \(T' = T\) holds in the proof of \Cref{powers2}.
    Consequently, for every positive integer \(k\),
    \[
        \v_T(\calJ_{K_2,G}^k) = \v_T(\calJ_{K_2,G})+2(k - 1).
    \]
\end{Remark}

As an immediate consequence, we obtain an explicit formula for path graphs, which are the simplest examples of Cohen--Macaulay closed graphs.

\begin{Corollary}
    If \(G=P_n\) is a path graph with  $n$ vertices, then for any integer $k\ge 1$,
    \[
        \v (\calJ_{K_2, G}^k) = \left\lceil\frac{2(n-2)}{3}\right\rceil+2(k-1).
    \]
\end{Corollary}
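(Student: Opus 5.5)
This should follow by combining \Cref{powers2} with \Cref{CMclsoedgraph}, the only task being to match the parameter $t$ to $n$. The path $P_n$, labeled $1,2,\dots,n$ along the path, is a connected closed graph. Its maximal cliques are the edges $F_i=[i,i+1]$ for $i\in[n-1]$. Hence it is Cohen--Macaulay closed in the sense of \cite[Theorem 3.1]{MR2863365}, with $a_i=i$ and $t=n-1$ maximal cliques; equivalently, $S/\calJ_{K_2,P_n}$ is a complete intersection, since the initial ideal $(x_{1,i}x_{2,i+1}\mid i\in[n-1])$ is generated by a regular sequence of monomials.

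First, I would apply \Cref{CMclsoedgraph} with $t=n-1$ to get
\[
\v(\calJ_{K_2,P_n})=\ceil{\frac{2((n-1)-1)}{3}}=\ceil{\frac{2(n-2)}{3}}.
\]
Next, I would apply \Cref{powers2} to the connected Cohen--Macaulay closed graph $G=P_n$, which gives $\v(\calJ_{K_2,P_n}^k)=\v(\calJ_{K_2,P_n})+2(k-1)$ for every $k\ge 1$. Substituting the first display yields the claimed formula.

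The only point needing care is degenerate cases. For $n=2$ the graph is $K_2$ and the ideal is prime, so $\ceil{0}=0$ is correct for $k=1$. For higher powers one must confirm that \Cref{powers2} still applies; there $g=f_{\{1,2\}}$ generates the ideal, so $\v(\calJ^k)=2(k-1)$, which is consistent with the formula. For $n\ge3$ everything is covered by the cited results, so I expect no real obstacle beyond checking that $t=n-1$ is used rather than $t=n$.
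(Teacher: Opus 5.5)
Your proposal is correct and is essentially the paper's proof. It identifies $P_n$ as a connected Cohen--Macaulay closed graph with $t=n-1$ maximal cliques, reads off $\v(\calJ_{K_2,P_n})=\ceil{\frac{2(n-2)}{3}}$ from \Cref{CMclsoedgraph}, and then applies \Cref{powers2}. Your separate check of $n=2$, where the ideal is the principal prime $(g)$ and so $\v(\calJ^k)=2(k-1)$, is a harmless addition that covers a case the paper does not treat explicitly, since \Cref{powers1} assumes $n\ge 3$.
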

\begin{proof}
    Note that \(G\) is a connected Cohen--Macaulay closed graph with precisely  \(n-1\) maximal cliques, $\v (\calJ_{K_2, G}) = \left\lceil\frac{2(n-2)}{3}\right\rceil$ by Corollary \ref{CMclsoedgraph}. The result follows from  Proposition  \ref{powers2}.
\end{proof}

We conclude this section with a remark that indicates possible extensions of our methods to complete graphs \(K_m\) with \(m\ge 3\).

\begin{Remark}
    Let \(G\) be a path graph and let \(T\) be a cut set of \(G\).
    \begin{enumerate}[a]
        \item For any integer \(m \ge 2\), the method used in the proof of \Cref{lem:power_colon_g} can be adapted to show that
            \[
                \v_{T}(\calJ_{K_m,G}^k) \le \v_{T}(\calJ_{K_m,G}) + 2(k-1).
            \]
            However, equality need not hold in general. For instance, if \(G = P_5\) and \(T = \{3\}\), a direct computation yields
            \[
                \v(\calJ_{K_3,G}) = \v_{T}(\calJ_{K_3,G}) = 2
            \]
            and
            \[
                \v(\calJ_{K_3,G}^2) = \v_{T}(\calJ_{K_3,G}^2) = 3.
            \]
        \item Nevertheless, the modules
            \[
                (\calJ_{K_m,G}^k:\frakP_T(K_m,G)) \big/ \calJ_{K_m,G}^k
            \]
            and
            \[
                \bigl((\ini(\calJ_{K_m,G}))^k:\ini(\frakP_T(K_m,G))\bigr) \big/ (\ini(\calJ_{K_m,G}))^k
            \]
            seem to be closely related. It would be worthwhile to investigate further consequences of this relation.
    \end{enumerate}
\end{Remark}

\begin{acknowledgment*}
    The authors are grateful to the software system \texttt{Macaulay2} \cite{M2}, for serving as excellent sources of inspiration.  This work is supported by the Natural Science Foundation of Jiangsu Province (No.~BK20221353). In addition, the first author acknowledges partial support from ``the Fundamental Research Funds for Central Universities'' and ``Quantum Science and Technology - National Science and Technology Major Project'' (2021ZD0302902). And the second author is  partially supported by  the National Natural Science Foundation of China (No. 12471246).
\end{acknowledgment*}

\medskip
\hspace{-6mm} {\bf Data availability statement}

\vspace{3mm}
\hspace{-6mm}  The data used to support the findings of this study are included within the article.

\medskip
\hspace{-6mm} {\bf Conflict of interest}

\vspace{3mm}
\hspace{-6mm}  The authors declare that they have no competing interests.

\bibliography{references}

@Article{MR4792768,
  author   = {Jaramillo-Velez, Delio and Seccia, Lisa},
  journal  = {Collect. Math.},
  title    = {Connected domination in graphs and $\textup{v}$-numbers of binomial edge ideals},
  year     = {2024},
  issn     = {0010-0757,2038-4815},
  pages    = {771--793},
  volume   = {75},
  doi      = {10.1007/s13348-023-00412-w},
  fjournal = {Collectanea Mathematica},
  mrclass  = {05E40 (05C69 13A70 13P10)},
  mrnumber = {4792768},
  url      = {https://doi.org/10.1007/s13348-023-00412-w},
}

@article {MR3290687,
    AUTHOR = {Ene, Viviana and Herzog, J\"{u}rgen and Hibi, Takayuki and Qureshi, Ayesha Asloob},
     TITLE = {The binomial edge ideal of a pair of graphs},
   JOURNAL = {Nagoya Math. J.},
  FJOURNAL = {Nagoya Mathematical Journal},
    VOLUME = {213},
      YEAR = {2014},
     PAGES = {105--125},
      ISSN = {0027-7630},
   MRCLASS = {13F20 (05C25 13P10)},
  MRNUMBER = {3290687},
MRREVIEWER = {Monica La Barbiera},
       DOI = {10.1215/00277630-2389872},
       URL = {https://doi.org/10.1215/00277630-2389872},
}

@book {MR1251956,
    AUTHOR = {Bruns, Winfried and Herzog, J\"urgen},
     TITLE = {Cohen-{M}acaulay rings},
    SERIES = {Cambridge Studies in Advanced Mathematics},
    VOLUME = {39},
 PUBLISHER = {Cambridge University Press, Cambridge},
      YEAR = {1993},
     PAGES = {xii+403},
      ISBN = {0-521-41068-1},
   MRCLASS = {13H10 (13-02)},
  MRNUMBER = {1251956},
MRREVIEWER = {Matthew\ Miller},
}

@article {MR2669070,
    AUTHOR = {Herzog, J\"{u}rgen and Hibi, Takayuki and Hreinsd\'{o}ttir, Freyja and Kahle, Thomas and Rauh, Johannes},
     TITLE = {Binomial edge ideals and conditional independence statements},
   JOURNAL = {Adv. in Appl. Math.},
  FJOURNAL = {Advances in Applied Mathematics},
    VOLUME = {45},
      YEAR = {2010},
     PAGES = {317--333},
      ISSN = {0196-8858},
   MRCLASS = {13P10 (05C25 13F20)},
  MRNUMBER = {2669070},
MRREVIEWER = {Seth Sullivant},
       DOI = {10.1016/j.aam.2010.01.003},
       URL = {https://doi.org/10.1016/j.aam.2010.01.003},
}

@article {MR2782571,
    AUTHOR = {Ohtani, Masahiro},
     TITLE = {Graphs and ideals generated by some 2-minors},
   JOURNAL = {Comm. Algebra},
  FJOURNAL = {Communications in Algebra},
    VOLUME = {39},
      YEAR = {2011},
     PAGES = {905--917},
      ISSN = {0092-7872},
   MRCLASS = {13C40 (13P10)},
  MRNUMBER = {2782571},
MRREVIEWER = {Marcel Morales},
       DOI = {10.1080/00927870903527584},
       URL = {https://doi.org/10.1080/00927870903527584},
}

@article {MR3011436,
    AUTHOR = {Rauh, Johannes},
     TITLE = {Generalized binomial edge ideals},
   JOURNAL = {Adv. in Appl. Math.},
  FJOURNAL = {Advances in Applied Mathematics},
    VOLUME = {50},
      YEAR = {2013},
     PAGES = {409--414},
      ISSN = {0196-8858},
   MRCLASS = {13F20 (05C25 13P10)},
  MRNUMBER = {3011436},
MRREVIEWER = {Siamak Yassemi},
       DOI = {10.1016/j.aam.2012.08.009},
       URL = {https://doi.org/10.1016/j.aam.2012.08.009},
}

@Article{MR4869330,
  author   = {Dey, Deblina and Jayanthan, A. V. and Saha, Kamalesh},
  journal  = {Internat. J. Algebra Comput.},
  title    = {On the $\textup{v}$-number of binomial edge ideals of some classes of graphs},
  year     = {2025},
  issn     = {0218-1967,1793-6500},
  pages    = {119--143},
  volume   = {35},
  doi      = {10.1142/S0218196724500607},
  fjournal = {International Journal of Algebra and Computation},
  mrclass  = {13F65 (05C69 05E40 13F55)},
  mrnumber = {4869330},
  url      = {https://doi.org/10.1142/S0218196724500607},
}

@Article{MR4834446,
  author     = {Ambhore, Siddhi Balu and Saha, Kamalesh and Sengupta, Indranath},
  journal    = {Acta Math. Vietnam.},
  title      = {The $\textup{v}$-number of binomial edge ideals},
  year       = {2024},
  issn       = {0251-4184,2315-4144},
  pages      = {611--628},
  volume     = {49},
  doi        = {10.1007/s40306-024-00540-w},
  fjournal   = {Acta Mathematica Vietnamica},
  mrclass    = {13F20 (05E40 13F65)},
  mrnumber   = {4834446},
  mrreviewer = {Aryampilly\ V.\ Jayanthan},
  url        = {https://doi.org/10.1007/s40306-024-00540-w},
}

@Article{MR2863365,
  author     = {Ene, Viviana and Herzog, J\"urgen and Hibi, Takayuki},
  journal    = {Nagoya Math. J.},
  title      = {Cohen--{M}acaulay binomial edge ideals},
  year       = {2011},
  issn       = {0027-7630,2152-6842},
  pages      = {57--68},
  volume     = {204},
  doi        = {10.1215/00277630-1431831},
  fjournal   = {Nagoya Mathematical Journal},
  mrclass    = {13F20 (05C25 05E40 13C14 13D02)},
  mrnumber   = {2863365},
  mrreviewer = {Adam\ L.\ Van Tuyl},
  url        = {https://doi.org/10.1215/00277630-1431831},
}

@Article{MR4423525,
  author     = {Bolognini, Davide and Macchia, Antonio and Strazzanti, Francesco},
  journal    = {J. Algebraic Combin.},
  title      = {Cohen--{M}acaulay binomial edge ideals and accessible graphs},
  year       = {2022},
  issn       = {0925-9899,1572-9192},
  pages      = {1139--1170},
  volume     = {55},
  doi        = {10.1007/s10801-021-01088-w},
  fjournal   = {Journal of Algebraic Combinatorics. An International Journal},
  mrclass    = {13H10 (05E40 13C05)},
  mrnumber   = {4423525},
  url        = {https://doi.org/10.1007/s10801-021-01088-w},
}

@InCollection{MR4143239,
  author     = {Ene, Viviana and Herzog, J\"urgen},
  booktitle  = {Combinatorial structures in algebra and geometry},
  publisher  = {Springer, Cham},
  title      = {On the symbolic powers of binomial edge ideals},
  year       = {2020},
  isbn       = {978-3-030-52111-0; 978-3-030-52110-3},
  pages      = {43--50},
  series     = {Springer Proc. Math. Stat.},
  volume     = {331},
  doi        = {10.1007/978-3-030-52111-0\_4},
  mrclass    = {13P10 (05E40 13C15)},
  mrnumber   = {4143239},
  mrreviewer = {Nguyen Cong Minh},
  url        = {https://doi.org/10.1007/978-3-030-52111-0_4},
}

@article {MR4011111,
    AUTHOR = {Cooper, Susan M. and Seceleanu, Alexandra and Toh\u{a}neanu, \c{S}tefan O. and Pinto, Maria Vaz and Villarreal, Rafael H.},
     TITLE = {Generalized minimum distance functions and algebraic
              invariants of {G}eramita ideals},
   JOURNAL = {Adv. in Appl. Math.},
  FJOURNAL = {Advances in Applied Mathematics},
    VOLUME = {112},
      YEAR = {2020},
     PAGES = {101940, 34},
      ISSN = {0196-8858,1090-2074},
   MRCLASS = {13P25 (11T71 13C40 14G50 94B27)},
  MRNUMBER = {4011111},
MRREVIEWER = {C\'icero\ Carvalho},
       DOI = {10.1016/j.aam.2019.101940},
       URL = {https://doi.org/10.1016/j.aam.2019.101940},
}

@article {MR3169597,
    AUTHOR = {Mohammadi, Fatemeh and Sharifan, Leila},
     TITLE = {Hilbert function of binomial edge ideals},
   JOURNAL = {Comm. Algebra},
  FJOURNAL = {Communications in Algebra},
    VOLUME = {42},
      YEAR = {2014},
     PAGES = {688--703},
      ISSN = {0092-7872,1532-4125},
   MRCLASS = {13F20 (05E40 13C15 13D40)},
  MRNUMBER = {3169597},
MRREVIEWER = {Lucia\ Maria\ Marino},
       DOI = {10.1080/00927872.2012.721037},
       URL = {https://doi.org/10.1080/00927872.2012.721037},
}

@article {MR4984037,
    AUTHOR = {Kumar, Arvind and Pomeroy, Joshua and Tran, Le},
     TITLE = {Binomial edge ideals of crown graphs},
   JOURNAL = {J. Algebraic Combin.},
  FJOURNAL = {Journal of Algebraic Combinatorics. An International Journal},
    VOLUME = {62},
      YEAR = {2025},
     PAGES = {51},
      ISSN = {0925-9899,1572-9192},
   MRCLASS = {13C10 (05E40 13C15 13F20)},
  MRNUMBER = {4984037},
       DOI = {10.1007/s10801-025-01464-w},
       URL = {https://doi.org/10.1007/s10801-025-01464-w},
}

@article {MR4741232,
    AUTHOR = {Conca, Aldo},
     TITLE = {A note on the $\textup{v}$-invariant},
   JOURNAL = {Proc. Amer. Math. Soc.},
  FJOURNAL = {Proceedings of the American Mathematical Society},
    VOLUME = {152},
      YEAR = {2024},
     PAGES = {2349--2351},
      ISSN = {0002-9939,1088-6826},
   MRCLASS = {13A30},
  MRNUMBER = {4741232},
MRREVIEWER = {Tony\ J.\ Puthenpurakal},
       DOI = {10.1090/proc/16767},
       URL = {https://doi.org/10.1090/proc/16767},
}

@article {arXiv:2306.14243,
    AUTHOR = {Ficarra, Antonino and Sgroi, Emanuele},
     TITLE = {Asymptotic behaviour of the $\textup{v}$-number of homogeneous ideals},
   eprint = {arXiv:2306.14243},
  MRCLASS = {13E05},
       URL = {https://arxiv.org/pdf/2306.14243},
}

@article {arXiv:2507.02161,
    AUTHOR = {Emiliano Liwski},
     TITLE = {The $\V$-Number of Binomial Edge Ideals: Minimal Cuts and Cycle Graphs},
   eprint = {arXiv:2507.02161},
  year = {2025},
       URL = {https://arxiv.org/pdf/2507.02161},
}

@article {MR4139109,
    AUTHOR = {Jaramillo, Delio and Villarreal, Rafael H.},
     TITLE = {The $\textup{v}$-number of edge ideals},
   JOURNAL = {J. Combin. Theory Ser. A},
  FJOURNAL = {Journal of Combinatorial Theory. Series A},
    VOLUME = {177},
      YEAR = {2021},
     PAGES = {Paper No. 105310, 35},
      ISSN = {0097-3165,1096-0899},
   MRCLASS = {05E40 (05C65 13A15)},
  MRNUMBER = {4139109},
MRREVIEWER = {Margherita\ Barile},
       DOI = {10.1016/j.jcta.2020.105310},
       URL = {https://doi.org/10.1016/j.jcta.2020.105310},
}

@Article{MR4942694,
  author   = {Kataoka, Tatsuya and Muta, Yuji and Terai, Naoki},
  journal  = {J. Algebra},
  title    = {The $\textup{v}$-numbers of {S}tanley--{R}eisner ideals from the viewpoint of {A}lexander dual complexes},
  year     = {2025},
  issn     = {0021-8693,1090-266X},
  pages    = {589--611},
  volume   = {684},
  doi      = {10.1016/j.jalgebra.2025.07.017},
  fjournal = {Journal of Algebra},
  mrclass  = {13F55 (13H10)},
  mrnumber = {4942694},
  url      = {https://doi.org/10.1016/j.jalgebra.2025.07.017},
}

@Article{MR4756096,
  author     = {Saha, Kamalesh},
  journal    = {Int. Math. Res. Not. IMRN},
  title      = {The $\textup{v}$-number and {C}astelnuovo--{M}umford regularity of cover ideals of graphs},
  year       = {2024},
  issn       = {1073-7928,1687-0247},
  pages      = {9010--9019},
  doi        = {10.1093/imrn/rnad277},
  fjournal   = {International Mathematics Research Notices. IMRN},
  mrclass    = {13A70 (05E40)},
  mrnumber   = {4756096},
  mrreviewer = {Huy\ T\`ai\ H\`a},
  url        = {https://doi.org/10.1093/imrn/rnad277},
}

@Article{MR4447411,
  author     = {Ene, Viviana and Rinaldo, Giancarlo and Terai, Naoki},
  journal    = {Res. Math. Sci.},
  title      = {Sequentially {C}ohen--{M}acaulay binomial edge ideals of closed graphs},
  year       = {2022},
  issn       = {2522-0144,2197-9847},
  pages      = {Paper No. 39, 17},
  volume     = {9},
  doi        = {10.1007/s40687-022-00334-2},
  fjournal   = {Research in the Mathematical Sciences},
  mrclass    = {13H10 (05E40 13C70)},
  mrnumber   = {4447411},
  mrreviewer = {Jorge\ Neves},
  url        = {https://doi.org/10.1007/s40687-022-00334-2},
}

@article {MR1711319,
    AUTHOR = {Cutkosky, S. Dale and Herzog, J{\"u}rgen and Ng{\^o} Vi{\^e}t Trung},
     TITLE = {Asymptotic behaviour of the {C}astelnuovo-{M}umford
              regularity},
   JOURNAL = {Compositio Math.},
  FJOURNAL = {Compositio Mathematica},
    VOLUME = {118},
      YEAR = {1999},
     PAGES = {243--261},
      ISSN = {0010-437X,1570-5846},
   MRCLASS = {13D45 (13A30 13C99 14F17)},
  MRNUMBER = {1711319},
MRREVIEWER = {Vincenzo\ Di Gennaro},
       DOI = {10.1023/A:1001559912258},
       URL = {https://doi.org/10.1023/A:1001559912258},
}

@Article{MR1621961,
  author     = {Kodiyalam, Vijay},
  journal    = {Proc. Amer. Math. Soc.},
  title      = {Asymptotic behaviour of {C}astelnuovo--{M}umford regularity},
  year       = {2000},
  issn       = {0002-9939,1088-6826},
  pages      = {407--411},
  volume     = {128},
  doi        = {10.1090/S0002-9939-99-05020-0},
  fjournal   = {Proceedings of the American Mathematical Society},
  mrclass    = {13D45 (14B15)},
  mrnumber   = {1621961},
  mrreviewer = {P.\ Schenzel},
  url        = {https://doi.org/10.1090/S0002-9939-99-05020-0},
}

@article {MR4491066,
    AUTHOR = {Saha, Kamalesh and Sengupta, Indranath},
     TITLE = {The $\textup{v}$-number of monomial ideals},
   JOURNAL = {J. Algebraic Combin.},
  FJOURNAL = {Journal of Algebraic Combinatorics. An International Journal},
    VOLUME = {56},
      YEAR = {2022},
     PAGES = {903--927},
      ISSN = {0925-9899,1572-9192},
   MRCLASS = {13F55 (05C70 05E40 13A15 13A70)},
  MRNUMBER = {4491066},
MRREVIEWER = {Somayeh\ Bandari},
       DOI = {10.1007/s10801-022-01137-y},
       URL = {https://doi.org/10.1007/s10801-022-01137-y},
}

@article {MR4950319,
    AUTHOR = {Biswas, Prativa and Mandal, Mousumi},
     TITLE = {A study of $\textup{v}$-number for some monomial ideals},
   JOURNAL = {Collect. Math.},
  FJOURNAL = {Collectanea Mathematica},
    VOLUME = {76},
      YEAR = {2025},
     PAGES = {667--682},
      ISSN = {0010-0757,2038-4815},
   MRCLASS = {05E40 (05C38 05C69 13F20 13F55)},
  MRNUMBER = {4950319},
       DOI = {10.1007/s13348-024-00451-x},
       URL = {https://doi.org/10.1007/s13348-024-00451-x},
}

@Misc{M2,
    author = {Grayson, Daniel R. and Stillman, Michael E.},
    title = {Macaulay2, a software system for research in algebraic geometry},
    howpublished = {Available at \url{https://math.uiuc.edu/Macaulay2/}},
    eprint = {Available at \url{https://math.uiuc.edu/Macaulay2/}},
    note = {Available at \url{https://math.uiuc.edu/Macaulay2/}},
}

@article {MR4932665,
    AUTHOR = {Ficarra, Antonino and Macias Marques, Pedro},
     TITLE = {The {$\rm v$}-function of powers of sums of ideals},
   JOURNAL = {J. Algebraic Combin.},
  FJOURNAL = {Journal of Algebraic Combinatorics. An International Journal},
    VOLUME = {62},
      YEAR = {2025},
     PAGES = {Paper No. 13, 21},
      ISSN = {0925-9899,1572-9192},
   MRCLASS = {13F20 (05C70 05E40 13F55)},
  MRNUMBER = {4932665},
}

@article {MR4950310,
    AUTHOR = {Ficarra, Antonino},
     TITLE = {Simon conjecture and the {$\rm v$}-number of monomial
              ideals},
   JOURNAL = {Collect. Math.},
  FJOURNAL = {Collectanea Mathematica},
    VOLUME = {76},
      YEAR = {2025},
     PAGES = {477--492},
      ISSN = {0010-0757,2038-4815},
   MRCLASS = {13F20 (05C70 05E40 13F55)},
  MRNUMBER = {4950310},
}
\end{document}